\declaretheorem[numberwithin=section]{theorem}
\declaretheorem[sibling=theorem]{lemma}
\declaretheorem[sibling=theorem]{proposition}
\declaretheorem[sibling=theorem]{corollary}
\declaretheorem[style=definition,numbered=no]{definition}
\declaretheorem[style=remark,sibling=theorem]{example}
\declaretheorem[style=remark,sibling=theorem]{remark}
\declaretheorem[style=remark,numbered=no]{notation}
\numberwithin{equation}{section}
\numberwithin{figure}{section}
\title{Limits and colimits of crossed groups}
\date{\today}
\author{Jun Yoshida}
\begin{document}
\maketitle

\tableofcontents

\section*{Introduction}
\label{sec:intro}
\addcontentsline{toc}{section}{Introduction}

For a small category $\mathcal A$, a \emph{crossed $\mathcal A$-group} $G$ is a family $\{G(a)\}_{a\in\mathcal A}$ of groups equipped with
\begin{itemize}
  \item an $\mathcal A$-set structure $G(a)\times\mathcal A(b,a)\to G(b)$; and
  \item a left action $G(a)\times\mathcal A(b,a)\to\mathcal A(b,a)$
\end{itemize}
which are so compatible as if they form a double category.
The notion was originally introduced in the simplicial case $\mathcal A=\Delta$; namely, Fiedorowicz and Loday \cite{FL91} (and independently Krasauskas \cite{Kra87} with the different name \emph{skew-simplicial groups}) studied \emph{crossed simplicial groups} to understand the cyclic homology $HC_\ast(A)$ for associative algebras from a categorical viewpoint.
They pointed out that it is a special case of the homology $HG_\ast(A)$ associated with crossed simplicial groups $G$.
We here sketch the construction: first, it turns out that one can associate each crossed simplicial group $G$ with a category $\Delta_G$ called the \emph{total category} of $G$, which is endowed with a bijective-on-object functor $\Delta\to\Delta_G$.
For each natural number $n\in\mathbb N$, one also has $\operatorname{Aut}_{\Delta_G}([n])\cong G_n$.
Hence, roughly speaking, a functor $X:\Delta_G^\opposite\to\mathcal C$ is a kind of a simplicial object with \emph{$G$-symmetry}.
In particular, if $\mathcal C$ is the category $\mathbf{Mod}_k$ of modules over a commutative ring $k$, a group $HG_n(X)\in\mathbf{Mod}_k$ is 
\[
HG_n(X) := \operatorname{Tor}^{\Delta_G}_k(k,X)\ ,
\]
where the $\operatorname{Tor}^{\Delta G}_\ast$ is the $\operatorname{Tor}_\ast$ taken in the functor category $\mathbf{Mod}^{\Delta_G^\opposite}_k$.
Each algebras over $k$ gives rise to a functor $A^\otimes:\Delta_G\to\mathbf{Mod}_k$ with $A^\otimes_n\cong A^{\otimes n+1}$, so we obtain the \emph{$G$-homology group} $HG_n(A)$ for algebras.
For example, take $G=\Lambda$ the crossed simplicial set with each level set $\Lambda_n$ being the cyclic group of order $n+1$.
Then, the category $\Delta_\Lambda$ is \emph{Connes' cyclic category} \cite{Connes83} so functors $\Delta_\Lambda^\opposite\to\mathcal C$ are usually called \emph{cyclic objects} in $\mathcal C$.
The resulting homology group $HG_n(A)$ coincides with the cyclic homology group of $A$.

Note that, in \cite{FL91}, it is also pointed out that the geometric realization $|G|$ of a crossed simplicial group $G$ is canonically a topological group or sometimes even a Lie group.
Hence, crossed simplicial groups are themselves geometrically interesting.
Dyckerhoff and Kapranov \cite{DyckerhoffKapranov2015} studied them in this point of view.

Another kind of crossed groups were studied by Batanin and Markl \cite{BataninMarkl2014}; the \emph{crossed interval groups}.
We denote by $\nabla$ the category of totally ordered set with distinct maximum and minimum elements and order-preserving maps respecting them, which is sometimes called the \emph{category of intervals}.
A crossed interval group is a crossed $\nabla$-group.
They introduced the notion to investigate symmetries on the Hochschild cohomology in view of operads.
A point is the following observation: some of the structures on (symmetric) operads are controlled by the crossed interval group $\mathfrak S$ of the symmetric groups.
This evokes the idea of \emph{categories of operators} by May and Thomason \cite{MayThomason1978}, which suggests we see operads as a sort of fibrations of the form $\mathcal O^\otimes\to\mathbf{Fin}_\ast$, here $\mathbf{Fin}_\ast$ is the category of pointed finite sets and pointed maps.
Note that Lurie's definition of \emph{$\infty$-operads} \cite{Lur14} is actually based on it.
On the other hand, a planar (or non-symmetric) operad is essentially a fibration over $\nabla$; forgetting symmetries is realized as the pullback along the functor $\nabla\to\mathbf{Fin}_\ast$ collapsing $\min$ and $\max$.
Actually, the total category $\nabla_{\mathfrak S}$ lives between $\nabla$ and $\mathbf{Fin}_\ast$.
This suggests we consider operads symmetric over general crossed interval groups and classify them.

In both cases above, we can say that a crossed $\mathcal A$-group presents an intrinsic symmetry on $\mathcal A$ so that it provides a symmetric version of the notion of the ``theory $\mathcal A$;'' in the simplicial abelian case, we get the theory of $G$-symmetric homologies (cf. Dold-Kan correspondence \cite{Dold1958} between simplicial abelian groups and chain complexes) while, in the interval case, we would get the notion of $G$-symmetric operads.
Hence, the study of crossed $\mathcal A$-groups is not irrelevant to that of $\mathcal A$ itself.
In particular, the terminal crossed $\mathcal A$-group may have all the information on the intrinsic symmetry on $\mathcal A$.
Furthermore, a classification of crossed $\mathcal A$-subgroups of it will also classifies the symmetries on $\mathcal A$.
For example, according to \cite{FL91}, the terminal crossed simplicial group, say $\mathfrak W_\Delta$, consists of the hyperoctahedral groups $\mathfrak H_n:=\mathfrak S_n\ltimes(\mathbb Z/2\mathbb Z)^{\times n}$, which enables the notion of \emph{parities} as well as permutations.
The complete list of crossed simplicial subgroups of $\mathfrak W_\Delta$ is found in \cite{FL91} as well as in the appendix of this paper.

We denote by $\mathbf{CrsGrp}_{\mathcal A}$ the category of crossed interval groups, and the problem is to find the terminal object.
Unfortunately, for some reason, there were few works on the category itself, so we only have poor tools to do that.
Hence, in this paper, we aim to achieve the first step making a basic and comprehensive understanding on it, especially from a categorical viewpoint.
More precisely, we prove local presentability of $\mathbf{CrsGrp}_{\mathcal A}$ in two different ways: by the \emph{cardinality argument} and by the \emph{monadicity}.

In \cref{sec:cocompl}, we will achieve it in the first way; namely, we see there is a cardinal $\kappa$ so that only $\kappa$ many elements can involve in the operations of crossed $\mathcal A$-groups simultaneously.
In particular, it guarantees the existence of the terminal crossed $\mathcal A$-group $\mathfrak T_{\mathcal A}$.
This is a standard procedure to verify local presentability, it does not provide any constructive way of computations.
Although we give direct computations of some limits and colimits, the explicit form of $\mathfrak T_{\mathcal A}$ is still unknown at this stage.

That is why we discuss it individually in \cref{sec:terminal} where we compute $\mathfrak T_{\mathcal A}$ for a sort of small categories $\mathcal A$ satisfying certain special properties.
For example, we will obtain explicit descriptions in the simplicial and interval cases, so the computation recovers the result of \cite{FL91}.
Note that, if one obtains an explicit form of $\mathfrak T_{\mathcal A}$, it leads to a classification result of crossed $\mathcal A$-groups.
Indeed, we can index crossed $\mathcal A$-group $G$ with the image of the unique map $G\to\mathfrak T_{\mathcal A}$.
In particular, we will obtain the classification of crossed interval groups in this way, which Batanin and Markl concerned about in their paper \cite{BataninMarkl2014}.
The detail is put in Appendix \ref{sec:clsfy-crsint}, where we will prove a crossed analogue of \emph{Goursat's Lemma}.

On the other hand, in \cref{sec:monobj}, we exhibits crossed $\mathcal A$-groups as monoid objects in the slice category $\mathbf{Set}^{/\mathfrak T_{\mathcal A}}_{\mathcal A}$ over the category of $\mathcal A$-sets.
We introduce a monoidal structure on the category and prove it is monoidally closed.
As a result, the category of monoid objects, which are called \emph{crossed $\mathcal A$-monoids}, is monadic over the presheaf topos, so it is locally presentable.
It will be shown that the category of crossed $\mathcal A$-groups is both reflective and coreflective subcategory; like monoids v.s. groups.

Another important problem is the base change along functors, which we will discuss in \cref{sec:basechange}.
To avoid certain difficulties, we will not treat with arbitrary functors but only with faithful functors.
In particular, we are interested in the functors $\Delta\hookrightarrow\widetilde\Delta\to\nabla$, which adds $\max$ and $\min$ to each finite linearly ordered set.
We will see they induce adjunctions
\[
\vcenter{
  \xymatrix{
    \mathbf{CrsGrp}_\Delta \ar@/^.4pc/[]+R+(0,2);[r]+L+(0,2) \ar@/_.4pc/[]+R+(0,-2);[r]+L+(0,-2) & \mathbf{CrsGrp}_{\widetilde\Delta} \ar[l]^-\perp_-\perp & \mathbf{CrsGrp}_{\widetilde\Delta}^{/\mathfrak H\times C_2} \ar@/^.4pc/[]+R+(0,2);[r]+L+(0,2) \ar@/_.4pc/[]+R+(0,-2);[r]+L+(0,-2) & \mathbf{CrsGrp}_\nabla \ar[l]^-\perp_-\perp }}\ .
\]
using Kan extensions and \emph{Adjoint Lifting Theorem}.
This useful theorem enables us to compute left adjoint functors explicitly as soon as we have left Kan extensions.
The author expects that these basechange functors may explain how to relate symmetries available on homologies and those on monoidal categories where homologies lie in.

\subsection*{Acknowledgement}

I would like first to thank my supervisor Prof. Toshitake Kohno for the encouragement and a lot of kind support.
I would also like to mention people at Macquarie University since some important parts of the work was performed there.
Especially, I express my special gratitude to Prof. Ross Street who gave me a lot of great ideas and advice through discussions.
I would like to thank Michael Batanin and Martin Markl for the kind and polite response to my questions.
Also, I appreciate my friends Genki Sato.
He taught me many ZFC stuffs and checked my proof of local presentability.
This work was supported by the Program for Leading Graduate Schools, MEXT, Japan.
This work was supported by JSPS KAKENHI Grant Number JP15J07641.

\section{Definition and examples}
\label{sec:def}

In this first section, we recall the basic definitions and examples of crossed groups.

\begin{notation}
If $\mathcal A$ is a small category, we denote by $\mathbf{Set}_{\mathcal A}$ the category of $\mathcal A$-sets; i.e. presheaves over $\mathcal A$.
For each object $a\in\mathcal A$, we will write $\mathcal A[a]\in\mathbf{Set}_{\mathcal A}$ the presheaf represented by $a$.
\end{notation}

\begin{definition}
Let $\mathcal A$ be a small category.
Then, a \emph{crossed $\mathcal A$-group} is an $\mathcal A$-set $G\in\mathbf{Set}_{\mathcal A}$ together with data
\begin{itemize}
  \item a group structure on $G(a)$ for each $a\in\mathcal A$, and
  \item a left action $G(b)\times\mathcal A(a,b)\to\mathcal A(a,b);\ (x,\varphi)\mapsto \varphi^x$ of the group $G(b)$ on $\mathcal A(a,b)$ for each $a,b\in\mathcal A$;
\end{itemize}
which satisfy the following conditions:
\begin{enumerate}[label={\rm(CG\roman*)},leftmargin=\widthof{\indent(CG0)}]
  \item\label{condCG:gdist} for $\varphi:a\to b$ and $\psi:b\to c$ in $\mathcal A$, and for $x\in G(c)$, we have
\[
(\psi\varphi)^x = \psi^x \varphi^{\psi^\ast(x)}\ ;
\]
  \item\label{condCG:cdist} for $\varphi:a\to b$ and $x,y\in G(b)$, we have
\[
\varphi^\ast(xy) = (\varphi^y)^\ast(x)\varphi^\ast(y)\ .
\]
\end{enumerate}
\end{definition}

We can describe two conditions \ref{condCG:gdist} and \ref{condCG:cdist} more categorically: consider a map
\begin{equation}
\label{eq:def-crs}
\mathrm{crs}:G(b)\times\mathcal A(a,b)\to\mathcal A(a,b)\times G(a)
\ ;\quad (x,\varphi)\mapsto (\varphi^x,\varphi^\ast(x))\ .
\end{equation}
Then, \ref{condCG:gdist} and \ref{condCG:cdist} are respectively equivalent to the commutativities of the diagrams
\begin{equation}
\label{eq:gdist-com}
\vcenter{
  \xymatrix{
    G(c)\times\mathcal A(b,c)\times\mathcal A(a,b) \ar[d]_{\mathrm{id}\times\mathrm{comp}} \ar[r]^{\vec{\mathrm{crs}}} & \mathcal A(b,c)\times \mathcal A(a,b)\times G(a) \ar[d]^{\mathrm{comp}\times\mathrm{id}} \\
    G(c)\times\mathcal A(a,c) \ar[r]^{\mathrm{crs}} & \mathcal A(a,c)\times G(a) }}
\end{equation}
and
\begin{equation}
\label{eq:cdist-com}
\vcenter{
  \xymatrix{
    G(c)\times G(c)\times\mathcal A(b,c) \ar[d]_{\mathrm{mul}\times\mathrm{id}} \ar[r]^{\vec{\mathrm{crs}}} & \mathcal A(b,c)\times G(b)\times G(b) \ar[d]^{\mathrm{id}\times\mathrm{mul}} \\
    G(c)\times\mathcal A(b,c) \ar[r]^{\mathrm{crs}} & \mathcal A(b,c)\times G(b) }}
\end{equation}
where $\vec{\mathrm{crs}}$ indicates appropriate compositions of the map $\mathrm{crs}$.

\begin{lemma}
\label{lem:crsgrp-ptd}
Let $\mathcal A$ be a small category, and let $G$ be a crossed $\mathcal A$-group.
\begin{enumerate}[label={\rm(\arabic*)}]
  \item\label{req:crsgrp-ptd:id} For each $a\in\mathcal A$, the action of $G(a)$ on $\mathcal A(a,a)$ preserves the identity morphism; i.e. $\mathrm{id}^x=\mathrm{id}$ for any $x\in G(a)$.
  \item\label{req:crsgrp-ptd:mono}
For each $a,b\in\mathcal A$, the action of $G(b)$ on $\mathcal A(a,b)$ preserves monomorphisms and split epimorphisms.
  \item\label{req:crsgrp-ptd:unit} For every morphism $\varphi:a\to b\in\mathcal A$, the map $\varphi^\ast:G(b)\to G(a)$ preserves the units of the groups.
Moreover, if $\varphi$ is $G(b)$-invariant, then $\varphi^\ast$ is a group homomorphism.
\end{enumerate}
\end{lemma}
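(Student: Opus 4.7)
The plan is to derive each item directly from \ref{condCG:gdist} and \ref{condCG:cdist}, leveraging the fact that $G$ is an $\mathcal A$-set (so $\mathrm{id}_a^{\ast}=\mathrm{id}_{G(a)}$) and that each $G(b)$ acts on the hom-set $\mathcal A(a,b)$ by bijections.

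For \ref{req:crsgrp-ptd:id}, I will specialize \ref{condCG:gdist} by taking $\psi=\mathrm{id}_b$ and $\varphi\colon a\to b$ arbitrary. Since $\mathrm{id}_b^{\ast}$ is the identity of $G(b)$, the relation collapses to $\varphi^{x}=\mathrm{id}_b^{x}\,\varphi^{x}$. Because $x\in G(b)$ acts bijectively on $\mathcal A(a,b)$, as $\varphi$ ranges over $\mathcal A(a,b)$ so does $\varphi^{x}$; hence $\mathrm{id}_b^{x}$ acts as a left identity on every morphism into $b$. Setting $a=b$ and $\varphi=\mathrm{id}_a$ then yields $\mathrm{id}_a^{x}=\mathrm{id}_a$.

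For \ref{req:crsgrp-ptd:mono}, given a mono $\varphi\colon a\to b$ and $\alpha_1,\alpha_2\colon c\to a$ with $\varphi^{x}\alpha_1=\varphi^{x}\alpha_2$, I will apply the $x^{-1}$-action to both sides and expand via \ref{condCG:gdist}. Since $(\varphi^{x})^{x^{-1}}=\varphi^{\,e}=\varphi$ and the scalar $y=(\varphi^{x})^{\ast}(x^{-1})\in G(a)$ does not depend on $\alpha_i$, this produces $\varphi\,\alpha_1^{y}=\varphi\,\alpha_2^{y}$; monicity of $\varphi$ and bijectivity of the $y$-action then force $\alpha_1=\alpha_2$. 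For split epis, applying \ref{condCG:gdist} to a section $\varphi s=\mathrm{id}_b$ gives $\varphi^{x}\,s^{\varphi^{\ast}(x)}=\mathrm{id}_b^{x}$, which equals $\mathrm{id}_b$ by \ref{req:crsgrp-ptd:id}, so $s^{\varphi^{\ast}(x)}$ sections $\varphi^{x}$.

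For \ref{req:crsgrp-ptd:unit}, substituting $x=y=e$ into \ref{condCG:cdist} and using $\varphi^{e}=\varphi$ produces $\varphi^{\ast}(e)=\varphi^{\ast}(e)\,\varphi^{\ast}(e)$, whence $\varphi^{\ast}(e)=e$ by cancellation in $G(a)$. When $\varphi$ is $G(b)$-invariant, \ref{condCG:cdist} reduces immediately to $\varphi^{\ast}(xy)=\varphi^{\ast}(x)\,\varphi^{\ast}(y)$, making $\varphi^{\ast}$ a homomorphism. The main obstacle will be \ref{req:crsgrp-ptd:id}: the key observation is that \ref{condCG:gdist} with $\psi=\mathrm{id}_b$, combined with bijectivity of the $G(b)$-action on hom-sets, already pins $\mathrm{id}_b^{x}$ to the genuine identity; once this is available, \ref{req:crsgrp-ptd:mono} reduces to transport along the $x^{-1}$-action and \ref{req:crsgrp-ptd:unit} is a one-line application of \ref{condCG:cdist}.
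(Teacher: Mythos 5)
Your proposal is correct and follows essentially the same route as the paper: parts \ref{req:crsgrp-ptd:id} and \ref{req:crsgrp-ptd:unit} are the intended direct specializations of \ref{condCG:gdist} and \ref{condCG:cdist}, and your mono argument (transporting the equation $\varphi^x\alpha_1=\varphi^x\alpha_2$ back along the $x^{-1}$-action via \ref{condCG:gdist}) is just a rearrangement of the paper's computation $\delta^x\varphi_i=(\delta\varphi_i^{\delta^\ast(x)^{-1}})^x$, with the split-epi case handled identically.
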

\begin{proof}
The assertions \ref{req:crsgrp-ptd:id} and \ref{req:crsgrp-ptd:unit} easily follow from the conditions \ref{condCG:gdist} and \ref{condCG:cdist} respectively.
It remains to show \ref{req:crsgrp-ptd:mono}.
It immediately follows from \ref{condCG:gdist} that the action of $G(a)$ on $\mathcal A(a,b)$ preserves split epimorphisms.
To see it also preserves monomorphisms, take an arbitrary monomorphism $\delta:a\to b$ in $\mathcal A$, and let $x\in G(b)$.
Given two morphisms $\varphi_1,\varphi_2:c\to a$, suppose $\delta^x\varphi_1=\delta^x\varphi_2$.
By the condition \ref{condCG:gdist}, we have
\[
\delta^x\varphi_i
=\delta^x(\varphi_i^{\delta^\ast(x)^{-1}})^{\delta^\ast(x)}
=(\delta\varphi_i^{\delta^\ast(x)^{-1}})^x
\]
for $i=1,2$.
Since $\delta$ is a monomorphism, we get $\varphi_1=\varphi_2$.
This implies $\delta^x$ is a monomorphism, and we obtain \ref{req:crsgrp-ptd:mono}.
\end{proof}

\begin{corollary}
\label{cor:crsgrp-term}
Let $\mathcal A$ be a small category.
Then, if $t\in\mathcal A$ is a terminal object, for every crossed $\mathcal A$-group $G$, and for each object $a\in\mathcal A$, the unique map $a\to t$ induces a group homomorphism $G(t)\to G(a)$.
Dually, if $s\in\mathcal A$ is an initial object, the unique map $s\to a$ induces a group homomorphism $G(a)\to G(s)$.
\end{corollary}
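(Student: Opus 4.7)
The plan is to apply part \ref{req:crsgrp-ptd:unit} of \cref{lem:crsgrp-ptd}, which says that whenever a morphism $\varphi:a\to b$ is $G(b)$-invariant, the induced map $\varphi^\ast:G(b)\to G(a)$ is a group homomorphism. So the task reduces to verifying $G(t)$-invariance of the unique map $a\to t$.

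For the terminal case, let $\varphi:a\to t$ be the unique morphism determined by the terminality of $t$. For any $x\in G(t)$, the element $\varphi^x\in\mathcal A(a,t)$ is again a morphism $a\to t$; but the hom-set $\mathcal A(a,t)$ is a singleton, so $\varphi^x=\varphi$. Thus $\varphi$ is $G(t)$-invariant, and \cref{lem:crsgrp-ptd}\ref{req:crsgrp-ptd:unit} yields that $\varphi^\ast:G(t)\to G(a)$ is a group homomorphism. The initial case is dual: if $s\in\mathcal A$ is initial and $\varphi:s\to a$ is the unique morphism, then for every $x\in G(a)$ the element $\varphi^x$ lies in the singleton $\mathcal A(s,a)$ and hence equals $\varphi$, so once again $\varphi^\ast:G(a)\to G(s)$ is a group homomorphism by the same lemma.

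There is essentially no obstacle here; the entire content lies in noticing that universality of $t$ (resp.\ $s$) forces the relevant hom-set to be a singleton, making invariance automatic. The argument is really a direct corollary of \cref{lem:crsgrp-ptd}\ref{req:crsgrp-ptd:unit} rather than a new computation.
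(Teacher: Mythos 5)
Your proof is correct and is exactly the argument the paper intends: terminality (resp.\ initiality) makes the relevant hom-set a singleton, so the unique morphism is automatically invariant, and \cref{lem:crsgrp-ptd}\ref{req:crsgrp-ptd:unit} gives the group homomorphism.
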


\begin{corollary}
\label{cor:noncrs-grp}
Let $\mathcal A$ be a small category, and let $G$ be a crossed $\mathcal A$-group.
Then, if the action $G(a)$ on $\mathcal A(b,a)$ is trivial for each $a,b\in\mathcal A$, then $G$ is $\mathcal A$-group; i.e. a group object in the category $\mathbf{Set}_{\mathcal A}$.
\end{corollary}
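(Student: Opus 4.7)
The statement reduces to showing that, under triviality of the action, the $\mathcal A$-set structure $\varphi^\ast:G(b)\to G(a)$ is in fact a group homomorphism for every $\varphi:a\to b$ in $\mathcal A$; once this is in hand, the presheaf $G\in\mathbf{Set}_{\mathcal A}$ lifts canonically to a presheaf of groups, and such a lift is nothing other than a group object in the presheaf topos $\mathbf{Set}_{\mathcal A}$.

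The plan is therefore to invoke \cref{lem:crsgrp-ptd}\ref{req:crsgrp-ptd:unit} directly. Its second clause asserts that whenever $\varphi:a\to b$ is $G(b)$-invariant, the map $\varphi^\ast:G(b)\to G(a)$ is a group homomorphism. Under our hypothesis that the action $G(b)\times\mathcal A(a,b)\to\mathcal A(a,b)$ is trivial for all $a,b\in\mathcal A$, every morphism $\varphi:a\to b$ is tautologically $G(b)$-invariant, so the conclusion applies to every $\varphi$. Alternatively, one can read this straight off condition \ref{condCG:cdist}: the identity $\varphi^\ast(xy)=(\varphi^y)^\ast(x)\,\varphi^\ast(y)$ collapses, in the presence of $\varphi^y=\varphi$, to $\varphi^\ast(xy)=\varphi^\ast(x)\varphi^\ast(y)$, and unit preservation is supplied by the first half of \cref{lem:crsgrp-ptd}\ref{req:crsgrp-ptd:unit}.

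Finally, one should record the identification of a presheaf of groups with a group object in $\mathbf{Set}_{\mathcal A}$: since products in $\mathbf{Set}_{\mathcal A}$ are computed objectwise, the multiplication, unit, and inversion morphisms in $\mathbf{Set}_{\mathcal A}$ are exactly families of the corresponding set-theoretic maps $G(a)\times G(a)\to G(a)$, $1\to G(a)$, and $G(a)\to G(a)$ that are natural in $a$, and naturality of these maps is equivalent to each $\varphi^\ast$ being a group homomorphism. There is no substantive obstacle here; the only point requiring care is verifying that unit preservation is available without an extra hypothesis, which is indeed the content of the first half of \cref{lem:crsgrp-ptd}\ref{req:crsgrp-ptd:unit}.
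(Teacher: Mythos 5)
Your argument is correct and is exactly the route the paper intends: the corollary is deduced immediately from \cref{lem:crsgrp-ptd}\ref{req:crsgrp-ptd:unit}, since triviality of the action makes every morphism $G(b)$-invariant, so each $\varphi^\ast$ is a group homomorphism and $G$ becomes a presheaf of groups, i.e.\ a group object in $\mathbf{Set}_{\mathcal A}$. No gaps; your closing remark identifying objectwise group structures natural in $a$ with group objects in the presheaf topos is the standard identification the paper takes for granted.
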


\begin{remark}
The converse of \cref{cor:noncrs-grp} holds: every $\mathcal A$-group can be seen as a crossed $\mathcal A$-group with the trivial actions on each $\mathcal A(a,b)$.
To distinguish such a crossed group from general crossed groups, we often say it is \emph{non-crossed}.
If we write $\mathbf{Grp}_{\mathcal A}$ the category of (non-crossed) $\mathcal A$-groups, there is a fully faithful embedding
\[
\mathbf{Grp}_{\mathcal A}\to\mathbf{CrsGrp}_{\mathcal A}\ .
\]
\end{remark}

\begin{definition}
Let $\mathcal A$ be a small category, and let $G$ and $H$ be crossed $\mathcal A$-groups.
Then, a map $G\to H$ of crossed $\mathcal A$-groups is a map of $\mathcal A$-sets which is a degreewise group homomorphism respecting the actions on $\mathcal A(a,b)$ for each $a,b\in\mathcal A$.
\end{definition}

Clearly, crossed $\mathcal A$-groups and maps of them form a category, which we will denote by $\mathbf{CrsGrp}_{\mathcal A}$.
Then, the following result is obvious.

\begin{proposition}
\label{prop:crsgrp-initial}
For every small category $\mathcal A$, the category $\mathbf{CrsGrp}_{\mathcal A}$ admits an initial object; namely, the terminal $\mathcal A$-set $\ast$ with the unique crossed group structure.
\end{proposition}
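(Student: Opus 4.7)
The plan is to verify that the terminal $\mathcal A$-set $\ast$ carries a (unique) crossed $\mathcal A$-group structure and then show that any crossed $\mathcal A$-group $G$ admits a unique morphism from it. Since the values $\ast(a)=\{\ast\}$ are singletons, every required piece of data—the group structure on each $\ast(a)$ and the action $\ast(b)\times\mathcal A(a,b)\to\mathcal A(a,b)$—is uniquely determined (the trivial group structure, the trivial action), and the compatibility axioms \ref{condCG:gdist} and \ref{condCG:cdist} are vacuous because all terms in them are forced.

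Next, I would fix a crossed $\mathcal A$-group $G$ and show there is a unique morphism $u\colon \ast\to G$ in $\mathbf{CrsGrp}_{\mathcal A}$. Since $u_a\colon\ast(a)\to G(a)$ must be a group homomorphism from the trivial group, it is forced to send the unique element of $\ast(a)$ to the unit $e_a\in G(a)$; this already establishes uniqueness. It remains to check that the assignment $a\mapsto e_a$ does assemble into a valid morphism of crossed $\mathcal A$-groups, i.e.\ that it is a map of $\mathcal A$-sets respecting the actions.

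For naturality in $\mathcal A$, given $\varphi\colon a\to b$ one needs $\varphi^\ast(e_b)=e_a$; this is precisely part \ref{req:crsgrp-ptd:unit} of \cref{lem:crsgrp-ptd}. For compatibility with the action on hom-sets, one needs $\varphi^{u_b(\ast)}=\varphi^{e_b}$ to equal $\varphi$ for every $\varphi\in\mathcal A(a,b)$, which is immediate from the fact that $G(b)$ acts on $\mathcal A(a,b)$ as a group and hence its identity element acts as the identity map. This completes the verification.

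There is no real obstacle here; the only non-formal input is \cref{lem:crsgrp-ptd}\ref{req:crsgrp-ptd:unit}, which is exactly what is needed to make the candidate unit $a\mapsto e_a$ compatible with restriction along morphisms of $\mathcal A$. Once that lemma is in hand, the proof is essentially a matter of unwinding definitions.
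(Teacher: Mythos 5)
Your argument is correct and matches what the paper intends: the paper states this result without proof as ``obvious,'' and your verification is exactly the routine unwinding it has in mind. In particular, your use of \cref{lem:crsgrp-ptd}\ref{req:crsgrp-ptd:unit} to get $\varphi^\ast(e_b)=e_a$, together with the observation that the unit of $G(b)$ acts as the identity on $\mathcal A(a,b)$, is precisely the only non-trivial content needed.
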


Because of the compatibility condition in the definition of maps of crossed groups, the terminal $\mathcal A$-set $\ast$ is no longer terminal in the category $\mathbf{CrsGrp}_{\mathcal A}$ in general.
Nevertheless, for each crossed $\mathcal A$-group $G$, there still exists the unique $\mathcal A$-set map $G\to\ast$, and it makes sense to ask whether it is a map of crossed groups or not.
In this point of view, we can rephrase \cref{cor:noncrs-grp} as follows.

\begin{corollary}
\label{cor:noncrs-triv}
Let $\mathcal A$ be a small category.
Then, a crossed $\mathcal A$-group $G$ is a (non-crossed) $\mathcal A$-group if and only if the unique $\mathcal A$-map $G\to\ast$ is a map of crossed $\mathcal A$-groups.
Consequently, there is a canonical equivalence
\[
\mathbf{Grp}_{\mathcal A}
\simeq \mathbf{CrsGrp}^{/\ast}_{\mathcal A}
\]
of categories which respects the canonical functors into $\mathbf{CrsGrp}_{\mathcal A}$.
\end{corollary}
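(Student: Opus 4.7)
The plan is to unwind definitions and then reduce everything to \cref{cor:noncrs-grp} together with the remark immediately following it. First, I would analyze what it means for the unique $\mathcal A$-set map $G\to\ast$ to be a morphism of crossed $\mathcal A$-groups. The only structure on $\ast$ that matters here is that $\ast(b)$ is the trivial group acting trivially on each hom-set $\mathcal A(a,b)$. Compatibility of $G\to\ast$ with the actions therefore forces $\varphi^x=\varphi$ for every $x\in G(b)$ and every $\varphi\in\mathcal A(a,b)$; conversely, once all the actions of $G$ on $\mathcal A(a,-)$ are trivial, the remaining conditions (being a degreewise group homomorphism and being a map of $\mathcal A$-sets) are automatic for the unique map to $\ast$. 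Thus the unique $\mathcal A$-set map $G\to\ast$ is a map of crossed $\mathcal A$-groups if and only if $G(b)$ acts trivially on each $\mathcal A(a,b)$, which by \cref{cor:noncrs-grp} and the subsequent remark is exactly the condition for $G$ to be a (non-crossed) $\mathcal A$-group.

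For the equivalence, I would exhibit explicit mutually inverse functors. In one direction, a non-crossed $\mathcal A$-group $G$ is sent, via the fully faithful embedding $\mathbf{Grp}_{\mathcal A}\hookrightarrow\mathbf{CrsGrp}_{\mathcal A}$, to the same object equipped with its unique $\mathcal A$-set map to $\ast$, which by the first part is a morphism of crossed $\mathcal A$-groups. In the other direction, an object $(G,G\to\ast)$ of $\mathbf{CrsGrp}^{/\ast}_{\mathcal A}$ is sent to the underlying crossed $\mathcal A$-group $G$, which is non-crossed by the first part. For morphisms, the crucial observation is that the structure map to $\ast$ is necessarily the unique $\mathcal A$-set map, so the triangle over $\ast$ in the slice category commutes automatically; consequently the slice category is the full subcategory of $\mathbf{CrsGrp}_{\mathcal A}$ spanned by the crossed $\mathcal A$-groups with trivial actions, and under the embedding this agrees on the nose with $\mathbf{Grp}_{\mathcal A}$. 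The compatibility with the canonical functors into $\mathbf{CrsGrp}_{\mathcal A}$ is then immediate from the construction.

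I do not expect any serious obstacle: both assertions are essentially formal unwindings once \cref{cor:noncrs-grp} is available. The only point requiring mild care is verifying that the triangle condition in the slice category is vacuous because of the uniqueness of the underlying map $G\to\ast$ of $\mathcal A$-sets, which justifies identifying the slice with a full subcategory rather than merely a faithful one.
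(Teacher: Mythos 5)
Your proof is correct and follows exactly the route the paper intends: the paper states \cref{cor:noncrs-triv} as an immediate rephrasing of \cref{cor:noncrs-grp} and the remark following it, and your argument simply spells out that unwinding (triviality of the actions $\Leftrightarrow$ the unique map $G\to\ast$ respects them, plus the observation that the triangle condition in the slice is vacuous by uniqueness of the underlying $\mathcal A$-set map). No gaps; the only point deserving the care you already gave it is the identification of the slice with the full subcategory of non-crossed groups.
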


In \cref{sec:terminal}, we will see the category $\mathbf{CrsGrp}_{\mathcal A}$ also admits a terminal object, which is hard to compute in general.
In particular, it does not necessarily conincides with the initial object $\ast\in\mathbf{CrsGrp}_{\mathcal A}$, and this implies the category $\mathbf{CrsGrp}_{\mathcal A}$ is not pointed.
Nevertheless, we can consider the notion of \emph{images and kernels} of maps of crossed groups in an intuitive way.
Indeed, if $\varphi:G\to H$ is a map of crossed $\mathcal A$-groups, then it turns out that its degreewise image and kernel respectively form crossed $\mathcal A$-groups.
An important consequence of \cref{cor:noncrs-triv} is that the kernel of a map of crossed groups is always a (non-crosssed) $\mathcal A$-group.

In the case $\mathcal A$ is the simplex category $\Delta$, crossed $\Delta$-groups are usually called \emph{crossed simplicial groups}, and they were of central interest in previous works such as \cite{Kra87}, \cite{FL91}, and \cite{DyckerhoffKapranov2014}.
We recall the definition of $\Delta$; it is the category whose objects are the totally ordered sets
\[
[n] := \{0,\dots,n\}
\]
for $n\in\mathbb N$ and whose morphisms are order-preserving maps.
We, however, here introduce two other categories, which are more important than $\Delta$ throughout this paper.
Define a category $\widetilde\Delta$ to consists of the totally ordered sets
\[
\langle n\rangle := \{1,\dots,n\}
\]
for $n\in\mathbb N$ as objects and order-preserving maps as morphisms.
The third category $\nabla$ looks also similar but of a bit technically different form; the objects are totally ordered sets
\[
\double\langle n\double\rangle := \{-\infty,1,\dots,n,\infty\}
\]
for $n\in\mathbb N$, and the morphisms are order-preserving maps that respects $-\infty$ and $\infty$.
We call crossed $\widetilde\Delta$-groups and crossed $\nabla$-groups \emph{augmented crossed simplicial groups} and \emph{crossed interval groups} respectively; the latter is due to \cite{BataninMarkl2014}.
We have a canonical functors
\begin{equation}
\label{eq:deltainterval}
\xymatrix@R=0pt{
  \Delta \ar[r] & \widetilde\Delta \ar[r] & \nabla \\
  [k] \ar@{|->}[r] & \langle k+1\rangle,\ \langle l\rangle \ar@{|->}[r] & \double\langle l\double\rangle }\ .
\end{equation}
The first functor is fully faithful, and the last is faithful and bijective on objects.

\begin{remark}
The category $\nabla$ is sometimes called the \emph{category of intervals}.
As pointed out in the unpublished note \cite{Joyal97} by Joyal, we have an isomorphism
\[
\nabla\cong\Delta^\opposite\ .
\]
Although we do not use this fact in this paper, the reader should notice that the Segal condition on simplicial spaces can also be seen as a condition on functors $\nabla\to\mathbf{Spaces}$.
\end{remark}

Now, we are giving several examples below.
For this, it is convenient to use \emph{joins of ordered sets}: for two partially ordered sets $P$ and $Q$, we denote by $P\star Q$ their join.
For example, there is a unique isomorphism $\langle m\rangle\star\langle n\rangle\cong\langle m+n\rangle$ of ordered sets.
Note that the underlying set of $P\star Q$ is just $P\amalg Q$.

\begin{example}
\label{ex:sym-crs}
We define a crossed interval group $\mathfrak S$ as follows:
\begin{itemize}
  \item for each $n\in\mathbb N$, $\mathfrak S_n$ is the $n$-th permutation group, or the permutation group on the set $\double\langle n\double\rangle$ fixing $\pm\infty$;
  \item for $\varphi:\double\langle m\double\rangle\to\double\langle n\double\rangle\in\nabla$, we define $\varphi^\ast:\mathfrak S_n\to\mathfrak S_m$ as follows: for $\sigma\in\mathfrak S_n$, $\varphi^\ast(\sigma)$ is the permutation on $\double\langle m\double\rangle$ given as the composition
\[
\begin{multlined}
\langle m\rangle
\cong\varphi^{-1}\{-\infty\}\star\varphi^{-1}\{1\}\star\dots\star\varphi^{-1}\{n\}\star\varphi^{-1}\{\infty\} \\
\to\varphi^{-1}\{-\infty\}\star\varphi^{-1}\{\sigma^{-1}(1)\}\star\dots\star\varphi^{-1}\{\sigma^{-1}(n)\}\star\varphi^{-1}\{\infty\}
\cong\langle m\rangle\mathrlap{\ ,}
\end{multlined}
\]
where the first and the last maps are the unique order-preserving bijections;
  \item for $\sigma\in\mathfrak S_n$, the action on $\nabla(\double\langle m\double\rangle,\double\langle n\double\rangle)$ is given by
\[
\begin{multlined}
\varphi^\sigma:\double\langle m\double\rangle
\cong \varphi^{-1}\{-\infty\}\star\varphi^{-1}\{\sigma^{-1}(1)\}\star\dots\star\varphi^{-1}\{\sigma^{-1}(n)\}\star\varphi^{-1}\{\infty\}\\
\to \{-\infty\}\star\{1\}\dots\{n\}\star\{\infty\}
\cong \double\langle n\double\rangle\mathrlap{\ .}
\end{multlined}
\]
\end{itemize}
The conditions \ref{condCG:gdist} and \ref{condCG:cdist} are verified easily.
We also have similar constructions for braid groups, pure braid groups, and so on.
\end{example}

\begin{example}
\label{ex:cyc-crs}
For each natural number $n$, we denote by $C_n$ the cyclic group of order $n$, which is canonically embedded in the symmetric group $\mathfrak S_n$.
Although the subsets $C_n\subset\mathfrak S_n$ do not define an interval subset of $\mathfrak S$ defined in \cref{ex:sym-crs}, one can see that they actually form an augmented simplicial subset, which we denote by $\Lambda$, so $\Lambda_n=C_n$.
The augmented simplicial structure is explicitly described as follows: for $\mu:\langle m\rangle\to\langle n\rangle\in\widetilde\Delta$, $\mu^\ast:\Lambda_n\to\Lambda_m$ is given by
\[
\mu^\ast(\sigma)(i) \equiv i+\sigma(\mu(i))-\mu(i) \pmod n\ .
\]
Clearly $\Lambda$ is a crossed augmented simplicial group.
Note that the category $\Delta_\Lambda$ defined in \cref{prop:crsgrp-cat} is called the \emph{Connes' cycle category} after Connes' work \cite{Connes83} on the cyclic homology and sometimes denoted just by $\Lambda$ (see \cref{rem:crsgrp-cat-id}).
\end{example}

\begin{example}
\label{ex:crsgrp-intHypoct}
Recall that the wreath product of a group $G$ by $\mathfrak S_n$ is the group
\[
G\wr\mathfrak S_n := \mathfrak S_n\ltimes G^{\times n}
\]
whose underlying set is $\mathfrak S_n\times G^{\times n}$ with multiplication given by
\[
(\sigma;\vec x)(\tau;\vec y)
= (\sigma\tau;\tau^\ast(\vec x)\cdot \vec y)\ ,
\]
where, if $\vec x=(x_1,\dots,x_n)$ and $\vec y=(y_1,\dots,y_n)$
\[
\tau^\ast(\vec x)\cdot\vec y
:= (x_{\tau(1)}y_1,\dots,x_{\tau(n)}y_n)\ .
\]
In particular the case $G=C_2$ is the cyclic group of order $2$, $H_n:=C_2\wr\mathfrak S_n$ is the Weyl group of the root system $B_n$, which is called the \emph{$n$-th hyperoctahedral group}.

We claim that the family $\{H_n\}_n$ forms a crossed interval group $\mathfrak H$.
Set $\mathfrak H(\double\langle n\double\rangle):=H_n$, and for $\varphi:\double\langle m\double\rangle\to\double\langle n\double\rangle\in\nabla$, we define $\varphi^\ast:H_n\to H_m$ by
\[
\varphi^\ast(\sigma;\vec\varepsilon)
:= (\varphi^\ast(\sigma)\beta_\varphi(\varepsilon);\varphi^\ast(\vec\varepsilon))\ ,
\]
where
\begin{itemize}
  \item $\varphi^\ast(\sigma)$ is the permutation on $\double\langle m\double\rangle$ defined in \cref{ex:sym-crs};
  \item $\beta_\varphi(\vec\varepsilon)$ is the permutation given by
\[
\begin{multlined}
\double\langle m\double\rangle
\cong \varphi^{-1}\{-\infty\}\star\varphi^{-1}\{1\}\star\dots\star\varphi^{-1}\{n\}\star\varphi^{-1}\{\infty\} \\
\xrightarrow{\mathrm{id}\amalg\beta^{\varepsilon_1} \amalg\dots\amalg \beta^{\varepsilon_n}\amalg\mathrm{id}} \varphi^{-1}\{-\infty\}\star\varphi^{-1}\{1\}\star\dots\star\varphi^{-1}\{n\}\star\varphi^{-1}\{\infty\}\cong \double\langle m\double\rangle\mathrlap{\ ,}
\end{multlined}
\]
where each $\beta:\varphi^{-1}\{j\}\to\varphi^{-1}\{j\}$ is the order-reversing map;
  \item if $\vec\varepsilon=(\varepsilon_1,\dots,\varepsilon_n)$, then
\[
\varphi^\ast(\vec\varepsilon)=(\varepsilon_{\varphi(1)},\dots,\varepsilon_{\varphi(m)})
\]
with the assumption $\varepsilon_{\pm\infty}=1$.
\end{itemize}
This actually defines an interval set
\[
\mathfrak H:\nabla^\opposite\to\mathbf{Set}\ .
\]
It is moreover verified that $\mathfrak H$ together with the action of $H_n$ on $\nabla(\double\langle m\double\rangle,\double\langle n\double\rangle)$ through $H_n\to\mathfrak S_n$ is a crossed interval group, which is called \emph{Hyperoctahedral crossed interval group}.
\end{example}

\begin{example}
\label{ex:crsgrp-simpWeyl}
Let $G$ be a crossed $\mathcal A$-group.
If $\varphi:\mathcal A'\to\mathcal A$ is a faithful functor such that the image of the map
\[
\varphi:\mathcal A'(a,b)\to\mathcal A(F(a),F(b))
\]
is stable under $G(\varphi(b))$-action, then the restricted $\mathcal A$-set $\varphi^\ast(G)$ inherits a structure of crossed $\mathcal A'$-groups.
For example, it is verified that the canonical functors $\Delta\hookrightarrow\widetilde\Delta\hookrightarrow\nabla$ given in \eqref{eq:deltainterval} pull back the Hyperoctahedral crossed interval group $\mathfrak H$ to crossed groups on $\Delta$ and $\widetilde\Delta$ respectively.
We call both of them \emph{Hyperoctahedral crossed simplicial groups} and also denote by $\mathfrak H$ by abuse of notation.
This construction is discussed in more detail in \cref{sec:basechange}.
\end{example}

We finally mention that we can associate each crossed $\mathcal A$-group with a category that is an extension of $\mathcal A$.
More precisely, we have the following result, which is a direct consequence of the commutative squares \eqref{eq:gdist-com} and \eqref{eq:cdist-com} together with \cref{lem:crsgrp-ptd}.

\begin{proposition}
\label{prop:crsgrp-cat}
Let $\mathcal A$ be a small category.
Suppose we are given an $\mathcal A$-set $G$ together with a group structure on $G(a)$ and a left action on $\mathcal A(b,a)$ for each $a,b\in\mathcal A$.
Then, $G$ is a crossed $\mathcal A$-group if and only if the following data defines a category $\mathcal A_G$:
\begin{itemize}
  \item\textbf{object}: the same as $\mathcal A$;
  \item\textbf{morphism}: for $a,b\in\mathcal A$, $\mathcal A_G(a,b)=\mathcal A(a,b)\times G(a)$;
  \item\textbf{composition}: given by
\[
\begin{split}
\mathcal A(b,c)\times G(b)\times\mathcal A(a,b)\times G(a)
&\xrightarrow{\mathrm{id}\times\mathrm{crs}\times\mathrm{id}}\mathcal A(b,c)\times\mathcal A(a,b)\times G(a)\times G(a) \\
&\xrightarrow{\mathrm{comp}\times\mathrm{mul}}\mathcal A(a,c)\times G(a)\ ,
\end{split}
\]
\end{itemize}
where the map $\operatorname{crs}$ is one defined by \eqref{eq:def-crs}.
In this case, the canonical map $\mathcal A(a,b)\to\mathcal A_G(a,b)$ defines a functor which is faithful and bijective on objects.
\end{proposition}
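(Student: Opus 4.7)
The plan is to identify each categorical axiom on $\mathcal A_G$ with one of the crossed-group axioms or its consequences from \cref{lem:crsgrp-ptd}. Unwinding the definition, composition reads
\[
(\psi,y)\circ(\varphi,x) = \bigl(\psi\cdot\varphi^y,\ \varphi^\ast(y)\cdot x\bigr)\ .
\]
Before the main calculation I would observe that, since the $G(a)$-action on $\mathcal A(a,a)$ is a left action (so $e_a$ acts trivially) and $\mathrm{id}_a^\ast=\mathrm{id}_{G(a)}$, the only possible unit at $a\in\mathcal A$ is $(\mathrm{id}_a,e_a)$; hence both directions of the biconditional reduce to checking associativity and the two unit laws with these specific choices.

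The core step is the associativity calculation. For $(\chi,z)\colon c\to d$, $(\psi,y)\colon b\to c$, and $(\varphi,x)\colon a\to b$, I would expand both bracketings. Matching $\mathcal A$-components gives
\[
(\chi\psi^z)\cdot\varphi^{\psi^\ast(z)\cdot y} = \chi\cdot\bigl(\psi\varphi^y\bigr)^z\ ;
\]
using that $x\mapsto \varphi^x$ is a group action to rewrite $\varphi^{\psi^\ast(z)\cdot y} = (\varphi^y)^{\psi^\ast(z)}$, free variation of $\chi$ turns this into \ref{condCG:gdist} for the triple $\psi$, $\varphi^y$, $z$. Matching $G$-components similarly yields
\[
\varphi^\ast\bigl(\psi^\ast(z)\cdot y\bigr) = (\psi\varphi^y)^\ast(z)\cdot\varphi^\ast(y)\ ,
\]
and expanding $(\psi\varphi^y)^\ast = (\varphi^y)^\ast\psi^\ast$ via the $\mathcal A$-set functoriality turns this into \ref{condCG:cdist} for the pair $(\psi^\ast(z),y)$. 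Since $\chi,\psi,\varphi,z,y$ range freely, associativity of $\mathcal A_G$ is precisely the conjunction of \ref{condCG:gdist} and \ref{condCG:cdist}.

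The unit equations are then quick: the right-identity law reduces to $\mathrm{id}_a^y=\mathrm{id}_a$, i.e.\ \cref{lem:crsgrp-ptd}\ref{req:crsgrp-ptd:id}, and the left-identity law to $\varphi^\ast(e_b)=e_a$, i.e.\ \cref{lem:crsgrp-ptd}\ref{req:crsgrp-ptd:unit}; both are already known consequences of \ref{condCG:gdist} and \ref{condCG:cdist}, so the forward direction closes, and in the backward direction they follow simply from the existence of identities in $\mathcal A_G$. The canonical map $\mathcal A(a,b)\to\mathcal A_G(a,b)$ is then $\varphi\mapsto(\varphi,e_a)$, and composition-preservation $(\psi,e_b)\circ(\varphi,e_a)=(\psi\varphi,e_a)$ follows from the same two unit identities; faithfulness and bijectivity on objects are immediate. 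The only delicate point I anticipate is keeping the indexing of the $G$-coordinates straight when reducing to \ref{condCG:cdist}, where one must invoke $\mathcal A$-set functoriality in the form $(\psi\varphi^y)^\ast=(\varphi^y)^\ast\psi^\ast$ in the correct direction; the rest is routine bookkeeping.
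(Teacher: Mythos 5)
Your proposal is correct and follows essentially the same route as the paper, which treats the proposition as a direct consequence of the commutative-square reformulations \eqref{eq:gdist-com}, \eqref{eq:cdist-com} of \ref{condCG:gdist}, \ref{condCG:cdist} together with \cref{lem:crsgrp-ptd}; your element-level computation (associativity $\Leftrightarrow$ the two crossed-group axioms, unit laws $\Leftrightarrow$ \cref{lem:crsgrp-ptd}\ref{req:crsgrp-ptd:id} and \ref{req:crsgrp-ptd:unit}) just makes that sketch explicit. The specializations you use ($\chi=\mathrm{id}$, $y=e_b$, $\psi=\mathrm{id}_b$) correctly recover the axioms in full generality, so there is no gap.
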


\begin{remark}
\label{rem:crsgrp-cat-id}
If $\mathcal A$ has no non-trivial isomorphism, then for each crossed $\mathcal A$-group $G$, and for each $a\in\mathcal A$, we have
\[
G(a)=\operatorname{Aut}_{\mathcal A_G}(a)\ .
\]
Moreover, the whole crossed $\mathcal A$-group structure on $G$ can be recovered as follows: for $x\in G(a)=\operatorname{Aut}_{\mathcal A_G}(a)$ and for $f:b\to a\in\mathcal A$, the composition $xf:b\to a\in\mathcal A_G$ is uniquely represented by a pair $(g,y)$ with $g:b\to a\in\mathcal A$ and $y\in G(b)$.
Clearly, $g=f^x$ and $y=f^\ast(x)$.
In other words, the crossed $\mathcal A$-group structure on $G$ involves the unique factorization property of the category $\mathcal A_G$.
In the case $\mathcal A$ is the simplex category $\Delta$, formal statements and proofs will be found in \cite{FL91} (Proposition 1.7).
Note that, in these papers, crossed simplicial groups are defined as extensions of the category $\Delta$.
\end{remark}

\section{Cocompleteness and completeness}
\label{sec:cocompl}

We investigate elementary properties of the category $\mathbf{CrsGrp}_{\mathcal A}$ with regard to colimits and limits.
Throughout the section, we fix a small category $\mathcal A$.
Note that the problem is not as easy as in the case of usual algebraic categories over $\mathcal A$.
For example, as pointed out in the previous section, terminal objects in $\mathbf{CrsGrp}_{\mathcal A}$ are already highly non-trivial, and it shows that the forgetful functor $\mathbf{CrsGrp}_{\mathcal A}\to\mathbf{Set}_{\mathcal A}$ does not preserve limits.

We begin with colimits since the situation is somehow easier than limits.
For a small category $\mathcal A$, we denote by $\mathcal A_0$ the maximal discrete subcategory of $\mathcal A$; that is, the subcategory with the same objects and the identities.
Since crossed $\mathcal A$-groups are by definition degreewise groups, we have the forgetful functor $\mathbf{CrsGrp}_{\mathcal A}\to\mathbf{Grp}_{\mathcal A_0}$.

\begin{proposition}
\label{prop:crsgrp-colim}
The forgetful functor $\mathbf{CrsGrp}_{\mathcal A}\to\mathbf{Grp}_{\mathcal A_0}$ creates arbitrary small colimits.
Consequently, the category $\mathbf{CrsGrp}_{\mathcal A}$ is cocomplete.
\end{proposition}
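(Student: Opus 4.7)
The strategy rests on $\mathcal A_0$ being discrete, so that $\mathbf{Grp}_{\mathcal A_0}\cong\prod_{a\in\operatorname{ob}\mathcal A}\mathbf{Grp}$ has colimits computed componentwise. Given a diagram $G:\mathcal I\to\mathbf{CrsGrp}_{\mathcal A}$, I would take as the candidate colimit the family $L(a):=\operatorname{colim}_i G_i(a)$ computed in $\mathbf{Grp}$, with coprojections $\iota^a_i:G_i(a)\to L(a)$, and then equip $L$ with a crossed $\mathcal A$-group structure so that every $\iota_i:G_i\to L$ is a map of crossed groups. Uniqueness of such a lift is already forced: the images $\iota^b_i(G_i(b))$ generate $L(b)$ as a group, so any extension of the $G_i$-actions on $\mathcal A(a,b)$ to an $L(b)$-action is unique, and by \ref{condCG:cdist} any set map $\varphi^\ast:L(b)\to L(a)$ extending the family $\{\varphi^\ast_{G_i}\}_i$ is determined first on single generators and then on products.

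For existence, the action of $L(b)$ on each $\mathcal A(a,b)$ is the unique group homomorphism $L(b)\to\operatorname{Sym}(\mathcal A(a,b))$ arising, by the universal property of the colimit of groups, from the compatible cocone $\{G_i(b)\to\operatorname{Sym}(\mathcal A(a,b))\}_i$. The nontrivial part is constructing, for each $\varphi:a\to b$, a set map $\varphi^\ast:L(b)\to L(a)$ compatible with the $\varphi^\ast_{G_i}$'s. I would present $L(b)$ as the free product $\ast_i G_i(b)$ modulo the relations $\iota_j(f(x))=\iota_i(x)$ for each transition $f:G_i\to G_j$ in $\mathcal I$, set $\varphi^\ast(\iota^b_i(x)):=\iota^a_i(\varphi^\ast_{G_i}(x))$ on single letters, and extend to longer reduced words by iterating the identity $\varphi^\ast(g\,w)=(\varphi^{w})^\ast(g)\cdot\varphi^\ast(w)$ suggested by \ref{condCG:cdist}, where $\varphi^w\in\mathcal A(a,b)$ is computed via the $L(b)$-action already defined.

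The main technical obstacle is well-definedness of $\varphi^\ast$: its value must be independent of the chosen word representative. Compatibility with the relations $\iota_i(x)\iota_i(y)=\iota_i(xy)$ inside a single $G_i$ reduces to \ref{condCG:cdist} applied inside $G_i$; compatibility with the transition relations $\iota_j(f(x))=\iota_i(x)$ uses that each $f$ commutes with the respective $\varphi^\ast$'s (being a map of crossed groups); and associativity of the recursion (that $\varphi^\ast(g\,h\,w)$ may be computed by peeling off $g$ first or the block $gh$ as a unit) reduces, via the left-action identity $\varphi^{xy}=(\varphi^y)^x$, to one further application of \ref{condCG:cdist}. Once $\varphi^\ast$ is well defined, \ref{condCG:gdist} follows from its counterpart in each $G_i$ together with the universal property, and \ref{condCG:cdist} holds by construction. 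Finally, for any cocone $\{G_i\to H\}_i$ in $\mathbf{CrsGrp}_{\mathcal A}$, the underlying $\mathbf{Grp}_{\mathcal A_0}$-map $L\to H$ respects the $\mathcal A$-actions (by uniqueness in the universal property on each $L(b)$) and the maps $\varphi^\ast$ (by induction on word length, using \ref{condCG:cdist} in $H$), so it is automatically a map of crossed $\mathcal A$-groups and completes the verification of the universal property.
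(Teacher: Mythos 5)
Your proposal is correct and is essentially the paper's own argument: the paper likewise builds the colimit degreewise in groups and defines $\varphi^\ast$ on words by the same recursion forced by \ref{condCG:cdist}, checking invariance under the defining relations and uniqueness from the fact that the canonical images generate each colimit group. The only organizational difference is that the paper reduces to the initial object, pushouts, and filtered colimits and writes out the construction only for pushouts (degreewise amalgamated free products), whereas you treat an arbitrary small diagram at once via the free product modulo transition relations; the substance of the verification is the same.
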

\begin{proof}
Thanks to \cref{prop:crsgrp-initial}, $\mathbf{CrsGrp}_{\mathcal A}$ has an initial object which is still initial in $\mathbf{Grp}_{\mathcal A_0}$.
Thus, it suffices to show the functor $\mathbf{CrsGrp}_{\mathcal A}\to\mathbf{Grp}_{\mathcal A_0}$ creates both pushouts and filtered colimits.
The latter is obvious since only finitely many variables appear simultaneously in the axioms of crossed $\mathcal A$-groups.

As for pushouts, suppose we are given a span
\[
G_1\xleftarrow{f_1} H \xrightarrow{f_2} G_2
\]
in the category $\mathbf{CrsGrp}_{\mathcal A}$.
It is well-known that the pushout of the span in $\mathbf{Grp}_{\mathcal A_0}$, which we write $G_1\ast_H G_2$ to distinguish it with the pushout of the underlying $\mathcal A$-sets, is given as follows: for each $a\in\mathcal A_0$, the group $(G_1\ast_H G_2)(a)$ is the quotient of the free monoid over the set $G_1\cup G_2$, regarding $G_1\cap G_2=\{e\}$, by the relation $\sim$ generated by
\begin{itemize}
  \item insertions and deletions of the unit, i.e. for $x_i\in G_1\cup G_2$,
\[
(x_1,\dots,x_n)
\sim (x_1,\dots,x_k,e,x_{k+1},\dots,x_n)\ ;
\]
  \item multiplicativities of $G_1$ and $G_2$, i.e. for $x_i\in G_1\cup G_2$ with $x_k$ and $x_{k+1}$ lying in the common group,
\[
(x_1,\dots,x_n)
\sim (x_1,\dots,x_{k-1},x_kx_{k+1},x_{k+2},\dots,x_n)\ ;
\]
  \item $H$-invariance, i.e. for $x_i\in G_1\cup G_2$ with $x_k\in G_j$ and $x_{k+1}\in G_{j'}$, and for $h\in H$,
\[
(x_1,\dots,x_n)
\sim (x_1,\dots,x_{k-1},x_k f_j(h)^{-1},f_{j'}(h)x_{k+1},x_{k+2},\dots,x_n)\ .
\]
\end{itemize}
For each morphism $\varphi:a\to b\in\mathcal A$, we define a map $\varphi^\ast:(G_1\ast_H G_2)(b)\to (G_1\ast_H G_2)(a)$ inductively by
\[
\begin{gathered}
\varphi^\ast(e)=e \\
\varphi^\ast(x_1,\dots,x_n)=(\varphi^{x_n})^\ast(x_1,\dots,x_{n-1})\varphi^\ast(x_n)\ .
\end{gathered}
\]
It is easily verified the definition is invariant under the relation above, so $\varphi^\ast$ is well-defined.
Now, it is tedious but not difficult to see it is a unique structure which makes $G_1\ast_H G_2$ into a crossed $\mathcal A$-group, which completes the proof.
\end{proof}

\begin{corollary}
\label{cor:crsgrp-filtcolim}
The forgetful functor $\mathbf{CrsGrp}_{\mathcal A}\to\mathbf{Set}_{\mathcal A}$ creates filtered colimits.
\end{corollary}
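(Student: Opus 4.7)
The plan is to stack the present corollary on top of \cref{prop:crsgrp-colim} together with the classical fact that the forgetful functor $\mathbf{Grp}\to\mathbf{Set}$ creates filtered colimits. Since filtered colimits in the presheaf categories $\mathbf{Grp}_{\mathcal A_0}$ and $\mathbf{Set}_{\mathcal A_0}$ are computed pointwise, the composite
\[
\mathbf{CrsGrp}_{\mathcal A}\longrightarrow\mathbf{Grp}_{\mathcal A_0}\longrightarrow\mathbf{Set}_{\mathcal A_0}
\]
already creates filtered colimits. The only thing left to check is that this object-wise colimit upgrades from $\mathbf{Set}_{\mathcal A_0}$ to a genuine colimit in the presheaf category $\mathbf{Set}_{\mathcal A}$, i.e.\ that the transition maps $\varphi^\ast$ match up correctly.

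More concretely, given a filtered diagram $D\colon I\to\mathbf{CrsGrp}_{\mathcal A}$, I would first invoke \cref{prop:crsgrp-colim} to obtain the colimit $G:=\operatorname{colim}_I D(i)$ in $\mathbf{CrsGrp}_{\mathcal A}$, whose value at each $a$ is the set-level filtered colimit $\operatorname{colim}_I D(i)(a)$. Each element $x\in G(b)$ is then represented by some $x_i\in D(i)(b)$, and since the canonical cocone map $D(i)\to G$ is required to be a map of crossed $\mathcal A$-groups, the action $\varphi^\ast\colon G(b)\to G(a)$ is forced to satisfy $\varphi^\ast([x_i])=[\varphi^\ast(x_i)]$ for every $\varphi\colon a\to b$ in $\mathcal A$. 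This is precisely the structure map induced on the pointwise filtered colimit, so the underlying $\mathcal A$-set of $G$ realises $\operatorname{colim}_I D(i)$ in $\mathbf{Set}_{\mathcal A}$.

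Uniqueness of the crossed group lift along a prescribed colimit cocone in $\mathbf{Set}_{\mathcal A}$ follows by the same representative argument: the group multiplication on each $G(a)$, the action of $G(b)$ on $\mathcal A(a,b)$, and the verification of axioms \ref{condCG:gdist} and \ref{condCG:cdist} each involve only finitely many elements at once, and are therefore determined by, and inherited from, the corresponding structures on the $D(i)$ by filteredness of $I$. I do not anticipate any substantive obstacle; the only point requiring care is the bookkeeping that distinguishes $\mathbf{Set}_{\mathcal A_0}$ from $\mathbf{Set}_{\mathcal A}$, which is settled once it is observed that filtered colimits in a presheaf category are pointwise.
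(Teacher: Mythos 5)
Your argument is correct and is essentially the paper's own proof: the paper simply packages your reasoning as a commutative square of forgetful functors $\mathbf{CrsGrp}_{\mathcal A}\to\mathbf{Grp}_{\mathcal A_0}\to\mathbf{Set}_{\mathcal A_0}$ and $\mathbf{CrsGrp}_{\mathcal A}\to\mathbf{Set}_{\mathcal A}\to\mathbf{Set}_{\mathcal A_0}$, in which the left leg creates filtered colimits by \cref{prop:crsgrp-colim} and the bottom and right legs do so by the classical pointwise facts you invoke. Your explicit check that the induced maps $\varphi^\ast$ on the pointwise colimit agree with the $\mathbf{Set}_{\mathcal A}$-colimit structure is exactly the bookkeeping the paper leaves implicit, so no change of method is involved.
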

\begin{proof}
We have the following commutative square of forgetful functors:
\[
\xymatrix{
  \mathbf{CrsGrp}_{\mathcal A} \ar[r] \ar[d] & \mathbf{Set}_{\mathcal A} \ar[d] \\
  \mathbf{Grp}_{\mathcal A_0} \ar[r] & \mathbf{Set}_{\mathcal A_0} }
\]
By \cref{prop:crsgrp-colim}, the left functor creates filtered colimits, and it is well-known that so does the right and the bottom.
Hence, the top functor also creates filtered colimits, which is exactly what we want to show.
\end{proof}

We now get into a difficult part: the limits.
Fortunately, it turns out that the essential difficulty comes only from terminal objects and not from pullbacks.

\begin{proposition}
\label{prop:crsgrp-limconn}
Let $C$ be a category with a terminal object.
Then, the forgetful functor $\mathbf{CrsGrp}_{\mathcal A}\to\mathbf{Set}_{\mathcal A}$ creates $C$-limits.
In particular, the category $\mathbf{CrsGrp}_{\mathcal A}$ has pullbacks which are computed degreewisely.
\end{proposition}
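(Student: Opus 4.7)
The plan is to reduce everything to the observation that a terminal object $t \in C$ gives, for each $c \in C$, a canonical crossed-group map $F(c) \to F(t)$ corresponding to the unique arrow $c \to t$. Let $F \colon C \to \mathbf{CrsGrp}_{\mathcal A}$ be given and write $U$ for the forgetful functor. Since $\mathbf{Set}_{\mathcal A}$ is complete, the limit $L := \lim UF$ exists and is computed pointwise: for each $a \in \mathcal A$,
\[
L(a) = \Bigl\{\, (x_c)_{c\in C} \in \prod_{c\in C} F(c)(a) \,\Big|\, F(f)(x_c) = x_{c'} \text{ for all } f \colon c\to c' \text{ in } C\,\Bigr\}\,,
\]
with projections $\pi_c \colon L \to F(c)$. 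My goal is to show this $L$ admits a unique crossed $\mathcal A$-group structure making every $\pi_c$ a map of crossed groups, and that it thereby becomes the $C$-limit in $\mathbf{CrsGrp}_{\mathcal A}$.

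For the group structure on each $L(a)$, I invoke the well-known fact that limits in $\mathbf{Grp}$ are created by the forgetful functor to $\mathbf{Set}$: the compatible tuples in $\prod_c F(c)(a)$ form a subgroup of the product. The nontrivial step is the left action of $L(a)$ on $\mathcal A(b,a)$. For $(x_c) \in L(a)$ and $\varphi \in \mathcal A(b,a)$, I define
\[
\varphi^{(x_c)} := \varphi^{x_t}\,,
\]
using the value at the terminal object $t$. To see this is forced, note that if each $\pi_c$ is to be a crossed-group map, then $\varphi^{(x_c)} = \varphi^{\pi_c(x)} = \varphi^{x_c}$ must hold for every $c$; since $F(u_c) \colon F(c) \to F(t)$ is a map of crossed groups for the unique $u_c \colon c \to t$, compatibility $F(u_c)(x_c) = x_t$ gives $\varphi^{x_c} = \varphi^{F(u_c)(x_c)} = \varphi^{x_t}$, so the prescription is consistent and the structure is uniquely determined. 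Similarly, the restriction $\varphi^\ast \colon L(a) \to L(b)$ is defined componentwise, $\varphi^\ast((x_c)_c) := (\varphi^\ast(x_c))_c$; this is the underlying $\mathcal A$-set structure of $L$ already, and compatibility of the resulting tuple across morphisms $f \colon c \to c'$ follows because each $F(f)$ is a morphism of $\mathcal A$-sets.

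Next I verify \ref{condCG:gdist} and \ref{condCG:cdist} for $L$. Both can be checked by pulling back through a single component, and the simplest is $c = t$: the identity $(\psi\varphi)^{x_t} = \psi^{x_t} \varphi^{\psi^\ast(x_t)}$ for $F(t)$ gives \ref{condCG:gdist} for $L$ because the right-hand side matches $\psi^{(x_c)} \varphi^{\psi^\ast((x_c))}$ under our definitions, and analogously for \ref{condCG:cdist}; here one uses that $\pi_t$ is compatible with $\varphi^\ast$ and group multiplication so that the axiom for $L$ reduces exactly to that for $F(t)$. Finally, the universal property is immediate: a cone $(G \to F(c))_c$ in $\mathbf{CrsGrp}_{\mathcal A}$ is the same as a cone of $\mathcal A$-sets (by definition of morphism in $\mathbf{CrsGrp}_{\mathcal A}$) satisfying the group- and action-compatibility, and the unique $\mathcal A$-set factorization $G \to L$ automatically preserves multiplication (componentwise) and the action (by the above formula applied to $c = t$).

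The only real obstacle is the bookkeeping around why the action on $\mathcal A(b,a)$ is well-defined; everything hinges on the single fact that in a crossed-group map all actions collapse to a common value, so the presence of a terminal object $t \in C$ supplies a canonical choice of representative. The statement about pullbacks is then a special case, since the cospan category $\{\bullet \to \bullet \leftarrow \bullet\}$ has a terminal object (the middle node).
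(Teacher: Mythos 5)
Your proposal is correct and follows essentially the same route as the paper: compute the limit degreewise in $\mathbf{Set}_{\mathcal A}$, endow it with the degreewise group structure created from $\mathbf{Grp}$, and let it act on each $\mathcal A(b,a)$ through the projection to the terminal-object component $F(t)$, the compatibility $\varphi^{x_c}=\varphi^{x_t}$ forcing uniqueness. The only cosmetic point is that the verification of \ref{condCG:cdist} is an equation of tuples, so it is checked in every component $c$ (using the axiom in $F(c)$ together with the collapse $\varphi^{y_c}=\varphi^{y_t}$ you already noted), not literally by the single component $c=t$; this changes nothing in substance.
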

\begin{proof}
Put $t\in C$ a terminal object, and suppose we are given a functor $G_\bullet:C\to\mathbf{CrsGrp}_{\mathcal A}$.
Taking the limit in the category $\mathbf{Set}_{\mathcal A}$, we put $G_\infty=\lim_CG_\bullet\in\mathbf{Set}_{\mathcal A}$.
Note that $G_\infty$ admits a unique degreewise group structure so that the canonical $\mathcal A$-map $G_\infty\to G_c$ is a degreewise group homomorphism for each $c\in C$.
In particular, for each $a\in\mathcal A$, the group $G_\infty(a)$ inherits an action on $\mathcal A(b,a)$ for each $b\in\mathcal A$ through the homomorphism $G_\infty(a)\to G_t(a)$.
One can then see these structures exhibit $G_\infty$ as a crossed $\mathcal A$-group.
\end{proof}

In order to show the existence of a terminal object, we furthermore show the category $\mathbf{CrsGrp}_{\mathcal A}$ is locally presentable.
Indeed, it is known that as for locally presentable categories, limits can be realized as \emph{colimits}.
Fortunately, $\mathbf{CrsGrp}_{\mathcal A}$ is cocomplete, so, in view of Corollary 2.47 in \cite{AdamekRosicky1994}, we only have to show the accessibility.
Now, for a small category $C$, we denote by $|C|$ the cardinality of the set of morphisms of $C$.
Then, in the rest of the section, we are to prove the following theorem.

\begin{theorem}
\label{theo:crsgrp-locpres}
Let $\mathcal A$ be a small category, and let $\kappa$ be an (infinite) regular cardinal greater than $\omega\times|\mathcal A|$, here $\omega$ is the smallest infinite cardinal.
Then, the category $\mathbf{CrsGrp}_{\mathcal A}$ is locally $\kappa$-presentable; i.e. it is cocomplete, and every object can be written as a $\kappa$-filtered colimit of $\kappa$-small objects.
\end{theorem}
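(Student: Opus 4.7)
The plan is to reduce local $\kappa$-presentability to $\kappa$-accessibility: cocompleteness has already been established in \cref{prop:crsgrp-colim}, so by \cite[Corollary~2.47]{AdamekRosicky1994} it is enough to prove that $\mathbf{CrsGrp}_{\mathcal A}$ is $\kappa$-accessible, i.e.\ every crossed group is a $\kappa$-filtered colimit of $\kappa$-presentable objects, and these form a set up to isomorphism.

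The technical core will be a size bound: for any crossed $\mathcal A$-group $G$ and any $S\subseteq\coprod_{a\in\mathcal A}G(a)$ with $|S|<\kappa$, the crossed $\mathcal A$-subgroup $\langle S\rangle\subseteq G$ generated by $S$ should satisfy $|\langle S\rangle|<\kappa$. I plan to construct $\langle S\rangle$ by an $\omega$-fold iteration: set $S_0$ to be the union, over $a\in\mathcal A$, of the ordinary subgroup of $G(a)$ generated by $S\cap G(a)$, and let $S_{n+1}$ be the analogous subgroup closure of $S_n\cup\{\varphi^{\ast}(x)\mid x\in S_n\cap G(b),\ \varphi\in\mathcal A(a,b)\}$. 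Applying all $\varphi^{\ast}$ multiplies the cardinality by at most $|\mathcal A|$, and subgroup closure multiplies it by at most $\omega$, so each step stays $<\kappa$ by regularity of $\kappa$ together with $\kappa>\omega\times|\mathcal A|$. Then $S_\infty:=\bigcup_{n<\omega}S_n$ is still of cardinality $<\kappa$ since $\kappa$ is uncountable regular, and the identities \ref{condCG:gdist} and \ref{condCG:cdist} combined with the two families of closure operations ensure $S_\infty$ is genuinely a crossed $\mathcal A$-subgroup of $G$.

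Granted this bound, $G$ is recovered as the $\kappa$-filtered colimit of the poset $\mathrm{Sub}_{<\kappa}(G)$ of its $\kappa$-small crossed subgroups, which is $\kappa$-directed because the union of fewer than $\kappa$ subsets of size $<\kappa$ generates a still-$\kappa$-small crossed subgroup by the lemma; \cref{cor:crsgrp-filtcolim} identifies this colimit with $G$ on underlying $\mathcal A$-sets. It then remains to verify that every $\kappa$-small crossed $\mathcal A$-group $G$ is $\kappa$-presentable. Given a map $f\colon G\to H_\infty$ into a $\kappa$-filtered colimit $H_\infty=\operatorname{colim}_iH_i$, the underlying map of $\mathcal A$-sets factors through some $H_i$ because $\mathbf{Set}_{\mathcal A}$ is locally $\kappa$-presentable and the underlying $\mathcal A$-set of $G$ has $<\kappa$ elements. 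The resulting $f_i$ may not yet respect the crossed group structure, but the defining conditions---degreewise multiplicativity and the compatibility $\varphi^{f_i(x)}=\varphi^x$ with the actions on hom-sets---are indexed by a set of cardinality less than $\kappa$, and each such equation holds in $H_\infty$, so $\kappa$-filteredness produces a common index $i^{\ast}\geq i$ where all of them hold simultaneously, yielding the required factorization.

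The hard part will be the size bound on $\langle S\rangle$. Condition \ref{condCG:cdist} shows that $\varphi^{\ast}$ is only a twisted homomorphism, so closures under $\varphi^{\ast}$ and under the group multiplication genuinely interact and cannot be applied once-for-all; the argument must confirm that countably many alternations already suffice and do not blow up past $\kappa$, which is precisely where the hypothesis $\kappa>\omega\times|\mathcal A|$ enters essentially. The remaining steps---presenting $G$ as the filtered union of its $\kappa$-small crossed subgroups and the eventual-factorization argument for presentability---should be routine once this size bound is in hand.
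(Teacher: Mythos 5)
Your proposal follows essentially the same route as the paper: reduce to $\kappa$-accessibility via \cite[Corollary~2.47]{AdamekRosicky1994} using cocompleteness (\cref{prop:crsgrp-colim}), show that crossed groups with fewer than $\kappa$ elements are $\kappa$-presentable (this is \cref{lem:crsgrp-smallobj}; your eventual-factorization argument is the paper's, which merely reduces further to chains and uses cofinality), and exhibit every crossed group as the $\kappa$-directed union of its $\kappa$-small crossed subgroups via a generation lemma, using \cref{cor:crsgrp-filtcolim} to identify the colimit. The one real difference is inside the generation lemma: you close up under subgroup generation and under the maps $\varphi^\ast$ by an $\omega$-fold alternation, whereas \cref{lem:crsgrp-subgen} avoids iteration altogether by noting that, for a single generator $x\in G(a_0)$, condition \ref{condCG:cdist} expands $\psi^\ast$ of a product of elements $\varphi_i^\ast(x^{\pm1})$ into another such product, so the words in $\{\varphi^\ast(x^{\pm1})\mid\varphi\in\mathcal A(a,a_0)\}$ are already an $\mathcal A$-subset; the general case is then handled by free products and \cref{lem:crsgrp-kappa-cocompl}. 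Your iteration is correct and your cardinal estimates go through; in fact the alternation issue you flag as the hard point is harmless, because the increasing union is closed under both operations by construction, and a crossed $\mathcal A$-subgroup is nothing more than a degreewise subgroup that is a sub-$\mathcal A$-set (\ref{condCG:gdist} and \ref{condCG:cdist} are inherited automatically).

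Two routine but necessary points are absent from your plan and should be added. First, accessibility in the sense used here requires a \emph{set} of $\kappa$-presentable objects; you state this in your first paragraph but never address it. The paper's \cref{lem:crsgrp-kappa-s} supplies it by bounding the number of isomorphism classes of crossed groups with underlying set of size $<\kappa$ (at most $\kappa^\kappa$). Second, $\kappa$-presentability of a $\kappa$-small $G$ means bijectivity of the canonical map $\operatorname{colim}_i\mathbf{CrsGrp}_{\mathcal A}(G,H_i)\to\mathbf{CrsGrp}_{\mathcal A}(G,H_\infty)$, and you only argue surjectivity; injectivity is easy (two maps $G\to H_i$ agreeing in $H_\infty$ have underlying $\mathcal A$-maps identified at some stage, and the forgetful functor is faithful), but it must be said, as in the paper's diagram \eqref{eq:prf:crsgrp-smallobj:}. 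Finally, a small simplification: the compatibility of your $f_i$ with the actions on hom-sets need not be arranged at a later stage at all, since the cocone maps $H_i\to H_\infty$ already respect the actions, so $\varphi^{f_i(x)}=\varphi^{f(x)}=\varphi^x$ holds automatically; only the $\mathcal A$-naturality and degreewise multiplicativity equations need the passage to a larger index, exactly as in the paper's proof.
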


\begin{corollary}
\label{cor:crsgrp-compl}
For every small category $\mathcal A$, the category $\mathbf{CrsGrp}_{\mathcal A}$ is both complete and cocomplete.
In particular, it admits a terminal object.
\end{corollary}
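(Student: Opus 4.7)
The corollary is essentially an immediate consequence of \cref{theo:crsgrp-locpres}. By that theorem, $\mathbf{CrsGrp}_{\mathcal A}$ is locally $\kappa$-presentable for $\kappa>\omega\times|\mathcal A|$, and a standard result on locally presentable categories (Corollary 2.47 of \cite{AdamekRosicky1994}) guarantees that any such category is both complete and cocomplete. Cocompleteness is already recorded in \cref{prop:crsgrp-colim}, and the existence of a terminal object then follows as the empty product. So the only substantive ingredient is the theorem, whose proof I sketch next.

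For the theorem, cocompleteness is given by \cref{prop:crsgrp-colim}, and what remains is accessibility. My plan is to exhibit every crossed $\mathcal A$-group $G$ as the $\kappa$-filtered colimit of its $\kappa$-small subcrossed subgroups. For a subset $S\subseteq\coprod_{a\in\mathcal A}G(a)$, I would construct the smallest subcrossed $\mathcal A$-group $\langle S\rangle$ containing $S$ by iteratively closing under the group operations in each $G(a)$ and under the pullback maps $\varphi^\ast:G(b)\to G(a)$ for every $\varphi\in\mathcal A(a,b)$. Note that the twisted action $(x,\varphi)\mapsto\varphi^x$ lands back in $\mathcal A(a,b)$ and does not enlarge $G$, so these are the only operations relevant to the closure. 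Each closure step adds at most $|\mathcal A|$ new elements per existing element (one for each $\varphi$ in $\mathcal A$), and, since $\kappa$ is regular and strictly greater than $\omega\times|\mathcal A|$, a $\kappa$-small seed $S$ produces a $\kappa$-small closure $\langle S\rangle$ after fewer than $\kappa$ transfinite steps.

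Next, the family of $\kappa$-small subcrossed subgroups of $G$ forms a $\kappa$-filtered poset, with upper bounds supplied by $\langle-\rangle$ applied to unions. The colimit of this diagram in $\mathbf{Set}_{\mathcal A}$, which agrees with the colimit in $\mathbf{CrsGrp}_{\mathcal A}$ by \cref{cor:crsgrp-filtcolim}, recovers $G$, since every element $x\in G(a)$ lies in $\langle\{x\}\rangle$. To conclude local $\kappa$-presentability I would then verify that each $\kappa$-small $H$ is $\kappa$-presentable: any map $H\to\operatorname{colim}_iK_i$ in $\mathbf{CrsGrp}_{\mathcal A}$ is determined by the images of fewer than $\kappa$ many generators of $H$; each image lies in some $K_i$, and $\kappa$-filteredness lets me collect them into a common $K_{i_0}$, after which the fewer-than-$\kappa$ defining relations equalise in some further stage.

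The step I expect to be most delicate is verifying that the transfinite closure $\langle S\rangle$ stabilises while remaining $\kappa$-small and genuinely inheriting the structure of a subcrossed group. The key observation making this work is that every axiom, in particular \ref{condCG:gdist} and \ref{condCG:cdist}, involves only finitely many elements of $G$ together with finitely many morphisms of $\mathcal A$, so these axioms transfer automatically to $\langle S\rangle$ once the closure is complete, and the bound $\kappa>\omega\times|\mathcal A|$ together with regularity preserves the cardinality throughout the iteration.
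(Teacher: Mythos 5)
Your reduction of the corollary to \cref{theo:crsgrp-locpres} is exactly the paper's: cocompleteness is \cref{prop:crsgrp-colim}, and completeness (hence the terminal object) follows from local presentability via Corollary 2.47 of \cite{AdamekRosicky1994}. Where you differ is in the accessibility half of the theorem, specifically your replacement for \cref{lem:crsgrp-subgen}. The paper produces the $\kappa$-small crossed subgroup containing a singleton $\{x\}$, $x\in G(a_0)$, in a single step, as the degreewise subgroups generated by $\{\varphi^\ast(x^{\pm 1})\mid\varphi:a\to a_0\}$, and verifies closure under every $\psi^\ast$ by an explicit computation resting on \ref{condCG:cdist} (the maps $\varphi^\ast$ are not homomorphisms, so closure of a generated subgroup is not formal); a general $S$ is then handled by the image of a free product, with \cref{lem:crsgrp-kappa-cocompl} controlling the cardinality. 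You instead run a generic iterative closure under the group operations and all the maps $\varphi^\ast$. This is sound: a degreewise subgroup which is an $\mathcal A$-subset automatically inherits the crossed structure (the twisted action imposes no extra closure condition, exactly as the paper observes in the proof of \cref{lem:crsgrp-subgen}, and note that the twisted pullbacks $(\varphi^y)^\ast$ are again of the form $\psi^\ast$, so your closure already covers them); the operations are finitary or unary, so the closure stabilises after $\omega$ stages, and regularity of $\kappa>\omega\times|\mathcal A|$ keeps it $\kappa$-small. Your route trades the paper's explicit identity for soft transfinite bookkeeping, which is arguably more robust. What it leaves implicit, and what you should add so that the accessibility claim matches the definition in \cite{AdamekRosicky1994}, are two routine points the paper states separately: essential smallness of $\mathbf{CrsGrp}^{<\kappa}_{\mathcal A}$ (\cref{lem:crsgrp-kappa-s}), which furnishes the required \emph{set} of $\kappa$-presentable objects, and the injectivity half of the comparison bijection in \cref{lem:crsgrp-smallobj} (in the paper this comes from faithfulness of the underlying-set functor $G\mapsto\underline{G}$ together with the fact that filtered colimits in $\mathbf{Set}$ preserve injections), your sketch addressing only the surjectivity/factorisation half.
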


We first give a criterion for the smallness of object in $\mathbf{CrsGrp}_{\mathcal A}$.
Recall that, as for an ordinary set $S$, it is $\kappa$-small in $\mathbf{Set}$ precisely if $|S|<\kappa$.
Thus, a key idea is considering the functor
\[
\underline{(\blankdot)}:\mathbf{CrsGrp}_{\mathcal A}\to\mathbf{Set}
\ ;\quad G\mapsto\underline G:= \coprod_{a\in\mathcal A}G(a)
\]
to compare smallnesses in these two categories.

\begin{lemma}
\label{lem:crsgrp-smallobj}
Let $\kappa$ be a regular cardinal greater than $|\mathcal A|$.
Then, a crossed $\mathcal A$-group $G$ is $\kappa$-small provided the cardinality of the set $\underline G$ is less than $\kappa$.
\end{lemma}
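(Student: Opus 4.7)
The plan is to verify directly that $\mathrm{Hom}_{\mathbf{CrsGrp}_{\mathcal A}}(G,-)$ preserves $\kappa$-filtered colimits. Fix a $\kappa$-filtered diagram $H_\bullet : I \to \mathbf{CrsGrp}_{\mathcal A}$ with colimit $H_\infty$. By \cref{cor:crsgrp-filtcolim}, the underlying $\mathcal A$-set of $H_\infty$ is the colimit of the underlying $\mathcal A$-sets of the $H_i$; and since $\coprod_{a\in\mathcal A}$ preserves all colimits we obtain $\underline{H_\infty}=\mathrm{colim}_i\,\underline{H_i}$ in $\mathbf{Set}$.

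For surjectivity, let $f:G\to H_\infty$ be a map of crossed groups. Since $|\underline G|<\kappa$, I would choose for each $x\in\underline G$ a representative $\tilde f(x)\in\underline{H_{i_x}}$ of $f(x)$, then $\kappa$-filteredness pushes these forward to a single index $i_0$, producing a set map $\tilde f:\underline G\to\underline{H_{i_0}}$ that preserves the $\mathcal A$-grading and lifts $f$. It remains to arrange, by passing to a later index, that $\tilde f$ respects the crossed-group structure. The required identities split into three types: (a) $\tilde f(xy)=\tilde f(x)\tilde f(y)$ for each pair $x,y\in G(a)$; (b) $\tilde f(\varphi^\ast x)=\varphi^\ast\tilde f(x)$ for each $\varphi\in\mathrm{mor}(\mathcal A)$ and $x\in G(\mathrm{cod}\,\varphi)$; and (c) $\varphi^{\tilde f(x)}=\varphi^x$ as elements of $\mathcal A(\mathrm{dom}\,\varphi,\mathrm{cod}\,\varphi)$. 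Condition (c) is automatic: because the cone map $H_{i_0}\to H_\infty$ is itself a morphism of crossed groups, one has $\varphi^{\tilde f(x)}=\varphi^{f(x)}=\varphi^x$. Each of the conditions in (a) and (b) is an equation that already holds in $H_\infty$ and therefore in some $H_i$ with $i\ge i_0$; the total number of such equations is bounded by $|\underline G|^2+|\mathrm{mor}(\mathcal A)|\cdot|\underline G|<\kappa$, so a single $\kappa$-filtered upper bound $i_1\ge i_0$ witnesses all of them at once, yielding a map $G\to H_{i_1}$ of crossed groups whose postcomposition with $H_{i_1}\to H_\infty$ is $f$.

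For injectivity, given two parallel maps $g_1,g_2:G\to H_i$ that agree after the cone map, for each of the $<\kappa$ many elements $x\in\underline G$ there exists $j_x\ge i$ on which $g_1(x)=g_2(x)$; $\kappa$-filteredness produces a common $j\ge i$ at which $g_1=g_2$ in $\mathbf{CrsGrp}_{\mathcal A}$.

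The one subtle point is condition (c): unlike (a) and (b) it is an equation whose two sides live in the fixed set $\mathcal A(a,b)$ rather than in the variable object $H$, so it is not immediately of the form one ``pushes forward'' along the filtered diagram. The resolution — that (c) is already forced by $f$ and the fact that filtered colimits of crossed groups are created on underlying $\mathcal A$-sets — is what makes the cardinal bound $|\underline G|<\kappa$ (together with $\kappa>|\mathcal A|$) sufficient rather than needing further hypotheses.
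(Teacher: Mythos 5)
Your proof is correct and follows essentially the same route as the paper: both factor the underlying map $\underline G\to\underline{H_\infty}$ through a stage using $\lvert\underline G\rvert<\kappa$ and \cref{cor:crsgrp-filtcolim}, then correct the group-homomorphism and naturality equations at a later stage by a cardinality count, with the action condition being automatic because the cone maps are maps of crossed groups. The only difference is cosmetic: the paper first reduces to chains of cofinality at least $\kappa$ via Adamek--Rosicky, whereas you argue directly with an arbitrary $\kappa$-filtered diagram.
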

\begin{proof}
Let $G\in\mathbf{CrsGrp}_{\mathcal A}$ with $|\underline G|<\kappa$.
We have to show that the functor
\[
\mathbf{CrsGrp}_{\mathcal A}(G,\blankdot):\mathbf{CrsGrp}_{\mathcal A}\to\mathbf{Set}
\]
preserves $\kappa$-filtered colimits.
In view of Corollary 1.7 in \cite{AdamekRosicky1994}, it suffices to verify the preservation only for sequential ones.
Suppose $\lambda$ is an ordinal of cofinality at least $\kappa$; i.e. every subset $S\subset\lambda$ with $|S|<\kappa$ has a supremum $\sup S\in\lambda$, and take a diagram $H_\bullet:\lambda\to\mathbf{CrsGrp}_{\mathcal A}$.
Writing $H_\infty:=\colim H_\bullet$ for simplicity, we show the canonical map
\begin{equation}
\label{eq:prf:crsgrp-smallobj:canmap}
\colim_{\alpha<\lambda}\mathbf{CrsGrp}_{\mathcal A}(G,H_\alpha)\to\mathbf{CrsGrp}_{\mathcal A}(G,H_\infty)
\end{equation}
is bijective.

Note that, by virtue of \cref{cor:crsgrp-filtcolim}, there is a canonical identification $\underline{H_\infty}\cong \colim_{\alpha<\lambda} \underline{H_\alpha}$, so we obtain a commutative square
\begin{equation}
\label{eq:prf:crsgrp-smallobj:}
\vcenter{
  \xymatrix{
    \colim_{\alpha<\lambda}\mathbf{CrsGrp}_{\mathcal A}(G,H_\alpha) \ar[r] \ar[d]_{\underline{(\blankdot)}} & \mathbf{CrsGrp}_{\mathcal A}(G,H_\infty) \ar[d]^{\underline{(\blankdot)}} \\
    \colim_{\alpha<\lambda}\mathbf{Set}(\underline G,\underline{H_\alpha}) \ar[r]^\simeq & \mathbf{Set}(\underline G,\colim_{\alpha<\lambda}\underline{H_\alpha}) }}
\end{equation}
In view of the criterion of the smallness of sets, the bottom map is bijecttive.
On the other hand, since the functor $\underline{(\blankdot)}$ is faithful, and since filtered colimits in $\mathbf{Set}$ preserves injections, the vertical maps are injective.
Hence, it immediately follows the map \eqref{eq:prf:crsgrp-smallobj:canmap} is injective.
Moreover, for each map $f:G\to H_\infty$ of crossed $\mathcal A$-group, the underlying map $\underline f:\underline G\to\underline{H_\infty}$ factors through a map $\bar f:\underline G\to\underline{H_{\alpha'_0}}$ followed by the structure map $\underline{H_{\alpha'_0}}\to\underline{H_\infty}$ for some ordinal $\alpha'_0<\lambda$.
This does not imply $\bar f$ is an underlying map of a map of crossed $\mathcal A$-group.
Nevertheless, there are functions
\[
\beta:\coprod_{a,b\in\mathcal A}\mathcal A(b,a)\times G(a)\to \lambda
\ ,\quad
\gamma:\coprod_{a\in\mathcal A}G(a)\times G(a)\to \lambda\ ,
\]
such that
\begin{enumerate}[label={\rm(\roman*)}]
  \item $\beta(\varphi,x),\gamma(x,y)>\alpha'_0$ for each $\varphi\in\mathcal A(a,b)$ and $x,y\in G(b)$;
  \item the map $\underline{H_{\alpha'_0}}\to\underline{H_{\beta(\varphi,x)}}$ identifies the elements $\bar f(\varphi^\ast(x))$ and $\varphi^\ast(\bar f(x))$;
  \item the map $\underline{H_{\alpha'_0}}\to\underline{H_{\gamma(x,y)}}$ identifies the elements $\bar f(xy^{-1})$ with $\bar f(x)\cdot\bar f(y)^{-1}$.
\end{enumerate}
The set $\{\beta(\varphi,x)\mid\varphi,x\}\cup\{\gamma(x,y)\mid x,y\}$ is of cardinality $|\mathcal A|\times(|\mathcal A|+|\underline G|)<\kappa$, so the cofinality of $\lambda$ implies there is an ordinal $\alpha_0<\lambda$ with $\alpha_0>\beta(\varphi,x),\gamma(x,y)$ for every $\varphi$, $x$, and $y$.
Now, it is easily verified that the composition
\[
\underline G
\xrightarrow{\bar f}\underline{H_{\alpha'_0}}
\to \underline{H_{\alpha_0}}
\]
underlies a map $f_0:G\to H_{\alpha_0}$ of crossed $\mathcal A$-group, and the map $f:G\to H_\infty$ factors through $f_0$.
In other words, $f$ is the image of $f_0\in\mathbf{CrsGrp}_{\mathcal A}(G,H_{\alpha_0})$.
This implies that the map \eqref{eq:prf:crsgrp-smallobj:canmap} is also surjective.
\end{proof}

Fix a regular cardinal $\kappa$ as in \cref{theo:crsgrp-locpres}, and define $\mathbf{CrsGrp}_{\mathcal A}^{<\kappa}$ to be the full subcategory of $\mathbf{CrsGrp}_{\mathcal A}$ spanned by crossed $\mathcal A$-groups $G$ with $|\underline G|<\kappa$.
According to \cref{lem:crsgrp-smallobj}, all objects in $\mathbf{CrsGrp}_{\mathcal A}^{<\kappa}$ is $\kappa$-small in $\mathbf{CrsGrp}_{\mathcal A}$.

\begin{lemma}
\label{lem:crsgrp-kappa-s}
The category $\mathbf{CrsGrp}_{\mathcal A}^{<\kappa}$ is essentially small.
\end{lemma}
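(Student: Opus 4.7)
The plan is to fix once and for all an ambient set of cardinality $\kappa$ and show that every object of $\mathbf{CrsGrp}_{\mathcal A}^{<\kappa}$ is isomorphic to one whose underlying data is literally carried by a subset of that set. Once this is done, essential smallness reduces to the remark that the collection of all possible crossed group structures carried by a subset of a fixed set is bounded in cardinality, hence forms a genuine set.

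Concretely, first I would choose a set $U$ with $|U|=\kappa$. Given any $G\in\mathbf{CrsGrp}_{\mathcal A}^{<\kappa}$, its underlying disjoint union $\underline G=\coprod_{a\in\mathcal A}G(a)$ has cardinality less than $\kappa$, so it admits an injection into $U$. Transporting the entire crossed $\mathcal A$-group structure along such an injection — i.e., the labelling function $\underline G\to\operatorname{Ob}(\mathcal A)$ identifying each $G(a)$, the group multiplication on each $G(a)$, the maps $\varphi^\ast:G(b)\to G(a)$ for each $\varphi\in\mathcal A(a,b)$, and the left actions on the hom-sets $\mathcal A(a,b)$ — produces a crossed $\mathcal A$-group isomorphic to $G$ whose underlying disjoint union is a subset of $U$.

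Second, I would observe that the data specifying a crossed $\mathcal A$-group structure supported on a subset $V\subseteq U$ is a finite tuple consisting of: a function $V\to\operatorname{Ob}(\mathcal A)$ encoding the decomposition into the $G(a)$'s; a binary operation on each $G(a)$; a function $\varphi^\ast$ for each of the at most $|\mathcal A|$ morphisms of $\mathcal A$; and an action map on each of the at most $|\mathcal A|$ hom-sets (themselves of cardinality at most $|\mathcal A|<\kappa$). Each ingredient is an element of a set of functions between sets of cardinality at most $\kappa$, so it ranges over a set of size at most $2^\kappa$. Since the index sets for these ingredients are subsets of $\mathcal A$ and hence small, the total collection of such structure tuples is a set, yielding a set of representatives for the isomorphism classes of $\mathbf{CrsGrp}_{\mathcal A}^{<\kappa}$.

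I do not anticipate a genuine obstacle: the argument is the standard "bounded-signature" observation, namely that $\kappa$-small models of an algebraic theory whose signature has fewer than $\kappa$ operations form an essentially small category. The only point demanding mild care is to make sure that every ingredient of a crossed $\mathcal A$-group — in particular the actions on the hom-sets, which are indexed by pairs of objects — remains indexed by a set rather than a proper class, and this is guaranteed by the assumption that $\mathcal A$ is small together with $\kappa>\omega\times|\mathcal A|$.
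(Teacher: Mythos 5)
Your argument is correct and follows essentially the same route as the paper: bound the number of possible crossed $\mathcal A$-group structures on $\kappa$-small underlying data (the paper counts such structures as elements of a product of function sets of cardinality at most $\kappa^\kappa$, while you phrase it via transport onto subsets of a fixed ambient set $U$ of size $\kappa$, which is only a cosmetic difference). The only point worth adding explicitly is that a set of isomorphism-class representatives yields essential smallness because $\mathbf{CrsGrp}_{\mathcal A}$ is locally small, which the paper records at the start of its proof.
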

\begin{proof}
Since the category $\mathbf{CrsGrp}_{\mathcal A}$ is locally small, it suffices to show it has only finitely many isomorphism classes.
Notice that, for an indexed family $\{X(a)\}_{a\in\mathcal A}$ of sets with $|X(a)|<\kappa$, a structure of crossed $\mathcal A$-groups can be seen an element of the set
\[
\begin{multlined}
\prod_{\varphi:a\to b\in\mathcal A}\mathbf{Set}(X(b),X(a))
\times\prod_{a\in\mathcal A}\mathbf{Set}(X(a)\times X(a),X(a)) \\
\times\prod_{a,b\in\mathcal A}\mathbf{Set}(X(a),\operatorname{Aut}_{\mathbf{Set}}(\mathcal A(b,a)))
\end{multlined}
\]
whose cardinality is bounded above by
\[
(\kappa^\kappa)^{|\mathcal A|}\times(\kappa^{\kappa\times\kappa})^{|\mathcal A|}\times((|\mathcal A|^{|\mathcal A|})^\kappa)^{|\mathcal A|\times|\mathcal A|}
=\kappa^\kappa
\]
Then, together with the cardinality of choices of the family $\{X(a)\}_{a\in\mathcal A}$, one can see there are only at most $\kappa^{|\mathcal A|}\times\kappa^\kappa=\kappa^\kappa$ isomorphism classes.
\end{proof}

\begin{lemma}
\label{lem:crsgrp-kappa-cocompl}
The subcategory of $\mathbf{CrsGrp}_{\mathcal A}^{<\kappa}\subset\mathbf{CrsGrp}_{\mathcal A}$ is closed under $\kappa$-small colimits; i.e. for every functor $C\to\mathbf{CrsGrp}_{\mathcal A}^{<\kappa}$, its colimit in $\mathbf{CrsGrp}_{\mathcal A}$ again belongs to $\mathbf{CrsGrp}_{\mathcal A}^{<\kappa}$
\end{lemma}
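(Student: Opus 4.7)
The plan is to compute the colimit in $\mathbf{CrsGrp}_{\mathcal A}$ by descending to $\mathbf{Grp}_{\mathcal A_0}$ and then running a straightforward cardinality estimate. Fix a $\kappa$-small category $C$ together with a diagram $G_\bullet : C \to \mathbf{CrsGrp}_{\mathcal A}^{<\kappa}$, and write $G_\infty := \colim_C G_\bullet$ for its colimit taken in $\mathbf{CrsGrp}_{\mathcal A}$. The goal is to show $|\underline{G_\infty}|<\kappa$.

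By \cref{prop:crsgrp-colim}, the forgetful functor $\mathbf{CrsGrp}_{\mathcal A}\to\mathbf{Grp}_{\mathcal A_0}$ creates colimits; moreover, $\mathbf{Grp}_{\mathcal A_0}$ is the product $\prod_{a\in\mathrm{ob}(\mathcal A)}\mathbf{Grp}$, so colimits in it are computed objectwise. Hence for each $a\in\mathcal A$, $G_\infty(a)$ is the colimit of the diagram $c\mapsto G_c(a)$ taken in $\mathbf{Grp}$. This colimit is a quotient of the free product $\ast_{c\in\mathrm{ob}(C)}G_c(a)$, and the underlying set of a free product is in bijection with the set of reduced finite words in the disjoint union of the factors; its cardinality is therefore bounded by $\sum_{n<\omega}\bigl(\sum_{c}|G_c(a)|\bigr)^{n}$.

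Since $\kappa$ is regular and strictly greater than $\omega$, and since $|C|<\kappa$ with each $|G_c(a)|<\kappa$, regularity yields $\sum_{c}|G_c(a)|<\kappa$; the above bound on reduced finite words, being the supremum of $\omega$ many cardinals each $<\kappa$, is then also $<\kappa$. Consequently $|G_\infty(a)|<\kappa$ for every $a\in\mathcal A$. Summing over the objects of $\mathcal A$, whose number is at most $|\mathcal A|<\kappa$, regularity once more gives
\[
|\underline{G_\infty}|=\sum_{a\in\mathrm{ob}(\mathcal A)}|G_\infty(a)|<\kappa,
\]
which is what was required. I do not foresee any genuine obstacle: the argument is pure cardinal bookkeeping built on the hypothesis $\kappa>\omega\times|\mathcal A|$, and the only substantive external ingredient is the realization of the free product of groups as reduced words in the disjoint union, which lets one bypass the intricate relations defining a colimit of groups and reduce the problem to elementary estimates on a regular cardinal.
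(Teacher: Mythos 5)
Your proof is correct and follows essentially the same route as the paper: descend via \cref{prop:crsgrp-colim} to degreewise colimits of groups, present each $G_\infty(a)$ as a quotient of words in $\coprod_{c}G_c(a)$ (the paper phrases this with the free monoid monad, you with the free product and reduced words), and conclude by regularity of $\kappa$ together with $\kappa>\omega\times|\mathcal A|$. The cardinal bookkeeping matches the paper's estimate $|\underline{G_\infty}|\le|\mathcal A|\times\omega\times|C|\times\sup_c|\underline{G_c}|<\kappa$.
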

\begin{proof}
Let $G_\bullet:C\to\mathbf{CrsGrp}_{\mathcal A}^{<\kappa}$ be a diagram with $|C|<\kappa$.
Since the category $\mathbf{CrsGrp}_{\mathcal A}$ is cocomplete by \cref{prop:crsgrp-colim}, we put $G_\infty:=\colim_{c\in C} G_c\in\mathbf{CrsGrp}_{\mathcal A}$.
According to \cref{prop:crsgrp-colim}, for each $a\in\mathcal A$, there is a surjection
\[
\left(\coprod_{c\in C}G_c(a)\right)^\ast
\twoheadrightarrow G_\infty(a)\ ,
\]
where $(\blankdot)^\ast:\mathbf{Set}\to\mathbf{Set}$ is the \emph{free monoid monad}, while we have
\[
\left|\left(\coprod_{c\in C}G_c(a)\right)^\ast\right|
\le \omega\times\sum_{c\in C} |G_c(a)|
\le \omega\times|C|\times|\underline G|
\]
for each $a\in\mathcal A$.
It follows that
\[
|\underline{G_\infty}|
\le |\mathcal A|\times \omega\times|C|\times|\underline G|
< \kappa\ ,
\]
which shows $G_\infty\in\mathbf{CrsGrp}_{\mathcal A}^{<\kappa}$.
Thus, the required result follows.
\end{proof}

\begin{lemma}
\label{lem:crsgrp-subgen}
Let $G$ be a crossed $\mathcal A$-group.
Suppose we are given a subset $S\subset\underline G$ of cardinality less than $\kappa$.
Then, there is a crossed $\mathcal A$-subgroup $G'\subset G$ such that $S\subset\underline G'$ and $G'\in\mathbf{CrsGrp}_{\mathcal A}^{<\kappa}$.
\end{lemma}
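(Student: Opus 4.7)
The plan is to construct $G'$ as the closure of $S$ (augmented by the identity elements) under the three structural operations of a crossed $\mathcal A$-group: multiplication, inversion, and the $\mathcal A$-action $\varphi^\ast$. Concretely, I set $S_0 := S \cup \{e_{G(a)} \mid a\in\mathcal A\}$ and inductively define
\[
S_{n+1} := S_n \cup \{xy^{-1} \mid a\in\mathcal A,\ x,y\in S_n\cap G(a)\} \cup \{\varphi^\ast(x) \mid \varphi:b\to a,\ x\in S_n\cap G(a)\},
\]
then take $S_\infty := \bigcup_{n<\omega} S_n$ and put $G'(a) := S_\infty\cap G(a)$.

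The critical estimate is the cardinality count, where the hypothesis $\kappa > \omega\times|\mathcal A|$ and the regularity of $\kappa$ enter decisively. One has $|S_0| \le |S| + |\mathcal A| < \kappa$; assuming inductively that $|S_n|<\kappa$, the standard fact that the product of two cardinals strictly below an infinite regular $\kappa$ stays strictly below $\kappa$ gives
\[
|S_{n+1}| \le |S_n| + |S_n|\cdot|S_n| + |\mathcal A|\cdot|S_n| < \kappa.
\]
Regularity together with $\omega<\kappa$ then yields $|S_\infty| < \kappa$, whence $|\underline{G'}| < \kappa$ and $G' \in \mathbf{CrsGrp}_{\mathcal A}^{<\kappa}$.

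It remains to check that $G'$ is genuinely a crossed $\mathcal A$-subgroup of $G$. By closure, each $G'(a)$ is a subgroup of $G(a)$ containing the identity, and the family $\{G'(a)\}_{a\in\mathcal A}$ is stable under every $\varphi^\ast$, so $G'$ inherits the $\mathcal A$-set and groupwise group structure from $G$. The $G(a)$-action on $\mathcal A(b,a)$ restricts trivially to an action of $G'(a)$, since no elements of $\underline G$ are produced by this action, so no extra closure is required for it. The axioms \ref{condCG:gdist} and \ref{condCG:cdist} then hold automatically in $G'$, because they hold in $G$ and all elements appearing in them lie in $G'$ by construction; hence $G'\hookrightarrow G$ is a map of crossed $\mathcal A$-groups.

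I expect the only real subtlety to lie in the cardinal arithmetic rather than in the algebraic verification. The content of the argument is that a countable iteration of ``apply $\varphi^\ast$, multiply, invert'' preserves size $<\kappa$, and the hypothesis $\kappa>\omega\times|\mathcal A|$ is just barely enough: one must invoke regularity both to bound $|S_{n+1}|$ in terms of $|S_n|$ and $|\mathcal A|$, and to bound $|S_\infty|$ after the countable union.
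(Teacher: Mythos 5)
Your argument is correct, but it follows a genuinely different route from the paper's. You build $G'$ by a countable saturation: starting from $S$ together with all units, you close under $(x,y)\mapsto xy^{-1}$ and under every restriction map $\varphi^\ast$, and then observe that a crossed $\mathcal A$-subgroup is nothing more than a degreewise subgroup that is stable under all $\varphi^\ast$ (the actions on hom-sets and the axioms \ref{condCG:gdist}, \ref{condCG:cdist} restrict for free). This makes the algebraic verification essentially trivial and concentrates all the work in the cardinal estimate, where regularity of $\kappa>\omega\times|\mathcal A|$ handles the countable union $\bigcup_n S_n$. The paper instead first treats the singleton case $S=\{x\}$, $x\in G(a_0)$, by taking $G'(a)$ to be the subgroup generated by the single layer $\{\varphi^\ast(x^{\pm1})\mid \varphi:a\to a_0\}$ with no iteration; the price is that stability under $\psi^\ast$ is not automatic and must be checked via the crossed-group identity, using that the twisted morphisms $\varphi_i\psi^{(\cdots)}$ still have target $a_0$, so that $\psi^\ast$ of a word in generators is again a word in generators. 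The general case is then assembled by taking the image of the free product of the singleton-generated subgroups and invoking \cref{lem:crsgrp-kappa-cocompl} for the size bound. Your approach buys robustness and avoids both the identity computation and the appeal to the colimit lemma; the paper's buys a more explicit and economical description of $G'$ (each $G'(a)$ is generated by at most $2|\mathcal A(a,a_0)|$ elements per point of $S$). One small imprecision worth noting: the bound $|S_n|\cdot|S_n|<\kappa$ needs only that $\kappa$ is infinite (the product of two infinite cardinals below $\kappa$ equals their maximum); regularity, more precisely uncountable cofinality, is genuinely needed only for the union step $|S_\infty|<\kappa$.
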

\begin{proof}
If $S=\varnothing$, the statement is obvious.
In the case $S$ is a singleton, say $S=\{x\}$ with $x\in G(a_0)$, for each $a\in\mathcal A$, we set $G'(a)\subset G(a)$ to be the subgroup generated by the subset
\begin{equation}
\label{eq:prf:crsgrp-subgen:single}
\left\{\varphi^\ast(x^\varepsilon)\ \middle|\ \varepsilon=\pm1,\ \varphi:a\to a_0\in\mathcal A\right\}\ .
\end{equation}
Since $G'(a)\subset G(a)$ is clearly a subgroup, in order to see $G'$ actually forms a crossed $\mathcal A$-subgroup of $G$, it is enough to check it is an $\mathcal A$-subset.
Note that since the subset \eqref{eq:prf:crsgrp-subgen:single} is closed under inverses, every element in $G'(a)$ can be written as products of elements of \eqref{eq:prf:crsgrp-subgen:single}.
For $\psi:b\to a\in\mathcal A$, $\varepsilon_i=\pm1$, and $\varphi_i:a\to a_0\in\mathcal A$, we have
\[
\psi^\ast(\varphi_1^\ast(x^{\varepsilon_1})\dots\varphi_k^\ast(x^{\varepsilon_k}))
=(\varphi_1\psi^{\varphi_2^\ast(x^{\varepsilon_2}_2)\dots\varphi_k^\ast(x^{\varepsilon_k}_k)})^\ast(x^{\varepsilon_1}_1)\dots(\varphi_k\psi)^\ast(x^{\varepsilon_k}_k)\ ,
\]
which shows $\psi^\ast:G(a)\to G(b)$ carries $G'(a)$ into $G'(b)$, and hence $G'\subset G$ is a crossed $\mathcal A$-subgroup.
On the other hand, the definition of $G'$ directly implies
\[
\left|\underline G'\right|
=\left|\coprod_{a\in\mathcal A}G'(a)\right|
\le \omega\times 2\times|\mathcal A|<\kappa
\]
so $G'\in\mathbf{CrsGrp}_{\mathcal A}^{<\kappa}$.

In general case, for each $x\in S$, the proof of the singleton case implies that there is a crossed $\mathcal A$-subgroup $G'_x\subset G$ so that $x\in\underline{G'_x}$ and $G'_x\in\mathbf{CrsGrp}_{\mathcal A}^{<\kappa}$.
We take $G'$ to be the image of the map
\[
G'_1\ast\dots\ast G'_n\to G
\]
of crossed $\mathcal A$-groups induced by the inclusions $G'_i\hookrightarrow G$.
Clearly $S\subset\underline G'$, and \cref{lem:crsgrp-kappa-cocompl} implies that $G'\in\mathbf{CrsGrp}_{\mathcal A}^{<\kappa}$.
This completes the proof.
\end{proof}

\begin{proof}[Proof of \cref{theo:crsgrp-locpres}]
In view of Corollary 2.47 in \cite{AdamekRosicky1994}, it suffices to show  the category $\mathbf{CrsGrp}_{\mathcal A}$ is accessible; i.e. there is a set $A$ of objects such that every object in $\mathbf{CrsGrp}_{\mathcal A}$ can be written as a $\kappa$-filtered colimit of objects from $A$.
This is a direct consequence of \cref{lem:crsgrp-kappa-s} and \cref{lem:crsgrp-subgen}.
\end{proof}

\section{Computation of the terminal object}
\label{sec:terminal}

In the previous section, we proved \cref{cor:crsgrp-compl} that asserts the category $\mathbf{CrsGrp}_{\mathcal A}$ has all small limits and colimits.
In fact, \cref{prop:crsgrp-colim}, \cref{cor:crsgrp-filtcolim}, and \cref{prop:crsgrp-limconn} provide relatively practical ways to compute colimits and limits except for the terminal object.
On the other hand, as for a few categories $\mathcal A$, the terminal crossed $\mathcal A$-group was constructed by hand.
For example, if $\mathcal A=\Delta$ or $\widetilde\Delta$, it is precisely the Hyperoctahedral crossed group $\mathfrak H$; see \cite{FL91}.
The goal of this section is to generalize their results and compute the terminal crossed $\mathcal A$-group for more general $\mathcal A$ including $\nabla$.
Note that, although the arguments are highly abstract, the reader should keep the concrete examples in the mind; such as $\mathcal A=\Delta$,$\widetilde\Delta$, or $\nabla$.

The key idea is \emph{orderings} on objects.

\begin{definition}
Let $\mathcal A$ be a category and $s\in\mathcal A$ an object.
Then, an \emph{internal co-relation} on $s$ is a tuple $(\bar s;\iota_0,\iota_1)$ of an object $\bar s\in\mathcal A$ and a jointly-epimorphic pair $\iota_0,\iota_1:s\to\bar s$ in $\mathcal A$.
By abuse of notation, we often denote it just by $\bar s$.
\end{definition}

If $\bar s$ is an internal co-relation on $s\in\mathcal A$, for each $a\in\mathcal A$, we have a map
\[
(\iota^\ast_0,\iota^\ast_1):\mathcal A(\bar s,a)\to\mathcal A(s,a)\times\mathcal A(s,a)
\]
which is injective since $(\iota_0,\iota_1)$ is jointly-epimorphic.
In other words, $\mathcal A(\bar s,a)$ is a (binary) \emph{relation} on $\mathcal A(s,a)$.
Explicitly, for two morphisms $\alpha_0,\alpha_1:s\rightrightarrows a$, they are connected by the relation if and only if there is a morphism $\bar\alpha:\bar s\to a$ such that $\alpha_i=\bar\alpha\iota_i$ for $i=0,1$.
Hence, every morphism $a\to b\in\mathcal A$ induces a map $\mathcal A(s,a)\to\mathcal A(s,b)$ which preserves the relation induced by $\bar s$.

\begin{definition}
Let $\mathcal A$ be a category.
An \emph{internal well-co-order} on an object $s\in\mathcal A$ is an internal co-relation $\bar s$ on $s$ such that the set $\mathcal A(s,a)$ is well-ordered by the induced relation for every $a\in\mathcal A$.
\end{definition}

\begin{example}
\label{ex:ord-well-coord}
Let $\mathbf{Ord}$ be the category of well-ordered sets and order-preserving maps.
In particular, $\mathbf{Ord}$ contains all the finite ordinals $\underline n=\{0,\dots,n-1\}$.
We have an obvious internal well-co-order on $\underline 1$ as
\[
\iota_0,\iota_1:\underline 1\rightrightarrows\underline 2\ .
\ ;\quad 0\mapsto 0,1
\]
Hence, every full subcategory of $\mathbf{Ord}$ containing the diagram isomorphic to the above one, such as $\Delta$ and $\widetilde\Delta$, $\underline 1$ is an internally well-co-ordered object.
Namely, the generators $[0]\in\Delta$ and $\langle1\rangle\in\widetilde\Delta$ are canonically well-co-ordered objects respectively in the way above.
\end{example}

\begin{example}
\label{ex:nabla-well-coord}
The object $\double\langle1\double\rangle\in\nabla$ admits a canonical internal co-order:
\[
\iota_0,\iota_1:\double\langle1\double\rangle\to\double\langle2\double\rangle
\ ;\quad 1\mapsto 1,2\ .
\]
It is easily verified that, for each $n\in\mathbb N$, the set $\nabla(\double\langle1\double\rangle,\double\langle n\double\rangle)$ together with the order induced by the internal co-order above is identified with the ordered set $\double\langle n\double\rangle$ itself.
Thus, $\double\langle1\double\rangle$ is a well-co-ordered object.
\end{example}

If $s\in\mathcal A$ admits an internal well-co-order $\bar s$, the corepresentable functor $\mathcal A(s,\blankdot)$ lifts to the functor
\[
\mathcal A(s,\blankdot):\mathcal A\to\mathbf{Ord}\ .
\]
Hence, we can make use of the following splendid property of the category $\mathbf{Ord}$ through this functor.

\begin{lemma}
\label{lem:totord-uniquemap}
Let $\varphi:A\to B$ be an order-preserving map between well-ordered sets such that the inverse image $\varphi^{-1}\{b\}$ is finite for each $b\in B$.
Suppose we are given a permutation $\sigma$ on $A$ with the composition $\varphi\sigma$ again order-preserving.
Then, we have $\varphi\sigma=\varphi$.
\end{lemma}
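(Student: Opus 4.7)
My plan is to proceed by well-founded induction on $A$, proving $\varphi\sigma(a)=\varphi(a)$ for every $a\in A$ under the hypothesis that the equality already holds for all $a'<a$. The two ingredients driving the argument are: first, since $\sigma$ is a bijection on $A$, $|(\varphi\sigma)^{-1}\{b'\}| = |\varphi^{-1}\{b'\}|$ for every $b'\in B$; and second, by the induction hypothesis,
\[
(\varphi\sigma)^{-1}\{b'\}\cap\{a'<a\} \;=\; \varphi^{-1}\{b'\}\cap\{a'<a\}
\]
for every $b'\in B$.

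Fix $a\in A$, set $b:=\varphi(a)$ and $F:=\varphi^{-1}\{b\}$ (finite by hypothesis), and split on the comparison of $\varphi\sigma(a)$ with $b$. If $\varphi\sigma(a) > b$, order-preservation of $\varphi\sigma$ forces $\varphi\sigma(a')>b$ for every $a'\geq a$, so $(\varphi\sigma)^{-1}\{b\}$ lies entirely below $a$ and hence equals $F\cap\{a'<a\}$; combining this with $|(\varphi\sigma)^{-1}\{b\}|=|F|$ and the finiteness of $F$ forces $F\subseteq\{a'<a\}$, contradicting $a\in F$. If instead $\varphi\sigma(a) = c < b$, set $G:=\varphi^{-1}\{c\}$; since $\varphi$ is order-preserving and $\varphi(a)=b>c$, every element of $G$ lies strictly below $a$, so the induction hypothesis yields $G\subseteq(\varphi\sigma)^{-1}\{c\}$, while $a\in(\varphi\sigma)^{-1}\{c\}\setminus G$ gives $|(\varphi\sigma)^{-1}\{c\}|\geq|G|+1$, contradicting $|(\varphi\sigma)^{-1}\{c\}|=|G|$. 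Both strict inequalities being impossible, we are left with $\varphi\sigma(a)=b=\varphi(a)$, which closes the induction.

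The main obstacle, as far as I can see, is nothing conceptual but rather ensuring the finiteness of the fibers of $\varphi$ is actually used at the correct spot: it enters precisely when passing from the cardinality equality $|F|=|F\cap\{a'<a\}|$ to the set-theoretic identity $F=F\cap\{a'<a\}$ in the first case, which of course fails without finiteness (the identity $\varphi=\mathrm{id}_{\mathbb{N}}$ and $\sigma$ the ``shift by one, swap $0$ and the new image'' style permutations show the lemma genuinely requires this hypothesis). Once this point is set up carefully, the rest is pure bookkeeping.
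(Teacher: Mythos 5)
Your proof is correct, and its architecture differs from the paper's in a way worth noting. You run a transfinite induction over the well-ordered set $A$, proving the pointwise statement $\varphi\sigma(a)=\varphi(a)$ from its truth below $a$, and both cases are settled by the single counting identity $\lvert(\varphi\sigma)^{-1}\{b'\}\rvert=\lvert\varphi^{-1}\{b'\}\rvert$ combined with the induction hypothesis. The paper instead takes a minimal ``bad'' fibre: it picks the least $b\in B$ with $\sigma(A_b)\nsubseteq A_b$ (where $A_b=\varphi^{-1}\{b\}$), first rules out a downward jump by a pigeonhole argument inside the finite fibre $A_{\varphi\sigma(a)}$, and then uses the equality $\lvert A_b\setminus\sigma^{-1}(A_b)\rvert=\lvert\sigma^{-1}(A_b)\setminus A_b\rvert$ to produce an element $a'>a$ with $\varphi\sigma(a')=b<\varphi\sigma(a)$, contradicting order-preservation of $\varphi\sigma$. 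So the paper inducts (via minimality) over $B$ and phrases everything as ``$\sigma$ restricts to each fibre,'' whereas you induct over $A$ and work pointwise; your version avoids the symmetric-difference step and treats both cases by the same cardinality mechanism, which is arguably a little more uniform, while the paper's version only invokes the well-ordering of $B$ and uses the well-ordering of $A$ only implicitly through $\mathbf{Ord}$.

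Two small quibbles, neither of which affects correctness. First, finiteness of the fibres is used in \emph{both} of your cases, not only the first: in the second case the contradiction between $\lvert(\varphi\sigma)^{-1}\{c\}\rvert\geq\lvert G\rvert+1$ and $\lvert(\varphi\sigma)^{-1}\{c\}\rvert=\lvert G\rvert$ requires $G$ finite, since for an infinite fibre adding one element does not change the cardinality. Second, your parenthetical example with $\varphi=\mathrm{id}_{\mathbb N}$ does not witness the necessity of the hypothesis, because its fibres are singletons (and indeed the lemma holds there); a genuine witness needs an infinite fibre, e.g.\ $A=\omega+\omega$, $B=\{0,1\}$, $\varphi$ collapsing each copy of $\omega$ to a point, and $\sigma$ a bijection moving the first element of the second copy into the preimage of $0$.
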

\begin{proof}
It suffices to prove the permutation $\sigma$ is restricted to each $A_b:=\varphi^{-1}\{b\}$.
Suppose we have $b\in B$ with $\sigma(A_b)\nsubset A_b$; in particular, we may assume $b$ is the minimum among such elements of $B$ since $B$ is well-ordered.
Take $a\in A_b$ such that $\varphi\sigma(a)\neq b$.
Note that we have $\varphi\sigma(a)>b$; otherwise, the minimality of $b$ implies $\sigma$ restricts to a permutation on $A_{\varphi\sigma(a)}$.
We have $a\notin A_{\varphi\sigma(a)}$ while $\sigma(\{a\}\cup A_{\varphi\sigma(a)})\subset A_{\varphi\sigma(a)}$, which contradicts to the injectivity of $\sigma$ since $A_{\varphi\sigma(a)}$ is finite.

Since $A_b$ and $\sigma^{-1}(A_b)$ are finite sets with the same cardinality, so are two subsets $A_b\setminus\sigma^{-1}(A_b)$ and $\sigma^{-1}(A_b)\setminus A_b$ of $A$.
The first one is non-empty, e.g. containing $a$, so we can take an element $a'\in\sigma^{-1}(A_b)\setminus A_b$.
The minimality of $b$ again implies $\varphi(a')>b=\varphi(a)$ so $a'>a$.
We however have
\[
\varphi\sigma(a')=b<\varphi\sigma(a)\ ,
\]
which contradicts to the assumption that $\varphi\sigma$ preserves the order.
\end{proof}

\begin{corollary}
\label{cor:permact-ord}
Let $A$ and $B$ be finite totally ordered (hence, well-ordered) set.
Then, the permutation group $\mathfrak S(B)$ on $B$ has a left action
\[
\mathfrak S(B)\times\mathbf{Pos}(A,B)\to\mathbf{Pos}(A,B)
\ ;\quad (\sigma,\varphi)\mapsto\varphi^\sigma
\]
on the set $\mathbf{Pos}(A,B)$ of order-preserving maps from $A$ to $B$ so that, for each $\sigma\in\mathfrak S(B)$ and $\varphi\in\mathbf{Pos}(A,B)$, $\varphi^\sigma:A\to B$ is the unique order-preserving map that the composition $\sigma\varphi$ factors through after a (not necessarily unique) permutation on $A$.
\end{corollary}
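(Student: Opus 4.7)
The plan is to split the claim into three tasks: first, construct an order-preserving map $\varphi^\sigma\colon A\to B$ together with a permutation $\tau$ of $A$ satisfying $\sigma\varphi=\varphi^\sigma\tau$; second, verify that $\varphi^\sigma$ is the unique order-preserving map with this property; and third, deduce from this uniqueness that the assignment $(\sigma,\varphi)\mapsto\varphi^\sigma$ is in fact a left action.

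For existence, the key observation is that any function $f\colon A\to B$ from a finite totally ordered set to a totally ordered set admits a \emph{sorting}: there is a unique order-preserving $\tilde f\colon A\to B$ whose image multiset agrees with that of $f$, and correspondingly a (not necessarily unique) permutation $\tau$ of $A$ with $f=\tilde f\tau$. Applying this to $f:=\sigma\varphi$ yields the desired $\varphi^\sigma:=\widetilde{\sigma\varphi}$.

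For uniqueness, suppose $\psi_1,\psi_2\colon A\to B$ are both order-preserving and that $\tau_1,\tau_2$ are permutations of $A$ with $\sigma\varphi=\psi_i\tau_i$ for $i=1,2$. Setting $\rho:=\tau_2\tau_1^{-1}$ gives $\psi_1=\psi_2\rho$, so in particular $\psi_2\rho$ is order-preserving. Since $A$ is finite, the fibres of $\psi_2$ are finite, so \cref{lem:totord-uniquemap} (with the roles of its $\varphi$ and $\sigma$ played here by $\psi_2$ and $\rho$) yields $\psi_2\rho=\psi_2$, hence $\psi_1=\psi_2$.

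Finally, $\varphi^{\mathrm{id}_B}=\varphi$ is trivial because $\varphi$ is already order-preserving, and for $\sigma_1,\sigma_2\in\mathfrak S(B)$ the two maps $(\varphi^{\sigma_2})^{\sigma_1}$ and $\varphi^{\sigma_1\sigma_2}$ are both order-preserving factors of $\sigma_1\sigma_2\varphi$ up to a permutation of $A$ on the right, so the uniqueness step identifies them. The only genuine subtlety is the appeal to finiteness of $A$ when invoking \cref{lem:totord-uniquemap}; this is precisely where the hypothesis that $A$ and $B$ are finite is essentially used, and is the main obstacle one must not forget to discharge.
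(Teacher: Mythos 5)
Your proposal is correct and follows essentially the same route as the paper: existence by sorting $\sigma\varphi$ (the paper phrases this via the join decomposition $A\cong\varphi^{-1}\{\sigma^{-1}(0)\}\star\dots\star\varphi^{-1}\{\sigma^{-1}(n-1)\}$, which is the same construction), uniqueness by applying \cref{lem:totord-uniquemap} to the permutation relating two factorizations, and the action axioms deduced from that uniqueness. Nothing is missing; your explicit verification of $\varphi^{\mathrm{id}_B}=\varphi$ and $(\varphi^{\sigma_2})^{\sigma_1}=\varphi^{\sigma_1\sigma_2}$ just spells out what the paper leaves as ``easily follows.''
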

\begin{proof}
We may assume $B=\underline n=\{0,\dots,n-1\}$.
Since the composition $\sigma\varphi$ factors as
\[
A
\cong \varphi^{-1}\{\sigma^{-1}(0)\}\star\dots\star\varphi^{-1}\{\sigma^{-1}(n-1)\}
\to\underline n\ ,
\]
the existence of $\varphi^\sigma$ is obvious.
We show the uniqueness of $\varphi^\sigma$.
Say $\sigma\varphi=\varphi^\sigma\sigma'$, and suppose we have another factorization $\sigma\varphi=\psi\tau$.
Then, we have $\psi=\varphi^\sigma\sigma'\tau^{-1}$.
Since $A$ is finite, and since both $\varphi^\sigma$ and $\psi$ are order-preserving, \cref{lem:totord-uniquemap} implies $\psi=\varphi^\sigma$.
This guarantees the uniqueness of $\varphi^\sigma$.
Now, the associativity of the action easily follows from the property of $\varphi^\sigma$.
\end{proof}

In the rest of the section, we assume $\mathcal A$ to be a category such that
\begin{enumerate}[label={\rm(\roman*)}]
  \item $\mathcal A$ is locally finite; i.e. each hom-set $\mathcal A(a,b)$ is finite;
  \item it is equipped with a generator $s\in\mathcal A$, so the corepresentable functor $\mathcal A(s,\blankdot):\mathcal A\to\mathbf{Set}$ is by definition faithful;
  \item $s$ is internally well-co-ordered.
\end{enumerate}
As seen in \cref{ex:ord-well-coord} and \cref{ex:nabla-well-coord}, the examples of $\mathcal A$ include $\Delta$, $\widetilde\Delta$, and $\nabla$.

\begin{remark}
\label{rem:corel-refl-rep}
If $\mathcal A$ is a category satisfying the conditions above, the relation on each $\mathcal A(s,a)$ induced by the internal co-relation $\bar s$ is reflexive.
This means that every morphism $\alpha:s\to a$ determines a morphism $\bar\alpha:\bar s\to a$ such that $\bar\alpha\iota_0=\bar\alpha\iota_1=\alpha$.
In addition, since $\iota_0$ and $\iota_1$ are jointly epimorphic, such $\bar\alpha$ is unique.
Hence, we obtain a unique map
\begin{equation}
\label{eq:corel-refl}
\mathtt{refl}_a:\mathcal A(s,a)\to\mathcal A(\bar s,a)
\end{equation}
which is a common section of the precomposition maps with $\iota_0$ and $\iota_1$.
It is easily verified that $\mathsf{refl}_a$ is natural with respect to $a\in\mathcal A$, and Yoneda Lemma implies we have a map $\bar s\to s$ representing $\mathtt{refl}$.
Thus, for every morphism $\varphi:a\to b$, we have
\[
\varphi_\ast(\mathtt{refl}(\alpha)) = \mathtt{refl}(\varphi_\ast(\alpha))\ .
\]
\end{remark}

For $\mathcal A$ above, note that we can think of $\mathcal A(a,b)$ as a subset of the set
\[
\mathbf{Pos}(\mathcal A(s,a),\mathcal A(s,b))
\]
of order-preserving maps on which the group $\mathfrak S(\mathcal A(s,a))$ acts from the left.
We define a group $\mathfrak S_{\mathcal A}(a)$ by
\[
\mathfrak S_{\mathcal A}(a)
:= \left\{\sigma\in\mathfrak S(\mathcal A(s,a))\mid\forall b\in\mathcal A:\sigma(\mathcal A(b,a))\subset\mathcal A(b,a)\right\}\ .
\]
In particular, $\mathfrak S_{\mathcal A}(a)$ acts on $\mathcal A(b,a)$ from the left for each $b\in\mathcal A$.

\begin{example}
\label{eq:symg-wideDelta}
If $\mathcal A$ is a full subcategory of $\widetilde\Delta$ containing $\langle 1\rangle$ and $\langle 2\rangle$, and if we take $\langle 1\rangle$ as the generator with the canonical co-well-order, then we have
\[
\mathfrak S_{\mathcal A}(\langle n\rangle)
=\mathfrak S(\mathcal A(\langle 1\rangle,\langle n\rangle))
\cong\mathfrak S_n\ .
\]
In particular,
\[
\mathfrak S_{\widetilde\Delta}(\langle n\rangle)\cong\mathfrak S_n
\ ,\quad \mathfrak S_\Delta([n])\cong\mathfrak S_{n+1}\ .
\]
\end{example}

\begin{example}
\label{ex:symg-nabla}
In the case $\mathcal A=\nabla$, the evaluation map
\[
\nabla(\double\langle 1\double\rangle,\double\langle n\double\rangle)
\to\double\langle n\double\rangle
\ ;\quad\alpha\mapsto\alpha(1)
\]
is bijective, so we identify the two sets by the map.
We claim
\begin{equation}
\label{eq:ex:symg-nabla:assert}
\mathfrak S_\nabla(\double\langle n\double\rangle)
=\mathfrak S(\langle n\rangle)\times\mathfrak S(\{-\infty,\infty\})
\end{equation}
for each $n\in\mathbb N$ respecting the action on $\double\langle n\double\rangle$.
For the left-to-right inclusion, it suffices to see the subset $\{-\infty,\infty\}\subset\double\langle n\double\rangle$ is stable under the action of $\mathfrak S_\nabla(\double\langle n\double\rangle)$.
This follows from the observation that the set is the image of the unique map $\double\langle 0\double\rangle\to\double\langle n\double\rangle\in\nabla$.
To see the other direction, note that the action of $\mathfrak S(\langle n\rangle)$ on $\nabla(\double\langle m\double\rangle,\double\langle n\double\rangle)$ is given as follows: for $(\sigma,\varepsilon)\in\mathfrak S(\langle n\rangle)\times\mathfrak S(\{-\infty,\infty\})$, and for $\varphi\in\nabla(\double\langle m\double\rangle,\double\langle n\double\rangle)$, $\varphi^{(\sigma,\varepsilon)}$ is the map
\begin{equation}
\label{eq:ex:symg-nabla:act}
\begin{split}
\double\langle m\double\rangle
&\cong \varphi^{-1}\{\varepsilon(-\infty)\}\star\varphi^{-1}\{\sigma^{-1}(1)\}\star\dots\star\varphi^{-1}\{\sigma^{-1}(n)\}\star\varphi^{-1}\{\varepsilon(\infty)\} \\
&\to \{-\infty\}\star\{1\}\star\dots\star\{n\}\star\{\infty\}
\cong\double\langle n\double\rangle\ .
\end{split}
\end{equation}
Since the sets $\varphi^{-1}\{\varepsilon(-\infty)\}$ and $\varphi^{-1}\{\varepsilon(\infty)\}$ are, no matter what $\varepsilon$ is, non-empty, \eqref{eq:ex:symg-nabla:act} is a map in $\nabla$ for every pair $(\sigma,\varepsilon)$.
This shows the right hand side in \eqref{eq:ex:symg-nabla:assert} is contained in the left.
Hence, we obtain the result.
\end{example}

For each crossed $\mathcal A$-group $G$, the action of $G(a)$ on $\mathcal A(s,a)$ determines a group homomorphism
\begin{equation}
\label{eq:crsgrp-perm}
R_a:G(a)\to\mathfrak S_{\mathcal A}(a)
\end{equation}

\begin{lemma}
\label{lem:act-perm}
Let $\mathcal A$ be as above, and let $G$ be a crossed $\mathcal A$-group.
Then, the action of $G(a)$ on each $\mathcal A(b,a)$ factors through that of $\mathfrak S_{\mathcal A}(a)$ via the group homomorphism $R_a$ given in \eqref{eq:crsgrp-perm}.
\end{lemma}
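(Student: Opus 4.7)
The plan is to establish in one stroke two claims: (a) that for every $x\in G(a)$ the permutation $R_a(x)$ of $\mathcal A(s,a)$ lies in the subgroup $\mathfrak S_{\mathcal A}(a)$; and (b) that the resulting $\mathfrak S_{\mathcal A}(a)$-action on $\mathcal A(b,a)$ coincides with the crossed-group action $\varphi\mapsto\varphi^x$. Both will follow from the uniqueness clause of \cref{cor:permact-ord} together with a one-line unwinding of \ref{condCG:gdist}.

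The main step will be to fix $x\in G(a)$ and $\varphi:b\to a$, identify the latter with its postcomposition map $\varphi_\ast:\mathcal A(s,b)\to\mathcal A(s,a)$, and apply \ref{condCG:gdist} to $(\varphi\beta)^x$ for arbitrary $\beta:s\to b$. This will produce the equality
\[
R_a(x)\circ\varphi_\ast \;=\; (\varphi^x)_\ast\circ R_b(\varphi^\ast(x))
\]
of maps $\mathcal A(s,b)\to\mathcal A(s,a)$. Here $R_b(\varphi^\ast(x))$ is a permutation of the source and $(\varphi^x)_\ast$ is order-preserving, so the right-hand side is exactly the factorization required in the defining property of the action in \cref{cor:permact-ord}. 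Invoking the uniqueness clause there, I conclude $\varphi^{R_a(x)}=(\varphi^x)_\ast$, which simultaneously says that $R_a(x)$ carries $\mathcal A(b,a)\subset\mathbf{Pos}(\mathcal A(s,b),\mathcal A(s,a))$ into itself and that the induced action recovers $\varphi\mapsto\varphi^x$.

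I do not foresee any real obstacle here: the argument is essentially a translation of \ref{condCG:gdist} into the language of permutation actions, and \cref{cor:permact-ord} reduces the matter to formal uniqueness. The only point to be careful about will be the variance bookkeeping---keeping $R_a(x)$ on the target side of the composite and $R_b(\varphi^\ast(x))$ on the source side---so that the identity matches the factorization pattern demanded by \cref{cor:permact-ord}.
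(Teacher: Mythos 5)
Your proposal is correct and is essentially the paper's own argument: the identity $R_a(x)\circ\varphi_\ast=(\varphi^x)_\ast\circ R_b(\varphi^\ast(x))$ obtained from \ref{condCG:gdist} is exactly the commutative square in the paper's proof, and the conclusion $\varphi^{R_a(x)}=\varphi^x$ is drawn in the same way from the uniqueness clause of \cref{cor:permact-ord}. The only (harmless) addition is that you also record explicitly that this forces $R_a(x)$ to preserve each $\mathcal A(b,a)$, a point the paper leaves implicit when asserting that $R_a$ lands in $\mathfrak S_{\mathcal A}(a)$ in \eqref{eq:crsgrp-perm}.
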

\begin{proof}
For each $x\in G(a)$ and $\varphi:b\to a\in\mathcal A$, we have the following commutative square:
\[
\xymatrix{
  \mathcal A(s,b) \ar[r]^{\varphi_\ast} \ar[d]_{f^\ast(x)} & \mathcal A(s,a) \ar[d]^{x=R_a(x)} \\
  \mathcal A(s,b) \ar[r]^{\varphi^x_\ast} & \mathcal A(s,a) }
\]
By the definition of the action $\mathfrak S_{\mathcal A}(a)$ on $\mathcal A(b,a)$, which is essentially given in \cref{cor:permact-ord}, we have $\varphi^x=\varphi^{R_a(x)}$.
This is exactly the required result.
\end{proof}

\begin{lemma}
\label{lem:crsgrp-fibord}
Let $\mathcal A$ be as above, and let $G$ be a crossed $\mathcal A$-group.
Suppose $x\in G(a)$ and $\alpha\in\mathcal A(s,a)$.
Then, for every morphism $\varphi:b\to a\in\mathcal A$, the map
\[
\varphi^\ast(x):\mathcal A(s,b)\to\mathcal A(s,b)
\]
restricts to a map $(\varphi_\ast)^{-1}\{\alpha\}\to (\varphi^x_\ast)^{-1}\{\alpha^x\}$ which is either order-preserving or order-reversing, depending only on $x$ and $\alpha$ but not on $\varphi$.
\end{lemma}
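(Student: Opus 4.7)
The plan is to use the internal well-co-order $\bar s$ to isolate a single element of $G(\bar s)$ depending only on $x$ and $\alpha$ that governs the restricted map's orientation. First, \ref{condCG:gdist} applied to the composite $\varphi\beta$ yields $\alpha^x=\varphi^x\beta^{\varphi^\ast(x)}$ whenever $\beta\in(\varphi_\ast)^{-1}\{\alpha\}$, so the action of $\varphi^\ast(x)\in G(b)$ on $\mathcal A(s,b)$ does carry $(\varphi_\ast)^{-1}\{\alpha\}$ into $((\varphi^x)_\ast)^{-1}\{\alpha^x\}$ as claimed.

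Now fix $\beta_0<\beta_1$ in $(\varphi_\ast)^{-1}\{\alpha\}$. The well-co-order supplies a unique $\bar\beta:\bar s\to b$ with $\bar\beta\iota_i=\beta_i$, and the coincidence $\varphi\beta_0=\varphi\beta_1=\alpha$ combined with \cref{rem:corel-refl-rep} forces $\varphi\bar\beta=\mathtt{refl}_a(\alpha)=\alpha r$, where $r:\bar s\to s$ is the canonical representing map. Applying \ref{condCG:gdist} to $\bar\beta\iota_i$ and using the contravariance of the $\mathcal A$-set structure on $G$, I compute
\[
\beta_i^{\varphi^\ast(x)}=(\bar\beta\iota_i)^{\varphi^\ast(x)}=\bar\beta^{\varphi^\ast(x)}\cdot\iota_i^{\bar\beta^\ast\varphi^\ast(x)}=\bar\beta^{\varphi^\ast(x)}\cdot\iota_i^{y},
\]
where $y:=r^\ast(\alpha^\ast(x))\in G(\bar s)$, since $\bar\beta^\ast\varphi^\ast(x)=(\varphi\bar\beta)^\ast(x)=r^\ast\alpha^\ast(x)$. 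The element $y$ depends only on $x$ and $\alpha$, not on $\varphi$ or on the chosen pair.

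Define $\mathtt{sgn}(x,\alpha):=+$ if $\iota_0^y<\iota_1^y$ in the well-ordered set $\mathcal A(s,\bar s)$ and $\mathtt{sgn}(x,\alpha):=-$ otherwise; the two elements $\iota_0^y,\iota_1^y$ are distinct since $y$ acts by permutation and $\iota_0\neq\iota_1$, hence strictly comparable. Postcomposition with $\bar\beta^{\varphi^\ast(x)}$ gives an order-preserving map $\mathcal A(s,\bar s)\to\mathcal A(s,b)$ whose values on $\iota_0^y,\iota_1^y$ are the distinct elements $\beta_0^{\varphi^\ast(x)},\beta_1^{\varphi^\ast(x)}$, so the strict comparison of the latter agrees with $\mathtt{sgn}(x,\alpha)$. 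Since this sign is independent of $\varphi$ and of the chosen pair, the restriction is uniformly order-preserving or uniformly order-reversing according to its value. The main subtlety is the bookkeeping with \ref{condCG:gdist} and the contravariance of $G$ that collapses all $\varphi$-dependence into the outer order-preserving postcomposition, leaving the orientation controlled by the action of the single element $y\in G(\bar s)$ on the pair $(\iota_0,\iota_1)$.
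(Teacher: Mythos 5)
Your proof is correct and follows essentially the same route as the paper's: lift the pair $\beta_0<\beta_1$ in the fiber to $\bar\beta:\bar s\to b$, note $\varphi\bar\beta=\mathtt{refl}(\alpha)$, and use \ref{condCG:gdist} to factor $\beta_i^{\varphi^\ast(x)}=\bar\beta^{\varphi^\ast(x)}\iota_i^{\,\mathtt{refl}(\alpha)^\ast(x)}$, so the orientation is governed solely by the element $y=\mathtt{refl}(\alpha)^\ast(x)=r^\ast\alpha^\ast(x)$, which depends only on $x$ and $\alpha$. You additionally spell out the (implicit in the paper) check that the action lands in the fiber over $\alpha^x$ and the strictness bookkeeping, which is fine.
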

\begin{proof}
Let $\varphi:b\to a\in\mathcal A$ be an arbitrary morphism, so we have a map $\varphi_\ast:\mathcal A(s,b)\to\mathcal A(s,a)$.
Take any two elements $\psi_0,\psi_1\in\mathcal A(s,b)$ with $\varphi_\ast(\psi_0)=\varphi_\ast(\psi_1)=\alpha$.
We may assume $\psi_0\le \psi_1$ so there is a morphism $\bar\psi:\bar s\to b$ with $\bar\psi\iota_0=\psi_0$ and $\bar\psi\iota_1=\psi_1$.
Since $\varphi\bar\psi\iota_0=\varphi\bar\psi\iota_1=\alpha$, we have $\varphi\bar\psi=\mathtt{refl}(\alpha)$, where $\mathtt{refl}$ is the map defined in \cref{rem:corel-refl-rep}, and the following diagram is commutative:
\[
\xymatrix{
  \mathcal A(s,\bar s) \ar[r]^{\bar\psi_\ast} \ar[d]_{\mathtt{refl}(\alpha)^\ast(x)} & \mathcal A(s,b) \ar[r]^{\varphi_\ast} \ar[d]_{\varphi^\ast(x)} & \mathcal A(s,a) \ar[d]^x \\
  \mathcal A(s,\bar s) \ar[r]^{\bar\psi^{\varphi^\ast(x)}} & \mathcal A(s,b) \ar[r]^{\varphi^x_\ast} & \mathcal A(s,a) }
\]
Hence, the order of two elements $\varphi^\ast(x)(\psi_0),\varphi^\ast(x)(\psi_1)\in\mathcal A(s,b)$ is determined by that of $\mathtt{refl}(\alpha)^\ast(x)(\iota_0)$ and $\mathtt{refl}(\alpha)^\ast(x)(\iota_1)$.
It clearly no longer depends on $\varphi$ nor elements $\psi_0,\psi_1\in (\varphi_\ast)^{-1}\{\alpha\}$, so we obtain the result.
\end{proof}

We now define a candidate for the terminal crossed $\mathcal A$-group.
For each morphism $\varphi:b\to a\in\mathcal A$, we define a map
\[
\beta_\varphi:C_2^{\times\mathcal A(s,a)}\to\mathfrak S(\mathcal A(s,b))
\]
as follows: for $\vec\varepsilon=(\varepsilon_i)_{i\in\mathcal A(s,a)}$, $\beta_\varphi(\vec\varepsilon)\in\mathfrak S(\mathcal A(s,b))$ is the unique permutation such that
\begin{itemize}
  \item it preserves the fibers of the map $\varphi_\ast:\mathcal A(s,b)\to\mathcal A(s,a)$;
  \item for each $i\in\mathcal A(s,a)$, the map $\beta_\varphi(\vec\varepsilon):(\varphi_\ast)^{-1}\{i\}\to(\varphi_\ast)^{-1}\{i\}$ is either order-preserving or order-reversing according to $\varepsilon_i\in C_2$.
\end{itemize}
The map $\beta_\varphi$ is obviously a group homomorphism.
Also, we define a map
\[
\varphi^\ast:\mathfrak S_{\mathcal A}(a)\to\mathfrak S(\mathcal A(s,b))
\]
so that $\varphi^\ast(\sigma)$ is the unique permutation satisfying
\begin{enumerate}[label={\rm(\roman*)}]
  \item the square below is commutative:
\[
\xymatrix{
  \mathcal A(s,b) \ar[r]^{\varphi_\ast} \ar[d]_{\varphi^\ast(\sigma)} & \mathcal A(s,a) \ar[d]^\sigma \\
  \mathcal A(s,b) \ar[r]^{\varphi^\sigma_\ast} & \mathcal A(s,a) }
\]
  \item $\varphi^\ast(\sigma):\mathcal A(s,b)\to\mathcal A(s,b)$ restricts to an order-preserving map
\[
\varphi^\ast(\sigma):(\varphi_\ast)^{-1}\{i\}\to (\varphi_\ast^x)^{-1}\{i^x\}
\]
for each $i\in\mathcal A(s,a)$.
\end{enumerate}
The action of $\mathfrak S_{\mathcal A}(a)$ on $\mathcal A(s,b)$ enables us to consider the semidirect product $\mathfrak S_{\mathcal A}(a)\ltimes C_2^{\times\mathcal A(s,a)}$, which is just a cartesian product $\mathfrak S_{\mathcal A}(a)\times C_2^{\times\mathcal A(s,a)}$ as a set together with the multiplication
\[
\left(\sigma;(\varepsilon_i)_{i\in\mathcal A(s,a)}\right)\left(\tau;(\zeta_i)_{i\in\mathcal A(s,a)}\right)
= \left(\sigma\tau;(\varepsilon_i\zeta_{\tau(i)})_{i\in\mathcal A(s,a)}\right)\ .
\]
We define
\begin{equation}
\label{eq:def-Weyl}
\mathfrak W_{\mathcal A}(a)
:= \left\{(\sigma;\vec\varepsilon)\in \mathfrak S_{\mathcal A}(a)\ltimes C_2^{\times\mathcal A(s,a)}\ \middle|\ \forall \varphi\in\mathcal A(b,a):\varphi^\ast(\sigma)\beta_\varphi(\vec\varepsilon)\in\mathfrak S_{\mathcal A}(b)\right\}\ .
\end{equation}

\begin{remark}
\label{rem:fast-char}
The permutation $\tau=\varphi^\ast(\sigma)\beta_\varphi(\vec\varepsilon)$ on $\mathcal A(s,b)$ appearing in \eqref{eq:def-Weyl} of the definition of $\mathfrak W_{\mathcal A}(a)$ is the permutation characterized by the following two properties:
\begin{enumerate}[label={\rm(\roman*)}]
  \item the square below is commutative:
\[
\xymatrix{
  \mathcal A(s,b) \ar[r]^{\varphi_\ast} \ar[d]_{\tau} & \mathcal A(s,a) \ar[d]^\sigma \\
  \mathcal A(s,b) \ar[r]^{\varphi^\sigma_\ast} & \mathcal A(s,a) }
\]
  \item the restriction $\tau:(\varphi_\ast)^{-1}\{i\}\to (\varphi^\sigma_\ast)^{-1}\{i^\sigma\}$ is either order-preserving or order-reversing according to $\varepsilon_i\in C_2$.
\end{enumerate}
\end{remark}

\begin{proposition}
\label{prop:Weyl-grp}
Let $\mathcal A$ be as above.
Then, the subset $\mathfrak W_{\mathcal A}(a)\subset \mathfrak S_{\mathcal A}(a)\ltimes C_2^{\times\mathcal A(s,a)}$ given in \eqref{eq:def-Weyl} has the following properties:
\begin{enumerate}[label={\rm(\arabic*)}]
  \item\label{req:Weyl-grp:subgrp} $\mathfrak W_{\mathcal A}(a)$ is closed under multiplication and the inverses; hence it is a subgroup;
  \item\label{req:Weyl-grp:presh} for each morphism $\varphi:b\to a\in\mathcal A$, define
\[
\varphi^\ast:\mathfrak W_{\mathcal A}(a)\to\mathfrak W_{\mathcal A}(b)
\ ;\quad (\sigma;\vec\varepsilon)\mapsto\left(\varphi^\ast(\sigma)\beta_\varphi(\vec\varepsilon);\varphi^\ast(\vec\varepsilon)\right)\ .
\]
Then, this defines a structure of $\mathcal A$-sets on $\mathfrak W_{\mathcal A}$.
\end{enumerate}
Moreover, the family $\{\mathfrak W_{\mathcal A}(a)\}_{a\in\mathcal A}$ forms a crossed $\mathcal A$-group.
\end{proposition}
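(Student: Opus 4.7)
The strategy is to lean on the uniqueness characterization in \cref{rem:fast-char}: every permutation of $\mathcal A(s,b)$ of the form $\varphi^\ast(\sigma)\beta_\varphi(\vec\varepsilon)$ is determined entirely by the covering square it fits into over $\varphi_\ast$ together with the fiber parities it realizes. This lets most of the required identifications be made abstractly, by matching covering data, rather than by explicit computation of permutations.

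For \cref{req:Weyl-grp:subgrp}, fix $(\sigma_1;\vec\varepsilon_1),(\sigma_2;\vec\varepsilon_2)\in\mathfrak W_{\mathcal A}(a)$ and $\varphi:b\to a$. Setting $\tau_2:=\varphi^\ast(\sigma_2)\beta_\varphi(\vec\varepsilon_2)$ and $\tau_1:=(\varphi^{\sigma_2})^\ast(\sigma_1)\beta_{\varphi^{\sigma_2}}(\vec\varepsilon_1)$, hypothesis places both $\tau_1,\tau_2$ in $\mathfrak S_{\mathcal A}(b)$, whence $\tau_1\tau_2\in\mathfrak S_{\mathcal A}(b)$. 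Vertically stacking the covering squares for $\tau_2$ (above $\varphi_\ast$) and for $\tau_1$ (above $\varphi^{\sigma_2}_\ast$) shows that $\tau_1\tau_2$ covers $\sigma_1\sigma_2$ over $\varphi_\ast$, and tracking fiber parities across the two stages yields the twisted product $\sigma_2^\ast(\vec\varepsilon_1)\cdot\vec\varepsilon_2$. Uniqueness in \cref{rem:fast-char} then forces $\tau_1\tau_2=\varphi^\ast(\sigma_1\sigma_2)\beta_\varphi(\sigma_2^\ast(\vec\varepsilon_1)\cdot\vec\varepsilon_2)$, so this element of $\mathfrak S(\mathcal A(s,b))$ lies in $\mathfrak S_{\mathcal A}(b)$, and therefore the product $(\sigma_1;\vec\varepsilon_1)(\sigma_2;\vec\varepsilon_2)$ lies in $\mathfrak W_{\mathcal A}(a)$. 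Closure under inverses follows the same pattern, since $\tau^{-1}$ reverses both the covering direction and the fiber parities, matching inversion in the semidirect product.

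For \cref{req:Weyl-grp:presh}, the identity $\mathrm{id}^\ast=\mathrm{id}$ is immediate and the $C_2$-component of $(\psi\varphi)^\ast=\varphi^\ast\psi^\ast$ is a routine reindexing. The $\mathfrak S$-component compares $(\psi\varphi)^\ast(\sigma)\beta_{\psi\varphi}(\vec\varepsilon)$ with the two-step composite $\varphi^\ast(\psi^\ast(\sigma)\beta_\psi(\vec\varepsilon))\beta_\varphi(\psi^\ast(\vec\varepsilon))$; recasting both over the coarser fibration $(\psi\varphi)_\ast$ and checking they cover $\sigma$ with matching fiber parities $\vec\varepsilon$ closes the case by uniqueness. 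The crossed $\mathcal A$-group structure is then obtained by equipping each $\mathcal A(b,a)$ with the left action of $\mathfrak W_{\mathcal A}(a)$ through the projection $(\sigma;\vec\varepsilon)\mapsto\sigma$ and \cref{cor:permact-ord}; axiom \cref{condCG:cdist} is the multiplicativity already established, while axiom \cref{condCG:gdist} reduces, after unpacking, to the identity $\varphi^{\psi^\ast(\sigma)\beta_\psi(\vec\varepsilon)}=\varphi^{\psi^\ast(\sigma)}$ for $\varphi:b\to a$, $\psi:a\to c$, and $(\sigma;\vec\varepsilon)\in\mathfrak W_{\mathcal A}(c)$.

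The delicate point, and the only place where the defining condition of $\mathfrak W_{\mathcal A}$ is used essentially rather than formally, is precisely that last identity — equivalently, the claim that $\beta_\psi(\vec\varepsilon)$ stabilizes the image of $\varphi_\ast$ inside each fiber of $\psi_\ast$. I expect to extract this by instantiating the $\mathfrak W$-membership of $(\sigma;\vec\varepsilon)$ not at $\psi$ alone but at the composite $\psi\varphi:b\to c$, and unpacking the resulting constraint that $(\psi\varphi)^\ast(\sigma)\beta_{\psi\varphi}(\vec\varepsilon)$ preserve $\mathcal A(b,a)$; this forces the fiberwise stabilization above, after which a final appeal to \cref{rem:fast-char}'s uniqueness simultaneously closes both the functoriality check and axiom \cref{condCG:gdist}.
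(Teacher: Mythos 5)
Your parts (1) and (2) are essentially the paper's own proof: stack the two covering squares, read off the covering datum and the fiber parities, and invoke the uniqueness characterization of \cref{rem:fast-char}; the paper likewise finishes by observing that the identities established in (1) and (2) are literally the crossed-group axioms \ref{condCG:cdist} and \ref{condCG:gdist}. The problem is your last paragraph. With your labels ($\varphi:b\to a$, $\psi:a\to c$, $x=(\sigma;\vec\varepsilon)\in\mathfrak W_{\mathcal A}(c)$), axiom \ref{condCG:gdist} reads $(\psi\varphi)^\sigma=\psi^\sigma\,\varphi^{\tau}$ with $\tau:=\psi^\ast(\sigma)\beta_\psi(\vec\varepsilon)$, and this holds \emph{directly}: $\sigma\circ(\psi\varphi)_\ast=\psi^\sigma_\ast\circ\tau\circ\varphi_\ast$, the map $\tau\circ\varphi_\ast$ factors as $\varphi^\tau_\ast$ after a permutation of $\mathcal A(s,b)$ by \cref{cor:permact-ord}, and $\psi^\sigma\varphi^\tau$ is again a morphism of $\mathcal A$ (here the membership condition of $x$ at $\psi$ is used, to get $\tau\in\mathfrak S_{\mathcal A}(a)$ and hence $\varphi^\tau\in\mathcal A(b,a)$); \cref{lem:totord-uniquemap} then forces $(\psi\varphi)^\sigma=\psi^\sigma\varphi^\tau$ with the \emph{full} $\tau$. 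No cancellation of $\beta_\psi(\vec\varepsilon)$ is needed, nor is it available.

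Indeed, the identity you single out as the delicate point, $\varphi^{\psi^\ast(\sigma)\beta_\psi(\vec\varepsilon)}=\varphi^{\psi^\ast(\sigma)}$ (equivalently, that $\beta_\psi(\vec\varepsilon)$ stabilizes the image of $\varphi_\ast$ inside each fiber of $\psi_\ast$), is false in general, so no unpacking of the membership condition can yield it. Take $\mathcal A=\widetilde\Delta$, $x=(\mathrm{id};-1)\in\mathfrak W_{\widetilde\Delta}(\langle1\rangle)\cong H_1$, $\psi:\langle2\rangle\to\langle1\rangle$ the unique map and $\varphi:\langle1\rangle\to\langle2\rangle$ hitting the bottom element: then $\beta_\psi(\vec\varepsilon)$ is the transposition of the two-element fiber, so $\varphi^{\psi^\ast(\sigma)\beta_\psi(\vec\varepsilon)}$ hits the top element while $\varphi^{\psi^\ast(\sigma)}=\varphi$; yet \ref{condCG:gdist} holds because $\psi^\sigma$ collapses the discrepancy. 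Your claimed equivalence of \ref{condCG:gdist} with that identity fails for exactly this reason: it would only follow after cancelling $\psi^\sigma_\ast$, which need not be injective. Your reading of the membership constraint is also off: $(\psi\varphi)^\ast(\sigma)\beta_{\psi\varphi}(\vec\varepsilon)\in\mathfrak S_{\mathcal A}(b)$ means this permutation of $\mathcal A(s,b)$ carries each subset $\mathcal A(c',b)$ into itself under the action of \cref{cor:permact-ord}; it says nothing like ``preserves $\mathcal A(b,a)$''. The genuine role of membership at composites is the one the paper assigns it: together with the functoriality identity you prove in part (2), it shows that $\psi^\ast$ really lands in $\mathfrak W_{\mathcal A}(a)$ (well-definedness of the $\mathcal A$-set structure); once that is secured, \ref{condCG:gdist} is just the diagram of part (2) re-read, and \ref{condCG:cdist} that of part (1), as you correctly say for the latter.
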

\begin{proof}
We first show \ref{req:Weyl-grp:subgrp}.
Recall that, for $(\sigma;\vec\varepsilon),(\tau;\vec\zeta)\in\mathfrak W_{\mathcal A}(a)$, their multiplication is given by
\[
(\tau;\vec\zeta)(\sigma;\vec\varepsilon)
= (\tau\sigma;\sigma^\ast(\zeta)\cdot\vec\varepsilon)\ .
\]
On the other hand, for every morphism $\varphi:b\to a\in\mathcal A$, we have the following commutative diagram:
\[
\xymatrix{
  \mathcal A(s,b) \ar[r]^{\varphi_\ast} \ar[d]_{\varphi^\ast(\sigma)\beta_\varphi(\vec\varepsilon)} & \mathcal A(s,a) \ar[d]^\sigma \\
  \mathcal A(s,b) \ar[r]^{\varphi^\sigma_\ast} \ar[d]_{(\varphi^\sigma)^\ast(\tau)\beta_{\varphi^\sigma}(\vec\zeta)} & \mathcal A(s,a) \ar[d]^\tau \\
  \mathcal A(s,b) \ar[r]^{\varphi^{\tau\sigma}_\ast} & \mathcal A(s,a) }
\]
Verifying the conditions in \cref{rem:fast-char}, one will see the left vertical composition coincides with the permutation $\varphi^\ast(\tau\sigma)\beta_\varphi(\vec\zeta\cdot\sigma^\ast(\vec\varepsilon))$.
It follows that $(\vec\zeta;\tau)(\vec\varepsilon;\sigma)\in\mathfrak W_{\mathcal A}(a)$.
Closedness under the inverses is proved similarly.

As for the part \ref{req:Weyl-grp:presh}, note that the map $\varphi^\ast:\mathfrak W_{\mathcal A}(a)\to\mathfrak W_{\mathcal A}(b)$ is actually well-defined.
Indeed, for every morphism $\psi:c\to b\in\mathcal A$, we have a commutative diagram below.
\[
\xymatrix@C=5em{
  \mathcal A(s,c) \ar[r]^{\psi_\ast} \ar[d]_{\psi^\ast(\varphi^\ast(\sigma)\beta_\varphi(\vec\varepsilon))\beta_\psi(\varphi^\ast(\vec\varepsilon))} & \mathcal A(s,b) \ar[r]^{\varphi_\ast} \ar[d]^{\varphi^\ast(\sigma)\beta_\varphi(\vec\varepsilon)} & \mathcal A(s,a) \ar[d]^\sigma \\
  \mathcal A(s,c) \ar[r]^{\psi^{\varphi^\ast(\sigma)\beta_\varphi(\vec\varepsilon)}} & \mathcal A(s,b) \ar[r]^{\varphi^\sigma} & \mathcal A(s,a) }
\]
Then, similarly to the part \ref{req:Weyl-grp:subgrp}, one can check the conditions in \cref{rem:fast-char} to see the left vertical arrow equals to the permutation $(\varphi\psi)^\ast(\sigma)\beta_{\varphi\psi}(\vec\varepsilon)$, which belongs to $\mathfrak S_{\mathcal A}(c)$ since $(\sigma;\vec\varepsilon)\in\mathfrak W_{\mathcal A}(a)$.
This argument also shows the functoriality, so $\mathfrak W_{\mathcal A}$ is an $\mathcal A$-set.

It remains to show $\mathfrak W_{\mathcal A}$ is a crossed $\mathcal A$-group.
However, one can notice the proofs of \ref{req:Weyl-grp:subgrp} and \ref{req:Weyl-grp:presh} above also show $\mathfrak W_{\mathcal A}$ satisfies the two conditions of crossed groups respectively.
\end{proof}

Let $\mathcal A$ be as above, and let $G$ be a crossed $\mathcal A$-group.
We have a group homomorphism $R_a:G(a)\to\mathfrak S_{\mathcal A}(a)$ given in \eqref{eq:crsgrp-perm}.
Unfortunately, it is not a map of $\mathcal A$-sets in general.
Nevertheless, it turns out that the only obstruction is \emph{parities}, so we can put that information on the codomain of $R_a$.
More precisely, define a map $\vec\varepsilon_a:G(a)\to C_2^{\times\mathcal A(s,a)}$ as follows: recall that for each $x\in G(a)$ and each $\alpha\in\mathcal A(s,a)$, the square below is commutative:
\[
\xymatrix@C=4em{
  \mathcal A(s,\bar s) \ar[r]^{\mathtt{refl}(\alpha)_\ast} \ar[d]_{\mathtt{refl}(\alpha)^\ast(x)} & \mathcal A(s,a) \ar[d]^x \\
  \mathcal A(s,\bar s) \ar[r]^{\mathtt{refl}(\alpha^x)_\ast} & \mathcal A(s,a) }
\]
We set $\vec\varepsilon_a(x)= \left(\varepsilon_a(x)_\alpha\right)_{\alpha\in\mathcal A(s,a)}$ by
\[
\varepsilon_a(x)_\alpha :=
\begin{cases}
1 &\quad \mathtt{refl}(\alpha)^\ast(x)(\iota_0)< \mathtt{refl}(\alpha)^\ast(x)(\iota_1) \\
-1 &\quad \mathtt{refl}(\alpha)^\ast(x)(\iota_0)> \mathtt{refl}(\alpha)^\ast(x)(\iota_1)\ .
\end{cases}
\]

\begin{lemma}
\label{lem:tilde-rho}
Let $\mathcal A$ and $G$ be as above.
Then, for each $a\in\mathcal A$ and $x\in G(a)$, the pair
\[
(R_a(x);\vec\varepsilon_a(x))
\in \mathfrak S(\mathcal A(s,a))\ltimes C_2^{\times\mathcal A(s,a)}
\]
belongs to the subgroup $\mathfrak W_{\mathcal A}(a)$ defined in \eqref{eq:def-Weyl}.
Moreover, the induced maps
\[
\tilde R_a:=(R_a;\vec\varepsilon_a):G(a)\to\mathfrak W_{\mathcal A}(a)
\]
forms a map $G\to\mathfrak W_{\mathcal A}$ of crossed $\mathcal A$-groups.
\end{lemma}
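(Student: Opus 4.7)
The plan is to reduce everything to one central identity: for every morphism $\varphi\colon b\to a$ and every $x\in G(a)$, the permutation $\varphi^\ast(x)\in\mathfrak S(\mathcal A(s,b))$ equals $\varphi^\ast(R_a(x))\,\beta_\varphi(\vec\varepsilon_a(x))$. Once this is established, both assertions of the lemma fall out almost formally.

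To prove that identity, I would invoke the characterization in \cref{rem:fast-char}. The permutation $\varphi^\ast(x)$ on $\mathcal A(s,b)$ fits into the commutative square
\[
\xymatrix{
  \mathcal A(s,b) \ar[r]^{\varphi_\ast} \ar[d]_{\varphi^\ast(x)} & \mathcal A(s,a) \ar[d]^{R_a(x)} \\
  \mathcal A(s,b) \ar[r]^{\varphi^x_\ast} & \mathcal A(s,a) }
\]
by the very definition of $R_a$ and the condition \ref{condCG:gdist}, and by \cref{lem:crsgrp-fibord} it restricts to each fiber $(\varphi_\ast)^{-1}\{\alpha\}\to(\varphi^x_\ast)^{-1}\{\alpha^x\}$ as either an order-preserving or order-reversing map. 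Evaluating the condition in \cref{rem:corel-refl-rep} on $\iota_0,\iota_1\colon s\rightrightarrows\bar s$ shows that the sign of this restriction is precisely the component $\varepsilon_a(x)_\alpha$. Thus $\varphi^\ast(x)$ satisfies the two defining properties of $\varphi^\ast(R_a(x))\beta_\varphi(\vec\varepsilon_a(x))$, so the two permutations coincide. Since $\varphi^\ast(x)\in\mathfrak S_{\mathcal A}(b)$ (it preserves each $\mathcal A(c,b)$ via the action of $G$), this proves $(R_a(x);\vec\varepsilon_a(x))\in\mathfrak W_{\mathcal A}(a)$, giving the first claim.

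For the second claim, I need three compatibilities. First, $\tilde R_a$ is a group homomorphism: the $R_a$-component is a homomorphism because $G(a)$ acts on $\mathcal A(s,a)$; the $\vec\varepsilon_a$-component satisfies the twisted multiplication law of $\mathfrak S_{\mathcal A}(a)\ltimes C_2^{\times\mathcal A(s,a)}$ by applying $(xy)^\ast=y^\ast x^\ast$ to the $\mathtt{refl}$-diagram that defines $\vec\varepsilon$ and using the naturality of $\mathtt{refl}$ from \cref{rem:corel-refl-rep}. Second, for each $\varphi\colon b\to a$, the identity $\tilde R_b(\varphi^\ast(x))=\varphi^\ast(\tilde R_a(x))$ holds componentwise: on the $\mathfrak S$-component it is exactly the central identity established above, while on the $C_2^{\times\mathcal A(s,b)}$-component it reduces to $\varepsilon_b(\varphi^\ast(x))_\beta=\varepsilon_a(x)_{\varphi\beta}$ for $\beta\colon s\to b$, which follows at once from the naturality $\varphi_\ast\mathtt{refl}(\beta)=\mathtt{refl}(\varphi\beta)$. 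Third, the action of $\mathfrak W_{\mathcal A}(a)$ on $\mathcal A(b,a)$ is defined through its projection to $\mathfrak S_{\mathcal A}(a)$, so by \cref{lem:act-perm} the map $\tilde R_a$ respects the action $\varphi\mapsto\varphi^x$.

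The main obstacle is the computation of the second component of $\tilde R_a$ and its compatibility with multiplication and with $\varphi^\ast$. The argument is not deep but requires unwinding the definition of $\vec\varepsilon_a$ via the auxiliary square involving $\mathtt{refl}$, then carefully tracking how the order on two-element fibers over $\iota_0,\iota_1$ transforms under composition of the actions. All other verifications are essentially consequences of the central identity on $\mathcal A(s,b)$ together with the basic properties already collected in \cref{lem:crsgrp-fibord,lem:act-perm} and \cref{rem:corel-refl-rep,rem:fast-char}.
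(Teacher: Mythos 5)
Your proposal is correct and follows essentially the same route as the paper: the central identity $R_b(\varphi^\ast(x))=\varphi^\ast(R_a(x))\,\beta_\varphi(\vec\varepsilon_a(x))$ verified through the characterization in \cref{rem:fast-char} (paper's equation \eqref{eq:prf:tilde-rho:perm}), the sign-naturality $\vec\varepsilon(\varphi^\ast x)=\varphi^\ast\vec\varepsilon(x)$ via $\varphi\circ\mathtt{refl}(\beta)=\mathtt{refl}(\varphi\beta)$, the action-compatibility via \cref{lem:act-perm}, and the homomorphism property via the stacked $\mathtt{refl}$-squares giving $\varepsilon_a(yx)_\alpha=\varepsilon_a(y)_{\alpha^x}\varepsilon_a(x)_\alpha$. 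Only note that your shorthand ``$(xy)^\ast=y^\ast x^\ast$'' should be the twisted rule \ref{condCG:cdist} applied to $\mathtt{refl}(\alpha)$, which is evidently what you intend by the semidirect-product law.
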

\begin{proof}
Verifying the conditions in \cref{rem:fast-char}, one can observe that, for each $x\in G(a)$, and for each $\varphi:b\to a\in\mathcal A$, we have
\begin{equation}
\label{eq:prf:tilde-rho:perm}
R_a(\varphi^\ast(x))
= \varphi^\ast(R_a(x))\cdot \beta_\varphi(\vec\varepsilon_a(x))\ .
\end{equation}
The left hand side clearly belongs to $\mathfrak S_{\mathcal A}(a)$, this implies $(R_a(x);\vec\varepsilon_a(x))\in\mathfrak W_{\mathcal A}(a)$ by the definition \eqref{eq:def-Weyl} of $\mathfrak W_{\mathcal A}$.
On the other hand, it is also verified that
\begin{equation}
\label{eq:prf:tilde-rho:sign}
\vec\varepsilon_a(\varphi^\ast(x))
= \varphi^\ast(\vec\varepsilon_a(x))\ .
\end{equation}
The equation \eqref{eq:prf:tilde-rho:sign} and \eqref{eq:prf:tilde-rho:perm} imply that $\tilde R_a=(R_a;\vec\varepsilon_a)$ is natural with respect to $a\in\mathcal A$, so it defines a map $\tilde R:G\to\mathfrak W_{\mathcal A}$ of $\mathcal A$-sets.

It is obvious that $\tilde R:G\to\mathfrak W_{\mathcal A}$ respects the action on each $\mathcal A(a,b)$, so it remains to prove it is a degreewise group homomorphism.
Since the preservation of the unit is straightforward, it suffices to show, for each $a\in\mathcal A$, $\tilde R_a:G(a)\to\mathfrak W_{\mathcal A}(a)$ preserves multiplications.
For each $x,y\in G(a)$, and for each $\alpha\in\mathcal A(s,a)$, we have a commutative diagram below:
\begin{equation}
\label{diag:prf:tilde-rho:signmul}
\xymatrix@C=4em{
  \mathcal A(s,\bar s) \ar[r]^{\mathtt{refl}(\alpha)_\ast} \ar[d]_{\mathtt{refl}(\alpha)^\ast(x)} & \mathcal A(s,a) \ar[d]^x \\
  \mathcal A(s,\bar s) \ar[r]^{\mathtt{refl}(\alpha^x)_\ast} \ar[d]_{\mathtt{refl}(\alpha^x)^\ast(y)} & \mathcal A(s,a) \ar[d]^y \\
  \mathcal A(s,\bar s) \ar[r]^{\mathtt{refl}(\alpha^{yx})_\ast} & \mathcal A(s,a) }
\end{equation}
One can deduce from \eqref{diag:prf:tilde-rho:signmul} that
\[
\varepsilon_a(yx)_\alpha = \varepsilon_a(y)_{\alpha^x}\varepsilon_a(x)\ ,
\]
which implies $\vec\varepsilon_a(yx)=R_a(x)_\ast(\vec\varepsilon_a(y))\vec\varepsilon_a(x)$.
Thus, we obtain
\[
\begin{split}
\tilde R(yx)
&= (R_a(yx);\vec\varepsilon_a(yx)) \\
&= (R_a(y)R_a(x);R_a(x)_\ast(\vec\varepsilon_a(y))\vec\varepsilon_a(x)) \\
&= (R_a(y);\vec\varepsilon_a(y))\cdot(R_a(x);\vec\varepsilon_a(x))
\end{split}
\]
so that $R_a:G(a)\to\mathfrak W_{\mathcal A}(a)$ is a group homomorphism.
\end{proof}

\begin{theorem}[cf. Theorem 1.4 in \cite{Kra87}]
\label{theo:crsgrp-Weyl}
Let $\mathcal A$ be a small locally finite category equipped with an internally well-co-ordered generator $s\in\mathcal A$.
Then, the Weyl crossed $\mathcal A$-group $\mathfrak W_{\mathcal A}$ is a terminal object in the category $\mathbf{CrsGrp}_{\mathcal A}$.
\end{theorem}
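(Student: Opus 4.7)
The existence of a morphism $G \to \mathfrak W_{\mathcal A}$ for each crossed $\mathcal A$-group $G$ is already furnished by \cref{lem:tilde-rho}, which constructs $\tilde R = (R;\vec\varepsilon)\colon G \to \mathfrak W_{\mathcal A}$. The plan is therefore to prove uniqueness: any morphism $f\colon G \to \mathfrak W_{\mathcal A}$ of crossed $\mathcal A$-groups must coincide with $\tilde R$. The strategy is to reconstruct both coordinates of $f_a(x) = (\sigma_x;\vec\varepsilon'_x) \in \mathfrak W_{\mathcal A}(a)$ from data intrinsic to $G$ that $f$ is forced to preserve.

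For the permutation coordinate, the crossed structure on $\mathfrak W_{\mathcal A}$ is set up so that $\mathfrak W_{\mathcal A}(a)$ acts on each $\mathcal A(b,a)$ solely through the projection onto $\mathfrak S_{\mathcal A}(a)$. Since $f$ preserves this action, evaluating on $\mathcal A(s,a)$ gives $\sigma_x(\alpha) = \alpha^{f_a(x)} = \alpha^x = R_a(x)(\alpha)$ for every $\alpha$, whence $\sigma_x = R_a(x)$. For the sign coordinate, I would invoke naturality of $f$ along the morphism $\mathtt{refl}(\alpha)\colon \bar s \to a$ from \cref{rem:corel-refl-rep}; by the formula for the functoriality of $\mathfrak W_{\mathcal A}$ in \cref{prop:Weyl-grp}, this naturality reads
\[
f_{\bar s}(\mathtt{refl}(\alpha)^\ast(x)) = \mathtt{refl}(\alpha)^\ast(f_a(x)) = \bigl(\mathtt{refl}(\alpha)^\ast(R_a(x))\,\beta_{\mathtt{refl}(\alpha)}(\vec\varepsilon'_x);\ \mathtt{refl}(\alpha)^\ast(\vec\varepsilon'_x)\bigr).
\]
Projecting onto the first coordinate and invoking the permutation equality just established at $\bar s$, then comparing with \eqref{eq:prf:tilde-rho:perm} applied to $\tilde R$, forces $\beta_{\mathtt{refl}(\alpha)}(\vec\varepsilon'_x) = \beta_{\mathtt{refl}(\alpha)}(\vec\varepsilon_a(x))$ as permutations of $\mathcal A(s,\bar s)$. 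Evaluating this equality at $\iota_0$ and $\iota_1$, which both lie in the fiber of $\mathtt{refl}(\alpha)_\ast$ over $\alpha$, isolates the $\alpha$-component to give $\varepsilon'_{x,\alpha} = \varepsilon_a(x)_\alpha$; letting $\alpha$ vary yields $\vec\varepsilon'_x = \vec\varepsilon_a(x)$ and hence $f = \tilde R$.

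I expect the main subtlety to lie precisely in this last sign-extraction step: a priori $\beta_{\mathtt{refl}(\alpha)}(\vec\varepsilon'_x)$ is a composite permutation of $\mathcal A(s,\bar s)$ into which every coordinate $\varepsilon'_{x,\alpha'}$ feeds, and one must single out the contribution of $\varepsilon'_{x,\alpha}$ alone. The key observation is that by its very definition $\beta_{\mathtt{refl}(\alpha)}(\vec\varepsilon'_x)$ preserves each fiber of $\mathtt{refl}(\alpha)_\ast$ and acts on the fiber over $\alpha$ either trivially or as the unique order-reversal, according to $\varepsilon'_{x,\alpha}$; since $\iota_0 < \iota_1$ are distinct elements of that fiber (because $\mathtt{refl}(\alpha)\iota_0 = \mathtt{refl}(\alpha)\iota_1 = \alpha$), their relative order in the image reads off the sign unambiguously. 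Everything else reduces to routine bookkeeping with the formulas in \cref{prop:Weyl-grp} and the equations \eqref{eq:prf:tilde-rho:perm}--\eqref{eq:prf:tilde-rho:sign}.
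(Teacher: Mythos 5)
Your proposal is correct and follows essentially the same route as the paper's proof: existence is delegated to \cref{lem:tilde-rho}, the permutation coordinate of any map $f$ is forced by equivariance of the action on $\mathcal A(s,a)$, and the sign coordinate is pinned down by naturality along $\mathtt{refl}(\alpha)$ together with the fiberwise order behaviour (your use of \eqref{eq:prf:tilde-rho:perm} and evaluation at $\iota_0,\iota_1$ is just a spelled-out version of the paper's appeal to \cref{lem:crsgrp-fibord}). The only implicit assumption, that $\iota_0\neq\iota_1$ so the fiber over $\alpha$ has at least two elements, is likewise implicit in the paper's definition of $\vec\varepsilon_a$, so you are on equal footing with the original argument.
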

\begin{proof}
By virtue of \cref{lem:tilde-rho}, it suffices to show that, for each crossed $\mathcal A$-group, the map $\tilde R:G\to\mathfrak W_{\mathcal A}$ is the unique map of crossed $\mathcal A$-group for each $G\in\mathbf{CrsGrp}_{\mathcal A}$.
Since $\mathfrak W_{\mathcal A}(a)$ is, as a set, a subset of the direct product
\[
\mathfrak S_{\mathcal A}(a)\times C_2^{\mathcal A(s,a)}
\]
for each $a\in\mathcal A$, an $\mathcal A$-map $f:G\to\mathfrak W_{\mathcal A}$ is determined by maps
\[
f_{\mathsf{perm}}:G(a)\to\mathfrak S_{\mathcal A}(a)
\ ,\quad
f_{\mathsf{sign}}=(f^{(\alpha)}_{\mathsf{sign}})_{\alpha\in\mathcal A(s,a)}:G(a)\to C_2^{\times\mathcal A(s,a)}
\]
for each $a\in\mathcal A$.
If $f$ is a map of crossed $\mathcal A$-group, the map $f_{\mathsf{perm}}$ clearly has to be the one associated to the action of $G(a)$ on $\mathcal A(s,a)$.
On the other hand, each $f^{(\alpha)}_{\mathsf{sign}}$ is also determined automatically thanks to the naturality of $f$ and \cref{lem:crsgrp-fibord}.
It follows that $\tilde R$ is the only map into $\mathfrak W_{\mathcal A}$.
\end{proof}

\begin{corollary}
\label{cor:crsgrp-W/A}
Let $\mathcal A$ be as above.
Suppose $G$ is an $\mathcal A$-set which is equipped with a degreewise group structure.
Then, the following data are equivalent:
\begin{enumerate}[label={\rm(\alph*)}]
  \item left actions of $G(a)$ on $\mathcal A(b,a)$ for $a,b\in\mathcal A$ which exhibit $G$ as a crossed $\mathcal A$-group;
  \item a map $G\to\mathfrak W_{\mathcal A}$ of $\mathcal A$-sets which is a degreewise group homomorphism.
\end{enumerate}
\end{corollary}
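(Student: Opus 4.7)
The plan is to read off the bijection directly from the terminality established in \cref{theo:crsgrp-Weyl}, producing explicit constructions in both directions and then arguing that they are mutually inverse.

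For the direction $(a)\Rightarrow (b)$, there is essentially nothing new to do. Given a crossed $\mathcal A$-group structure on $G$, \cref{lem:tilde-rho} supplies the map $\tilde R=(R,\vec\varepsilon):G\to \mathfrak W_{\mathcal A}$, which by construction is a map of $\mathcal A$-sets and a degreewise group homomorphism. Thus $\tilde R$ is a well-defined $(b)$-datum.

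For the direction $(b)\Rightarrow (a)$, I would start from a map $f:G\to\mathfrak W_{\mathcal A}$ satisfying $(b)$ and define, for each $a,b\in\mathcal A$, $x\in G(a)$, and $\varphi:b\to a$,
\[
\varphi^x \;:=\; \varphi^{f(x)}\mathrlap{\ ,}
\]
where the right-hand side is the canonical action of $\mathfrak W_{\mathcal A}(a)$ on $\mathcal A(b,a)$ through its permutation component (cf.\ \cref{lem:act-perm}). Because $f$ is a degreewise group homomorphism, this is a left action. I would then verify \ref{condCG:gdist} by the chain
\[
(\psi\varphi)^x
= (\psi\varphi)^{f(x)}
= \psi^{f(x)}\varphi^{\psi^\ast(f(x))}
= \psi^{x}\varphi^{f(\psi^\ast(x))}
= \psi^{x}\varphi^{\psi^\ast(x)}\mathrlap{\ ,}
\]
using \ref{condCG:gdist} for $\mathfrak W_{\mathcal A}$ and the naturality of $f$, and verify \ref{condCG:cdist} analogously by unfolding both sides, applying \ref{condCG:cdist} for $\mathfrak W_{\mathcal A}$ to the pair $(f(x),f(y))$, and using that $f$ respects both the $\mathcal A$-set structure and multiplication.

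Finally, to see that the two constructions are mutually inverse, I would use \cref{theo:crsgrp-Weyl}: starting from a crossed group structure, taking $\tilde R$ and then redefining the action by $\varphi^x:=\varphi^{\tilde R(x)}$ recovers the original action, because the permutation component of $\tilde R$ is by definition the action (\cref{lem:act-perm}); conversely, starting from $f$, forming the crossed group structure above and then computing $\tilde R$ yields back $f$, since the permutation part of $\tilde R$ is forced to be the action we started with, and the sign part of $\tilde R$ is determined by the fiberwise order behaviour described in \cref{lem:crsgrp-fibord}---which is the same piece of data that determines the sign component of $f$ through the defining condition of $\mathfrak W_{\mathcal A}$.

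The main obstacle, and the only step that requires genuine bookkeeping, is the verification of \ref{condCG:cdist} for the induced action: this is where the interaction between the given $\mathcal A$-set structure on $G$, the group multiplication, and the pulled-back action must be reconciled, and it is here that the precise formula defining $\mathfrak W_{\mathcal A}$ (and the characterisation in \cref{rem:fast-char}) is used, via $f$, to transport the axiom from $\mathfrak W_{\mathcal A}$ back to $G$.
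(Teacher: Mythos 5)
The direction (a)$\Rightarrow$(b) and your uniqueness discussion are fine, but the verification of \ref{condCG:cdist} in the direction (b)$\Rightarrow$(a) does not go through, and this is exactly the step on which everything hinges. Unfolding both sides, applying \ref{condCG:cdist} for $\mathfrak W_{\mathcal A}$ to the pair $(f(x),f(y))$, and using that $f$ is a natural degreewise homomorphism only yields
\[
f\bigl(\varphi^\ast(xy)\bigr)=f\bigl((\varphi^y)^\ast(x)\,\varphi^\ast(y)\bigr)\ ,
\]
i.e.\ the desired identity \emph{after} applying $f$; to conclude \ref{condCG:cdist} itself you would have to cancel $f$, which requires an injectivity that is nowhere assumed. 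The failure is not cosmetic. Take $f$ to be the constant-unit map: it is always a map of $\mathcal A$-sets (structure maps preserve units by \cref{lem:crsgrp-ptd}) and a degreewise homomorphism, hence a legitimate (b)-datum, and the action it induces is trivial. For the trivial action, \ref{condCG:cdist} asserts that every structure map $\varphi^\ast\colon G(b)\to G(a)$ is a group homomorphism, which is a genuine extra condition on the given $\mathcal A$-set and group structures and does not follow from anything in (b). Concretely, for $\mathcal A=\Delta$ take a simplicial set with two $0$-simplices and five $1$-simplices (two vertices and three nondegenerate edges between them), with group structures $C_2$ and $C_5$; the degeneracy $G_0\to G_1$ is injective, hence cannot be a homomorphism, so no crossed structure can correspond to the unit map even though the unit map is a valid (b)-datum.

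What is missing is precisely the compatibility of the multiplication of $G$ with the twisted $\mathcal A$-set structure: \ref{condCG:cdist} is equivalent to the multiplication being a map $G\rtimes G\to G$ of $\mathcal A$-sets over the terminal crossed group, which is how the paper itself encodes this condition later (\cref{lem:crsgrp-monobj}, \cref{prop:crsgrp-ff} in \cref{sec:monobj}); it is additional structure beyond a degreewise-homomorphic $\mathcal A$-map $G\to\mathfrak W_{\mathcal A}$. So your plan of ``transporting the axiom from $\mathfrak W_{\mathcal A}$ back to $G$ through $f$'' cannot work as written: either the (b)-datum must be supplemented by this multiplicative compatibility (after which your argument for \ref{condCG:gdist} and the mutual-inverse discussion are fine), or one should only use the weaker and unproblematic content that the action is determined by the map, as in \cref{cor:crsgrpmap-W/A}. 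In your write-up this gap is hidden in the single sentence claiming \ref{condCG:cdist} is verified ``analogously''; it is the one step that cannot be carried out from the stated hypotheses.
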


\begin{corollary}
\label{cor:crsgrpmap-W/A}
Let $\mathcal A$ be as above, and let $G$ and $H$ be crossed $\mathcal A$-groups.
Then, a map $f:G\to H$ of $\mathcal A$-sets which is a degreewise group homomorphism is a map of crossed $\mathcal A$-groups if and only if the triangle below is commutative:
\[
\xymatrix{
  G \ar[rr]^f \ar[dr]_{\tilde R} && H \ar[dl]^{\tilde R} \\
  & \mathfrak W_{\mathcal A} & }
\]
\end{corollary}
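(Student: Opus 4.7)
The plan is to reduce both directions of the equivalence to results already in hand: the terminality of $\mathfrak W_{\mathcal A}$ (\cref{theo:crsgrp-Weyl}) and \cref{lem:act-perm}, which states that the action of $G(b)$ on $\mathcal A(a,b)$ is entirely controlled by the permutation representation $R_b:G(b)\to\mathfrak S_{\mathcal A}(b)$, i.e.\ by the first coordinate of $\tilde R_b$.

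For the forward direction, suppose $f:G\to H$ is a map of crossed $\mathcal A$-groups. By \cref{lem:tilde-rho}, both $\tilde R_H:H\to\mathfrak W_{\mathcal A}$ and $\tilde R_G:G\to\mathfrak W_{\mathcal A}$ are maps of crossed $\mathcal A$-groups, so the composite $\tilde R_H\circ f:G\to\mathfrak W_{\mathcal A}$ is also a map of crossed $\mathcal A$-groups. Since $\mathfrak W_{\mathcal A}$ is the terminal object by \cref{theo:crsgrp-Weyl}, there is a unique such map, hence $\tilde R_H\circ f=\tilde R_G$ and the triangle commutes.

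For the backward direction, assume the triangle commutes. Since $f$ is already a map of $\mathcal A$-sets that is degreewise a group homomorphism, it remains only to verify that $f$ respects the actions on $\mathcal A(a,b)$, that is, $\varphi^x=\varphi^{f(x)}$ for every $\varphi:a\to b\in\mathcal A$ and $x\in G(b)$ (with the left side the $G$-action and the right the $H$-action). By \cref{lem:act-perm}, these actions factor through $\mathfrak S_{\mathcal A}(b)$ via $R^G_b$ and $R^H_b$ respectively, so it suffices to prove $R^H_b(f(x))=R^G_b(x)$ in $\mathfrak S_{\mathcal A}(b)$. But this is precisely what we obtain by projecting the equality $\tilde R_H(f(x))=\tilde R_G(x)$ in $\mathfrak W_{\mathcal A}(b)\subset\mathfrak S_{\mathcal A}(b)\ltimes C_2^{\times\mathcal A(s,b)}$ onto the first factor. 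Hence $f$ respects the actions and is a map of crossed $\mathcal A$-groups.

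The proof contains no genuine obstacle; the real content has already been packaged into \cref{theo:crsgrp-Weyl} and \cref{lem:act-perm}. The only point that requires a moment's thought is ensuring that the first-coordinate projection $\mathfrak W_{\mathcal A}(b)\to\mathfrak S_{\mathcal A}(b)$ recovers $R_b$ from $\tilde R_b$, which is immediate from the definition $\tilde R_b=(R_b;\vec\varepsilon_b)$ given just before \cref{lem:tilde-rho}.
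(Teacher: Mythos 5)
Your proof is correct and follows exactly the route the paper intends: the forward direction is the uniqueness part of the terminality of $\mathfrak W_{\mathcal A}$ (\cref{theo:crsgrp-Weyl}), and the backward direction uses \cref{lem:act-perm} together with the fact that the first coordinate of $\tilde R$ is the permutation representation $R$, so that equality of the composites forces the two actions on each $\mathcal A(a,b)$ to coincide. The paper leaves this as an immediate corollary, and your argument supplies precisely the omitted details.
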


\begin{example}
In the case $\mathcal A=\widetilde\Delta$, since it is a full subcategory of $\mathbf{Ord}$, we have
\[
\begin{gathered}
\mathfrak S_{\widetilde\Delta}(\langle n\rangle)
=\mathbf{Pos}(\widetilde\Delta(\langle 1\rangle,\langle n\rangle))
\cong\mathfrak S_n \\
\mathfrak W_{\widetilde\Delta}(\langle n\rangle)
=\mathfrak S_n\ltimes C_2^{\times n}
\cong H_n\ .
\end{gathered}
\]
It is easily verified that these induce an isomorphism from $\mathfrak W_{\widetilde\Delta}$ to the hyperoctahedral crossed simplicial group $\mathfrak H$ described in \cref{ex:crsgrp-simpWeyl}
\end{example}

\begin{example}
\label{ex:crsgrp-intWeyl}
Recall that, in the case $\mathcal A=\nabla$, we saw in \cref{ex:symg-nabla} that $\mathfrak S_\nabla(\double\langle n\double\rangle)\cong\mathfrak S(\langle n\rangle)\times\mathfrak S(\{-\infty,\infty\})$.
We claim an element of $\mathfrak S_\nabla(\double\langle n\double\rangle)\ltimes C_2^{\times\double\langle n\double\rangle}$ of the form
\[
x=((\sigma,\theta);\varepsilon_{-\infty},\varepsilon_1\dots,\varepsilon_n,\varepsilon_\infty)
\]
for $(\sigma,\theta)\in\mathfrak S(\langle n\rangle)\times\mathfrak S(\{-\infty,\infty\})$ and $\varepsilon_i\in C_2$ belongs to $\mathfrak W_\nabla(\double\langle n\double\rangle)$ if and only if $\theta=\varepsilon_{-\infty}=\varepsilon_\infty$ under the canonical identification $\mathfrak S(\{-\infty,\infty\})\cong C_2$.
For a map $\varphi:\double\langle m\double\rangle\to\double\langle n\double\rangle\in\nabla$, the permutation $\varphi^\ast(x)$ restricts to bijections
\[
\varphi^{-1}\{-\infty\}\to\varphi^{-1}\{\theta(-\infty)\}
\ ,\quad \varphi^{-1}\{\infty\}\to\varphi^{-1}\{\theta(\infty)\}
\]
which are either order-preserving or order-reversing according to $\varepsilon_{-\infty}$ and $\varepsilon_\infty$ respectively.
On the other hand, in view of the computation of \cref{ex:symg-nabla}, $\varphi^\ast(x)$ belongs to $\mathfrak S_\nabla(\double\langle m\double\rangle)$ if and only if it preserves the subset $\{-\infty,\infty\}$.
It is easily seen that this happens for every $\varphi:\double\langle m\double\rangle\to\double\langle n\double\rangle$ precisely when the elements $\theta$ and $\varepsilon_{\pm\infty}$ all coincide.
As a consequence, we obtain an isomorphism
\[
\mathfrak W_\nabla(\double\langle n\double\rangle)
\cong H_n\times C_2\ .
\]
\end{example}

We finally note that the explicit computation of terminal crossed $\mathcal A$-group leads to a classification.

\begin{proposition}[cf. Theorem 3.6 in \cite{FL91}]
\label{prop:crsgrp-clsfy}
Let $\mathcal A$ be an arbitrary small category.
Then, for every crossed $\mathcal A$-group $G$, there is a sequence
\begin{equation}
\label{eq:crsgrp-clsfy:ex}
G^{\mathsf{nc}} \hookrightarrow G \twoheadrightarrow G^{\mathsf{red}}
\end{equation}
of maps of crossed $\mathcal A$-groups such that
\begin{enumerate}[label={\rm(\roman*)}]
  \item the sequence \eqref{eq:crsgrp-clsfy:ex} is degreewisely a short exact sequence of groups;
  \item $G^{\mathsf{nc}}$ is a (non-crossed) $\mathcal A$-group;
  \item $G^{\mathsf{red}}$ is a crossed $\mathcal A$-subgroup of the terminal crossed $\mathcal A$-group.
\end{enumerate}
Moreover, the sequence \eqref{eq:crsgrp-clsfy:ex} extends to a functor from $\mathbf{CrsGrp}_{\mathcal A}$ into the category of degreewise short exact sequences in $\mathbf{CrsGrp}_{\mathcal A}$.
\end{proposition}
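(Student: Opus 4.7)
The plan is to factor the unique map from $G$ into the terminal crossed $\mathcal A$-group. Write $\mathfrak T_{\mathcal A}$ for the terminal object, which exists by \cref{cor:crsgrp-compl}, and $\pi_G\colon G\to\mathfrak T_{\mathcal A}$ for the unique morphism. I would then set $G^{\mathsf{red}}\subset\mathfrak T_{\mathcal A}$ to be the degreewise image of $\pi_G$ and $G^{\mathsf{nc}}$ to be the degreewise kernel of $\pi_G$, with \eqref{eq:crsgrp-clsfy:ex} being the obvious epi--mono factorization of $\pi_G$ composed with the inclusion into $G$.

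First I would verify that these two subobjects inherit crossed $\mathcal A$-group structures, as already claimed in the remarks following \cref{cor:noncrs-triv}. The image $G^{\mathsf{red}}$ is closed under $\varphi^\ast$ by naturality of $\pi_G$, and inherits its action on $\mathcal A(b,a)$ by restriction from $\mathfrak T_{\mathcal A}$. The kernel $G^{\mathsf{nc}}$ is preserved under $\varphi^\ast$ because $\varphi^\ast$ sends unit to unit by \cref{lem:crsgrp-ptd}; more conceptually, $G^{\mathsf{nc}}$ is the pullback $G\times_{\mathfrak T_{\mathcal A}}\ast$ in $\mathbf{CrsGrp}_{\mathcal A}$ (the map $\ast\to\mathfrak T_{\mathcal A}$ being the degreewise inclusion of the unit), so it inherits a canonical crossed group structure via \cref{prop:crsgrp-limconn}. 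Exactness of the sequence at each level is tautological from these definitions.

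The decisive point is that $G^{\mathsf{nc}}$ is non-crossed. By \cref{cor:noncrs-triv}, this reduces to showing $G^{\mathsf{nc}}(a)$ acts trivially on every $\mathcal A(b,a)$. Since $\pi_G$ is a map of crossed groups, it respects the action on morphisms, so for every $x\in G(a)$ and $\varphi\in\mathcal A(b,a)$ one has $\varphi^x=\varphi^{\pi_G(x)}$; if $x$ lies in $G^{\mathsf{nc}}(a)$ then $\pi_G(x)$ is the unit of $\mathfrak T_{\mathcal A}(a)$, whence $\varphi^x=\varphi$, as required. Finally, functoriality is automatic from the universal property: any morphism $f\colon G\to H$ satisfies $\pi_H\circ f=\pi_G$ by uniqueness, so $f$ restricts to maps $G^{\mathsf{nc}}\to H^{\mathsf{nc}}$ and $G^{\mathsf{red}}\to H^{\mathsf{red}}$, assembling into a morphism of degreewise short exact sequences. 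The main obstacle here is conceptual rather than computational: once one recognises that the whole classification reduces to the universal property of $\mathfrak T_{\mathcal A}$ together with the ``kernel is non-crossed'' principle implicit in \cref{cor:noncrs-triv}, nothing substantial remains.
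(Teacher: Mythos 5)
Your construction is exactly the paper's: take $G^{\mathsf{red}}$ to be the image of the unique map $G\to\mathfrak T_{\mathcal A}$ and $G^{\mathsf{nc}}$ the pullback of $\ast$ along it (equivalently the degreewise kernel), then check the kernel acts trivially and that functoriality follows from uniqueness of the map to the terminal object. The paper leaves these verifications as ``verified easily,'' and your write-up just supplies those routine details, so the proposal is correct and essentially the same argument.
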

\begin{proof}
Put $\mathfrak T_{\mathcal A}$ the terminal crossed $\mathcal A$-group, and set $G^{\mathsf{red}}$ to be the image of the unique map $G\to\mathfrak T_{\mathcal A}$.
Then, we can define $G^{\mathsf{nc}}$ by the following pullback square:
\[
\xymatrix{
  G^{\mathsf{nc}} \ar[r] \ar[d] \ar@{}[dr]|(.4)\pbcorner & \ast \ar[d] \\
  G \ar[r] & G^{\mathsf{red}} }
\]
The required properties are verified easily.
\end{proof}

\begin{example}
\label{ex:simpcrs-clsfy}
In Proposition 3.5 in \cite{FL91}, there is a complete list of crossed simplicial subgroups of $\mathfrak W_\Delta\cong\mathfrak H$ as in \cref{tb:simpcrs-clsfy}.
\begin{table}[htbp]
\centering
\begin{tabular}{c|c|c}
  \textbf{name} & \textbf{symbol} & \textbf{$n$-th group} \\\hline
  Trivial & $*$ & $1$ \\
  Reflexive & $C_2$ & $C_2$ \\
  Cyclic & $\Lambda$ & $C_{n+1}$ \\
  Dihedral & $\mathfrak D$ & $D_{n+1}$ \\
  Symmetric & $\mathfrak S$ & $\mathfrak S_{n+1}$ \\
  Reflexosymmetric & $\widetilde{\mathfrak S}$ & $\mathfrak S_{n+1}\times C_2$ \\
  Weyl (Hyperoctahedral) & $\mathfrak W_\Delta\cong\mathfrak H$ & $H_{n+1}$
\end{tabular}
\caption{The crossed simplicial subgroups of $\mathfrak W_\Delta$}
\label{tb:simpcrs-clsfy}
\end{table}
\end{example}

\begin{example}
We will see in \cref{ex:Ran-aug} in \cref{sec:basechange} that the embedding $\Delta\hookrightarrow\widetilde\Delta$ induces a fully faithful embedding
\[
\mathbf{CrsGrp}_\Delta\hookrightarrow\mathbf{CrsGrp}_{\widetilde\Delta}
\]
which sends $\mathfrak W_\Delta$ to $\mathfrak W_{\widetilde\Delta}$.
Hence, we obtain the same list of augmented crossed simplicial subgroups of $\mathfrak W_{\widetilde\Delta}$ as \cref{tb:simpcrs-clsfy} while the indices are shifted by $1$ because of the identification $[n]\cong\langle n+1\rangle$.
\end{example}

In the appendix, we will compute all the crossed interval subgroups of $\mathfrak W_\nabla$.

\section{Crossed groups as monoid objects}
\label{sec:monobj}

We proved \cref{theo:crsgrp-locpres} that asserts the category $\mathbf{CrsGrp}_{\mathcal A}$ is locally presentable for each small category $\mathcal A$ by highly abstract argument, and it was for the sake of the existence of terminal crossed groups.
On the other hand, we also proved the latter independently in a more explicit way in \cref{theo:crsgrp-Weyl} for some special categories $\mathcal A$.
Combining this with \cref{prop:crsgrp-colim} and \cref{prop:crsgrp-limconn}, which are proved in more or less constructive ways, one can recover the completeness and the cocompleteness of $\mathbf{CrsGrp}_{\mathcal A}$ mentioned in \cref{cor:crsgrp-compl}.
In this section, we are going further: we see the category $\mathbf{CrsGrp}_{\mathcal A}$ is even \emph{algebraic}, in some sense, over a presheaf category, which guarantees the locally presentability.
More precisely, for a crossed $\mathcal A$-group $G$, we have the forgetful functor
\begin{equation}
\label{eq:crsgrp-forget/}
\mathbf{CrsGrp}^{/G}_{\mathcal A}\to\mathbf{Set}^{/G}_{\mathcal A}
\end{equation}
between slice categories.
One has the category $\mathbf{CrsGrp}_{\mathcal A}$ on the left hand side when he takes $G$ to be the terminal crossed $\mathcal A$-group, say $G=\mathfrak T_{\mathcal A}$.
Note that though the functor \eqref{eq:crsgrp-forget/} really forgets degreewise group structures, it remembers the actions on each hom-set $\mathcal A(a,b)$ through that of $G$, while the category $\mathbf{Set}^{/G}_{\mathcal A}$ is just a presheaf topos.
This suggests that the \emph{true} underlying category of $\mathbf{CrsGrp}_{\mathcal A}$ should be not $\mathbf{Set}_{\mathcal A}$ but the slice category $\mathbf{Set}^{/\mathfrak T_{\mathcal A}}_{\mathcal A}$.
Throughout this section, we fix a small category $\mathcal A$ and a crossed $\mathcal A$-group $G$ and aim to see \eqref{eq:crsgrp-forget/} has a \emph{good} left adjoint.

\begin{notation}
For each object $X\in\mathbf{Set}^{/G}_{\mathcal A}$, say $p:X\to G$ is the structure map, we consider its \emph{action} on hom-sets as
\[
X(b)\times\mathcal A(a,b)\to\mathcal A(a,b)
\ ;\quad (x,\varphi)\mapsto \varphi^x := \varphi^{p(x)}
\]
for $a,b\in\mathcal A$ even though $X(b)$ is no longer a group.
\end{notation}

To begin with, we introduce the following construction.

\begin{definition}
For $K\in\mathbf{Set}_{\mathcal A}$ and $X\in\mathbf{Set}^{/G}_{\mathcal A}$, we define an $\mathcal A$-set $K\rtimes_G X$ as follows:
\begin{itemize}
  \item for each $a\in\mathcal A$, we set $(K\rtimes_G X)(a):=K(a)\times X(a)$;
  \item for each $\varphi:a\to b\in\mathcal A$, we set
\[
\varphi^\ast:(K\rtimes_G X)(b)\to (K\rtimes_G X)(a)
\ ;\quad (k,x)\mapsto ((\varphi^x)^\ast(k),\varphi^\ast(x))\ .
\]
\end{itemize}
\end{definition}

\begin{remark}
The operation $\rtimes$ was originally introduced by Krasauskas in Definition 2.1 in \cite{Kra87} in the case $\mathcal A=\Delta$ and $G=\mathfrak W_\Delta$.
\end{remark}

To see $K\rtimes_G X$ above actually defines an $\mathcal A$-set, it is convenient to consider the map
\begin{equation}
\label{eq:semicrs}
\operatorname{crs}_X:X(b)\times\mathcal A(a,b)\to\mathcal A(a,b)\times X(a)
\ ;\quad (x,\varphi)\mapsto (\varphi^x,\varphi^\ast(x))
\end{equation}
for each $X\in\mathbf{Set}^{/G}_{\mathcal A}$.
Similarly to the case of crossed groups, we have the following commutative diagram:
\begin{equation}
\label{eq:semicrs-commsq}
\vcenter{
  \xymatrix{
    X(c)\times\mathcal A(b,c)\times\mathcal A(a,b) \ar[r]_{\vec{\operatorname{crs}_X}} \ar[d]^{\mathrm{id}\times\mathrm{comp}} & \mathcal A(b,c)\times\mathcal A(a,b)\times X(a) \ar[d]^{\mathrm{comp}\times\mathrm{id}} \\
    X(c)\times\mathcal A(a,c) \ar[r]^{\operatorname{crs}_X} & \mathcal A(a,c)\times X(a) }}
\end{equation}
Note that the $\mathcal A$-set structure on $K\rtimes_G X$ is given by
\begin{equation}
\label{eq:semicrs-str}
\begin{split}
(K\rtimes_G X)(b)\times\mathcal A(a,b)
&= K(b)\times X(b)\times\mathcal A(a,b) \\
&\xrightarrow{\mathrm{id}\times\operatorname{crs}_X} K(b)\times\mathcal A(a,b)\times X(a) \\
&\xrightarrow{\mathrm{act_K}\times\mathrm{id}} K(a)\times X(a) \\
&= (K\rtimes_G X)(a)\ .
\end{split}
\end{equation}
Then, combining \eqref{eq:semicrs-commsq} and \eqref{eq:semicrs-str}, one can verify $K\rtimes_G X$ is actually an $\mathcal A$-set.

\begin{lemma}
\label{lem:semicrs-nat}
If two objects $a,b\in\mathcal A$ are fixed, the map $\operatorname{crs}$ given in \eqref{eq:semicrs} is natural with respect to $X\in\mathbf{Set}^{/G}_{\mathcal A}$.
\end{lemma}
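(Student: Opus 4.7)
The plan is to unfold both definitions and observe that naturality breaks into two independent checks, each of which is immediate from the hypotheses on a morphism in $\mathbf{Set}^{/G}_{\mathcal A}$. Concretely, given a morphism $f:X\to Y$ in $\mathbf{Set}^{/G}_{\mathcal A}$, with structure maps $p_X:X\to G$ and $p_Y:Y\to G$ satisfying $p_Y\circ f = p_X$, I want to show commutativity of the square
\[
\xymatrix{
  X(b)\times\mathcal A(a,b) \ar[r]^{\operatorname{crs}_X} \ar[d]_{f(b)\times\mathrm{id}} & \mathcal A(a,b)\times X(a) \ar[d]^{\mathrm{id}\times f(a)} \\
  Y(b)\times\mathcal A(a,b) \ar[r]^{\operatorname{crs}_Y} & \mathcal A(a,b)\times Y(a) }
\]
by chasing an element $(x,\varphi)$.

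The first coordinate of $\operatorname{crs}_Y(f(b)(x),\varphi)$ is $\varphi^{p_Y(f(b)(x))}$, while the first coordinate of $\operatorname{crs}_X(x,\varphi)$ is $\varphi^{p_X(x)}$; these agree precisely because $f$ lies over $G$, i.e.\ $p_Y\circ f(b) = p_X$ at the level $b$. The second coordinate of $\operatorname{crs}_Y(f(b)(x),\varphi)$ is $\varphi^\ast(f(b)(x))$, while applying $\mathrm{id}\times f(a)$ to $\operatorname{crs}_X(x,\varphi)$ yields $f(a)(\varphi^\ast(x))$ in the second slot; these agree because $f$ is a morphism of $\mathcal A$-sets, i.e.\ $\varphi^\ast\circ f(b) = f(a)\circ\varphi^\ast$.

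There is no real obstacle here: the lemma is essentially a restatement of the two defining properties of a morphism in $\mathbf{Set}^{/G}_{\mathcal A}$, namely naturality in $\mathcal A$ and compatibility over $G$, organized to match the two components of $\operatorname{crs}$. Consequently the proof will be a brief two-line computation, and no auxiliary construction beyond inspecting the definition \eqref{eq:semicrs} is needed.
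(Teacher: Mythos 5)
Your proof is correct and follows exactly the paper's argument: chase $(x,\varphi)$ through both composites, match the first coordinates using $p_Y\circ f=p_X$ (so the actions of $x$ and $f(x)$ on $\mathcal A(a,b)$ coincide), and match the second coordinates using the naturality of $f$ as a map of $\mathcal A$-sets. Nothing further is needed.
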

\begin{proof}
Suppose $f:X\to Y\in\mathbf{Set}^{/G}_{\mathcal A}$.
We have to show the square below commutes:
\[
\xymatrix{
  X(b)\times\mathcal A(a,b) \ar[r]^{\operatorname{crs}_X} \ar[d]_{f\times\mathrm{id}} & \mathcal A(a,b)\times X(a) \ar[d]^{\mathrm{id}\times f} \\
  Y(b)\times\mathcal A(a,b) \ar[r]^{\operatorname{crs}_Y} & \mathcal A(a,b)\times Y(a) }
\]
For $(x,\varphi)\in X(b)\times\mathcal A(a,b)$, we have
\[
\begin{gathered}
(\mathrm{id}\times f)\circ\operatorname{crs}_X(x,\varphi) = (\varphi^x,f\varphi^\ast(x))\ , \\
\operatorname{crs}_Y\circ(f\times\mathrm{id})(x,\varphi) = (\varphi^{f(x)},\varphi^\ast f(x))\ .
\end{gathered}
\]
Since $f$ is a map of $\mathcal A$-set over $G$, the actions of $x$ and $f(x)$ on $\mathcal A(a,b)$ agree with each other, so the required result follows.
\end{proof}

\begin{corollary}
\label{cor:semicrs-funct}
The assignment $(K,X)\to K\rtimes_G X$ defines a functor
\[
(\rtimes_G):\mathbf{Set}_{\mathcal A}\times\mathbf{Set}^{/G}_{\mathcal A}\to\mathbf{Set}_{\mathcal A}\ .
\]
\end{corollary}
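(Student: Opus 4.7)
The plan is to define the bifunctor on morphisms and then check it is an $\mathcal A$-set map and respects composition. Given $f:K\to K'\in\mathbf{Set}_{\mathcal A}$ and $g:X\to Y\in\mathbf{Set}^{/G}_{\mathcal A}$, I would define
\[
f\rtimes_G g : K\rtimes_G X \to K'\rtimes_G Y
\]
to be the map whose component at each $a\in\mathcal A$ is the cartesian product of set maps $f_a\times g_a:K(a)\times X(a)\to K'(a)\times Y(a)$.

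The core step is to verify that $f\rtimes_G g$ is a morphism of $\mathcal A$-sets, i.e.~that for every $\varphi:a\to b\in\mathcal A$ the square comparing the structure map \eqref{eq:semicrs-str} for $(K,X)$ and for $(K',Y)$ commutes. Using the factorization \eqref{eq:semicrs-str}, this square splits as the horizontal composite of two rectangles: one built out of $\operatorname{crs}_X$ versus $\operatorname{crs}_Y$ (with an inert $K$-factor), and one built out of the action $K(b)\times\mathcal A(a,b)\to K(a)$ versus the analogous action for $K'$ (with an inert $X$-factor on one side and $Y$-factor on the other). The first rectangle commutes by the naturality established in \cref{lem:semicrs-nat}, and the second commutes because $f:K\to K'$ is a map of $\mathcal A$-sets. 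Pasting yields the desired commutativity.

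Once this is known, the functoriality conditions are immediate: since degreewisely $f\rtimes_G g$ is just the set-theoretic cartesian product $f_a\times g_a$, the identities $\mathrm{id}_K\rtimes_G\mathrm{id}_X=\mathrm{id}_{K\rtimes_G X}$ and $(f'\rtimes_G g')(f\rtimes_G g)=(f'f)\rtimes_G(g'g)$ follow from the corresponding identities in $\mathbf{Set}$. Functoriality separately in $K$ (with $X$ fixed) and in $X$ (with $K$ fixed), together with the Godement-style compatibility, are likewise read off degreewise.

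There is no real obstacle: all the genuine content has already been isolated in \cref{lem:semicrs-nat} and in the verification that $K\rtimes_G X$ is an $\mathcal A$-set via diagram \eqref{eq:semicrs-commsq}. The corollary is essentially a packaging statement, so the proof should amount to a short remark that both lemmas applied simultaneously yield the required naturality square, with functoriality being trivial.
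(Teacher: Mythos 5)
Your proof is correct and matches the paper's (implicit) argument: the paper states this corollary without proof precisely because, as you note, the only content is the $\mathcal A$-set verification via \eqref{eq:semicrs-str} together with the naturality in \cref{lem:semicrs-nat}, with degreewise functoriality being trivial. Your two-rectangle pasting is exactly the intended justification.
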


The key observation is that we can lift the functor $(\rtimes_G)$ to a monoidal structure on $\mathbf{Set}^{/G}_{\mathcal A}$.
For this, note that the degreewise multiplication gives rise to an $\mathcal A$-map
\[
\mu:G\rtimes_G G\to G\ .
\]
Indeed, for $\varphi:a\to b\in\mathcal A$, the condition \ref{condCG:cdist} in the definition of crossed $\mathcal A$-groups implies the following square is commutative:
\[
\xymatrix{
  (G\rtimes_G G)(b) \ar[d]_{\varphi^\ast} \ar[r]^-\mu & G(b) \ar[d]^{\varphi^\ast} \\
  (G\rtimes_G G)(a) \ar[r]^-\mu & G(a) }
\]
Hence, we can define a functor
\[
(\rtimes_G):\mathbf{Set}^{/G}_{\mathcal A}\times\mathbf{Set}^{/G}_{\mathcal A}\to\mathbf{Set}^{/G}_{\mathcal A}
\]
by
\[
(X,Y)\mapsto (X\rtimes_G Y\to G\rtimes_G G\xrightarrow\mu G)\ .
\]

\begin{proposition}
\label{prop:semicrs-monoidal}
The functor
\[
(\rtimes_G):\mathbf{Set}^{/G}_{\mathcal A}\times\mathbf{Set}^{/G}_{\mathcal A}\to\mathbf{Set}^{/G}_{\mathcal A}
\]
given above is a monoidal structure on $\mathbf{Set}^{/G}_{\mathcal A}$ where the unit is the terminal $\mathcal A$-set $\ast$ with the unit map $\ast\to G$.
Moreover, the monoidal structure is biclosed; i.e. there are functors
\[
\mathrm{Hom}^{\mathsf L}_G,\mathrm{Hom}^{\mathsf R}_G:(\mathbf{Set}^{/G}_{\mathcal A})^\opposite\times\mathbf{Set}^{/G}_{\mathcal A}\to\mathbf{Set}^{/G}_{\mathcal A}
\]
together with natural isomorphisms
\[
\begin{split}
\mathbf{Set}^{/G}_{\mathcal A}(X\rtimes_G Y,Z)
&\cong \mathbf{Set}^{/G}_{\mathcal A}(X,\mathrm{Hom}^{\mathsf R}_G(Y,Z)) \\
&\cong \mathbf{Set}^{/G}_{\mathcal A}(Y,\mathrm{Hom}^{\mathsf L}_G(X,Z))\ .
\end{split}
\]
\end{proposition}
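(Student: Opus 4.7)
I split the proof into monoidal coherence and biclosedness.

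For the monoidal structure, note that $(X \rtimes_G Y) \rtimes_G Z$ and $X \rtimes_G (Y \rtimes_G Z)$ both have underlying $\mathcal A$-set with $X(a) \times Y(a) \times Z(a)$ in degree $a$. I take the associator $\alpha_{X,Y,Z}$ to be the identity on underlying sets and the unitors $\lambda_X : \ast \rtimes_G X \to X$, $\rho_X : X \rtimes_G \ast \to X$ to be the canonical projections. Compatibility with the structure maps to $G$ follows from the associativity and two-sided unit law in each group $G(a)$ together with the fact that $\ast \to G$ selects identities. The substantive step is that $\alpha$ is a map of $\mathcal A$-sets: for $\varphi : a \to b$ and $((x,y),z) \in X(b) \times Y(b) \times Z(b)$, unwinding the action gives
\[
\varphi^\ast((x,y),z) = \bigl( ((\varphi^z)^y)^\ast(x),\ (\varphi^z)^\ast(y),\ \varphi^\ast(z) \bigr),
\]
and the corresponding computation on $X \rtimes_G (Y \rtimes_G Z)$ produces the same triple once one rewrites $\varphi^{p_Y(y) p_Z(z)} = (\varphi^{p_Z(z)})^{p_Y(y)}$ by the left-action property. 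Pentagon and triangle then reduce to the cartesian associator and unitors in $\mathbf{Set}$ and are trivial.

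For biclosedness I appeal to the adjoint functor theorem. The slice $\mathbf{Set}^{/G}_{\mathcal A}$ is again a Grothendieck topos and in particular is cocomplete and locally presentable. It suffices to check that, for each $Y$, the functor $(-) \rtimes_G Y$ preserves arbitrary small colimits; the case of $X \rtimes_G (-)$ is symmetric. Colimits in $\mathbf{Set}^{/G}_{\mathcal A}$ are created by the forgetful functor to $\mathbf{Set}$ in each degree, so for $X = \colim_i X_i$ one has
\[
(X \rtimes_G Y)(a) = X(a) \times Y(a) \cong \colim_i \bigl( X_i(a) \times Y(a) \bigr) = \colim_i (X_i \rtimes_G Y)(a),
\]
the middle isomorphism being cartesian closedness of $\mathbf{Set}$. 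The structure map to $G$ and the twisted $\mathcal A$-action on both sides are defined pointwise by \eqref{eq:semicrs-str} and agree by naturality of $\operatorname{crs}$ in Lemma \ref{lem:semicrs-nat}. The adjoint functor theorem then supplies the required right adjoints $\mathrm{Hom}^{\mathsf R}_G(Y, -)$ and $\mathrm{Hom}^{\mathsf L}_G(X, -)$.

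The main obstacle is the naturality check for $\alpha$: it is a clerical application of the left-action convention together with (CGi) and (CGii), but one has to carry the twisting through two nested instances of $\rtimes_G$ without losing track of the order of composition. Everything else, including the unit laws and the coherence pentagon and triangle, is bookkeeping that reduces to cartesian identities in $\mathbf{Set}$ once the compatibility with the structure map to $G$ has been observed.
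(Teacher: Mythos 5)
Your proposal is correct and follows essentially the same route as the paper: the associativity isomorphism is the degreewise identity, verified to be a map of $\mathcal A$-sets over $G$ by unwinding the twisted action (your formula $(((\varphi^z)^y)^\ast(x),(\varphi^z)^\ast(y),\varphi^\ast(z))$ and the left-action identity $\varphi^{yz}=(\varphi^z)^y$ is exactly the key computation), and biclosedness is obtained from the adjoint functor theorem by noting that $\mathbf{Set}^{/G}_{\mathcal A}$ is a locally presentable (presheaf) topos and that $\rtimes_G$ is degreewise the cartesian product, hence preserves colimits in each variable.
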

\begin{proof}
We first show that, for $X,Y,Z\in\mathbf{Set}^{/G}_{\mathcal A}$, the degreewise canonical identification actually gives an isomorphism
\[
(X\rtimes_G Y)\rtimes_G Z \cong Z\rtimes_G (Y\rtimes_G Z)\ .
\]
Since it is clearly a degreewise bijection, it suffices to show it is actually a map of $\mathcal A$-sets over $G$.
Suppose $\varphi:a\to b\in\mathcal A$ is a morphism in $\mathcal A$.
Then, an easy computation shows that both maps
\[
\begin{gathered}
\varphi^\ast:((X\rtimes_G Y)\rtimes_G Z)(b)\to ((X\rtimes_G Y)\rtimes_G Z)(a) \\
\varphi^\ast:(X\rtimes_G (Y\rtimes_G Z))(b)\to (X\rtimes_G (Y\rtimes_G Z))(a)
\end{gathered}
\]
are identified with the map $\varphi^\ast:X(b)\times Y(b)\times Z(b)\to X(a)\times Y(a)\times Z(a)$ given by
\[
\varphi^\ast(x,y,z)
=(((\varphi^y)^z)^\ast(x),(\varphi^z)^\ast(y),z)\ .
\]
Thus, we obtain a canonical identification $(X\rtimes_G Y)\rtimes_G Z\cong X\rtimes_G(Y\rtimes_G Z)$ as $\mathcal A$-sets.
Actually it is an isomorphism over $G$; indeed, the structure maps into $G$ are given by the common formula
\[
X(a)\times Y(a)\times Z(a)\to G(a)
\;\quad (x,y,z)\mapsto p(x)q(y)r(z)\ ,
\]
where $p:X\to G$, $q:Y\to G$, and $r:Z\to G$ are the structure maps.
Therefore, we obtain an associativity isomorphism for the functor $\rtimes_G$.
The unitality of $\ast$ is easily verify, so the first assertion follows.

To see the monoidal structure is biclosed, note that the category $\mathbf{Set}^{/G}_{\mathcal A}$ is a presheaf topos and so locally presentable.
Hence, by General Adjoint Functor Theorem, it suffices to show the functor $\rtimes_G$ preserves arbitrary small colimits in each variable.
Notice that colimits in $\mathbf{Set}^{/G}_{\mathcal A}$ are computed in the category $\mathbf{Set}_{\mathcal A}$ and so agree with the degreewise ones.
Now, the functor $(\rtimes_G)$ is degreewisely just the cartesian product, so the problem is reduced to the case $\mathcal A=\ast$ where the result is obvious.
\end{proof}

\begin{remark}
We have an explicit description of $\mathcal A$-sets $\mathrm{Hom}^{\mathsf R}_G(Y,Z)$ and $\mathrm{Hom}^{\mathsf L}_G(X,Z)$ as follows:
\[
\begin{gathered}
\mathrm{Hom}^{\mathsf R}_G(Y,Z)(a)
= \coprod_{\mathcal A[a]\to G} \mathbf{Set}_{\mathcal A}(\mathcal A[a]\rtimes_G Y,Z) \\
\mathrm{Hom}^{\mathsf L}_G(X,Z)(a)
= \coprod_{\mathcal A[a]\to G} \mathbf{Set}_{\mathcal A}(X\rtimes_G\mathcal A[a],Z)
\end{gathered}
\]
\end{remark}

\begin{remark}
Similarly to \cref{prop:semicrs-monoidal}, one can also prove that the functor $\rtimes_G:\mathbf{Set}_{\mathcal A}\times\mathbf{Set}^{/G}_{\mathcal A}\to\mathbf{Set}_{\mathcal A}$ given in \cref{cor:semicrs-funct} defines a right action of the monoidal category $(\mathbf{Set}^{/G}_{\mathcal A},\rtimes_G)$ on the category $\mathbf{Set}_{\mathcal A}$.
Note that, in the case $\mathcal A=\Delta$, this functor was discussed in Section 4 and 5 in \cite{FL91}, where they wrote $F_G(X):=X\rtimes_G G$ for $X\in\mathbf{Set}_\Delta$ and a crossed simplicial group $G$.
\end{remark}

\begin{lemma}
\label{lem:coeff-monoidal}
Let $G\to H$ be a map of crossed $\mathcal A$-groups.
Then, the induced functor $\mathbf{Set}^{/G}_{\mathcal A}\to\mathbf{Set}^{/H}_{\mathcal A}$ is monoidal with respect to monoidal structures $\rtimes_G$ and $\rtimes_H$.
\end{lemma}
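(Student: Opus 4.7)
The plan is to exhibit the monoidal coherence data for the functor $F:\mathbf{Set}^{/G}_{\mathcal A}\to\mathbf{Set}^{/H}_{\mathcal A}$, which sends an object $X\to G$ to the composite $X\to G\to H$, as degreewise identity maps on the underlying $\mathcal A$-sets. This is the most efficient route because the construction of $\rtimes_G$ and $\rtimes_H$ agrees on underlying sets whenever the two actions of $Y(b)$ on $\mathcal A(a,b)$ coincide, and this compatibility is precisely what it means for $G\to H$ to be a map of crossed $\mathcal A$-groups.

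First I would unpack the data. The unit of $(\mathbf{Set}^{/G}_{\mathcal A},\rtimes_G)$ is $\ast\to G$ given by the group unit, and similarly for $H$. Since $G\to H$ is a degreewise group homomorphism, the composition $\ast\to G\to H$ equals the unit of $H$, so the unit structure map $F(\ast)\to\ast$ is the identity on $\ast$ and compatible with the maps to $H$.

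Next, for $X,Y\in\mathbf{Set}^{/G}_{\mathcal A}$, I would show the identity map on $\coprod_a X(a)\times Y(a)$ gives a morphism $F(X)\rtimes_H F(Y)\to F(X\rtimes_G Y)$ in $\mathbf{Set}^{/H}_{\mathcal A}$. The key point is that for $\varphi:a\to b\in\mathcal A$ and $(x,y)\in X(b)\times Y(b)$, the action in $X\rtimes_G Y$ sends $(x,y)$ to $((\varphi^y)^\ast(x),\varphi^\ast(y))$ where $\varphi^y=\varphi^{q_G(y)}$ with $q_G:Y\to G$ the structure map; the action in $F(X)\rtimes_H F(Y)$ uses instead $\varphi^{q_H(y)}$ where $q_H=(G\to H)\circ q_G$. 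Because $G\to H$ is a map of crossed $\mathcal A$-groups, it respects the action on $\mathcal A(a,b)$, i.e.\ $\varphi^{q_G(y)}=\varphi^{q_H(y)}$, and therefore the two $\mathcal A$-set structures coincide. The structure maps to $H$ also agree: both send $(x,y)$ to the image in $H$ of $p_G(x)q_G(y)$, which by multiplicativity of $G\to H$ equals $p_H(x)q_H(y)$.

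Finally I would verify the associator and unitor coherence conditions for a monoidal functor. Since both the tensorator and the unitor isomorphisms are identity maps on underlying $\mathcal A$-sets, and since the associators in $\mathbf{Set}^{/G}_{\mathcal A}$ and $\mathbf{Set}^{/H}_{\mathcal A}$ are themselves identities on underlying sets (as observed in the proof of \cref{prop:semicrs-monoidal}), the required coherence diagrams commute trivially. I do not expect any real obstacle here; the entire content of the lemma is the observation that crossed-group maps preserve the ``cross'' operation $\operatorname{crs}$, which is exactly condition that makes the tensorator well-defined.
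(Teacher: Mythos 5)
Your proposal is correct and follows essentially the same route as the paper: the tensorator is the identity, because a map of crossed $\mathcal A$-groups $G\to H$ forces the two $\mathcal A$-set structures on $X(a)\times Y(a)$ to coincide (the action of $y$ on $\varphi$ through $G$ equals that through $H$), and the structure maps to $H$ agree by multiplicativity, so coherence is trivial. The paper states this more tersely but the content is identical.
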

\begin{proof}
For each $X,Y\in\mathbf{Set}^{/G}_{\mathcal A}$, $X\rtimes_G Y$ and $X\rtimes_H Y$ are clearly identical as $\mathcal A$-sets.
In addition, since $G\to H$ is a map of crossed $\mathcal A$-groups, the square
\[
\xymatrix{
  X\rtimes_G Y \ar@{=}[r] \ar[d] & X\rtimes_H Y \ar[d] \\
  G \ar[r] & H }
\]
is commutative.
Hence, we obtain the required result.
\end{proof}

We are interested in \emph{monoid objects} in the category $\mathbf{Set}^{/G}_{\mathcal A}$ with respect to the monoidal structure $\rtimes_G$.
Recall that a monoid object in $\mathbf{Set}^{/G}_{\mathcal A}$ is an object $M$ equipped with two morphisms
\[
\begin{gathered}
\eta:\ast\to M \\
\mu:M\rtimes_S M\to M
\end{gathered}
\]
satisfying the ordinary conditions of monoids, namely the \emph{associativity} and the \emph{unitality}.
The next lemma shows crossed $\mathcal A$-groups are examples of monoid objects.

\begin{lemma}
\label{lem:crsgrp-monobj}
Let $H$ be a crossed $\mathcal A$-group over $G$; i.e. a crossed $\mathcal A$-group equipped with a map $H\to G$ of crossed $\mathcal A$-groups.
Then, the maps $\eta_H:\ast\to H$ and $\mu_H:H\rtimes_G H\to H$ given by
\[
\begin{gathered}
\eta_H:\ast\to H(a)
\ ;\quad \ast\mapsto e_a \\
\mu_H:(H\rtimes_G H)(a)\to H(a)
\ ;\quad (x,y) \mapsto xy
\end{gathered}
\]
are maps of $\mathcal A$-sets over $G$.
Moreover, they exhibit $G$ as a monoid object in $\mathbf{Set}^{/G}_{\mathcal A}$ with respect to $\rtimes_G$.
\end{lemma}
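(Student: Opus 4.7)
The plan is to verify directly that $\eta_H$ and $\mu_H$ are morphisms in $\mathbf{Set}^{/G}_{\mathcal A}$ and that they make $H$ into a monoid in the monoidal category $(\mathbf{Set}^{/G}_{\mathcal A},\rtimes_G)$. Both parts reduce to the degreewise group structure on $H$ together with the two crossed-group axioms \ref{condCG:gdist} and \ref{condCG:cdist} applied to $H$, plus the fact that the given map $p\colon H\to G$ is a degreewise group homomorphism compatible with the actions on hom-sets.

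First I would handle $\eta_H\colon\ast\to H$. Its naturality as a map of $\mathcal A$-sets amounts to $\varphi^\ast(e_b)=e_a$ for every $\varphi\colon a\to b$, which is the first clause of \cref{lem:crsgrp-ptd}\ref{req:crsgrp-ptd:unit}; compatibility with the structure maps to $G$ is $p(e_b)=e_b$, which is immediate. The real content of the lemma lies in $\mu_H\colon H\rtimes_G H\to H$. By the definition of $\rtimes_G$, the $\mathcal A$-set structure on $H\rtimes_G H$ sends $\varphi\colon a\to b$ and $(x,y)\in H(b)\times H(b)$ to $\bigl((\varphi^y)^\ast(x),\varphi^\ast(y)\bigr)$, where the action $\varphi\mapsto\varphi^y$ is that of $y$ on $\mathcal A(a,b)$ transported through $p$; because $p$ is a map of crossed groups, this coincides with the intrinsic action of $H(b)$ on $\mathcal A(a,b)$. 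Applying $\mu_H$ afterwards yields $(\varphi^y)^\ast(x)\cdot\varphi^\ast(y)$, while $\varphi^\ast\circ\mu_H$ yields $\varphi^\ast(xy)$, and the required equality is exactly axiom \ref{condCG:cdist} for $H$. Compatibility over $G$ then follows from $p(xy)=p(x)p(y)$, which matches the structure map $H\rtimes_G H\to G\rtimes_G G\xrightarrow{\mu}G$ induced on the slice.

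For the monoid axioms, I would note that morphisms in $\mathbf{Set}^{/G}_{\mathcal A}$ coincide as soon as they coincide degreewise, so the associativity square for $\mu_H$ and the two unitality triangles for $\eta_H$ reduce in each degree $a\in\mathcal A$ to the associativity and unit axioms of the group $H(a)$, which hold by assumption. The only genuinely non-routine step is the $\mathcal A$-set naturality of $\mu_H$, and the key conceptual point is that the semidirect-product functor $\rtimes_G$ was arranged precisely so that \ref{condCG:cdist} translates into this naturality; once that translation is observed the remainder of the verification is essentially bookkeeping, so I do not expect any further obstacle.
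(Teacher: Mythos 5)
Your argument is correct and matches the paper's proof: the naturality of $\mu_H$ is exactly the computation $\mu_H(\varphi^\ast(x,y))=(\varphi^y)^\ast(x)\varphi^\ast(y)=\varphi^\ast(xy)$ via axiom \ref{condCG:cdist} together with the assumption that $H\to G$ is a map of crossed groups, and associativity and unitality are the degreewise group axioms. Your treatment of $\eta_H$ via \cref{lem:crsgrp-ptd}\ref{req:crsgrp-ptd:unit} and of compatibility over $G$ via $p(xy)=p(x)p(y)$ simply spells out what the paper leaves implicit, so there is nothing to add.
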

\begin{proof}
The first statement follows from the assumption that the structure map $H\to G$ is a map of crossed $\mathcal A$-groups and the formula
\[
\mu_H(\varphi^\ast(x,y))
=(\varphi^y)^\ast(x)\varphi^\ast(y)
= \varphi^\ast(xy)
= \varphi^\ast(\mu_H(x,y))\ .
\]
The associativity and the unitality are obvious since $H(a)$ is a group for each $a\in\mathcal A$.
\end{proof}

We denote by $\mathbf{Mon}(\mathbf{Set}^{/G}_{\mathcal A},\rtimes_G)$ the category of monoid object in $\mathbf{Set}^{/G}_{\mathcal A}$ with respect to the monoidal structure $\rtimes_G$.
In particular, when $G$ is the terminal crossed $\mathcal A$-group $\mathfrak T_{\mathcal A}$, we write
\[
\mathbf{CrsMon}_{\mathcal A}:=\mathbf{Mon}(\mathbf{Set}^{/\mathfrak T_{\mathcal A}}_{\mathcal A},\rtimes_{\mathfrak T_{\mathcal A}})\ .
\]

\begin{definition}
A \emph{crossed $\mathcal A$-monoid} is just an object of $\mathbf{CrsMon}_{\mathcal A}$; i.e. a monoid object in the  category $\mathbf{Set}^{/\mathfrak T}_{\mathcal A}$ with respect to the monoidal structure $\rtimes_{\mathfrak T_{\mathcal A}}$.
We call maps in $\mathbf{CrsMon}_{\mathcal A}$ \emph{maps of crossed $\mathcal A$-monoids}.
\end{definition}

Note that since every crossed $\mathcal A$-group can be seen as one over the terminal crossed $A$-group $\mathfrak T_{\mathcal A}$, we can think of it as a crossed $\mathcal A$-monoid by virtue of \cref{lem:crsgrp-monobj}.
Hence, it makes sense to consider the slice category $\mathbf{CrsMon}^{/G}_{\mathcal A}$.
On the other hand, in view of \cref{lem:coeff-monoidal}, the map $G\to\mathfrak T_{\mathcal A}$ also induces a functor
\begin{equation}
\label{eq:mon/-crsmon}
\mathbf{Mon}(\mathbf{Set}^{/G}_{\mathcal A},\rtimes_G)
\to\mathbf{CrsMon}_{\mathcal A}\ .
\end{equation}
It immediately follows from the definition of $\rtimes_G$ that $G$ is itself a terminal object in $\mathbf{Mon}(\mathbf{Set}^{/G}_{\mathcal A},\rtimes_G)$.
Thus, the functor \eqref{eq:mon/-crsmon} factors through
\begin{equation}
\label{eq:mon/-crsmon/}
\mathbf{Mon}(\mathbf{Set}^{/G}_{\mathcal A},\rtimes_G)
\to\mathbf{CrsMon}^{/G}_{\mathcal A}\ .
\end{equation}

\begin{proposition}
\label{prop:crsmon-slice}
The functor \eqref{eq:mon/-crsmon/} is an equivalence of categories.
\end{proposition}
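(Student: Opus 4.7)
The plan is to construct an explicit inverse functor
\[
\Psi:\mathbf{CrsMon}^{/G}_{\mathcal A}\to\mathbf{Mon}(\mathbf{Set}^{/G}_{\mathcal A},\rtimes_G)
\]
to the functor \eqref{eq:mon/-crsmon/}. The key observation underlying the construction is that for $X,Y\in\mathbf{Set}^{/G}_{\mathcal A}$, regarded as objects of $\mathbf{Set}^{/\mathfrak T_{\mathcal A}}_{\mathcal A}$ via composition with $G\to\mathfrak T_{\mathcal A}$, the $\mathcal A$-sets underlying $X\rtimes_G Y$ and $X\rtimes_{\mathfrak T_{\mathcal A}}Y$ are literally equal. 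Indeed, the action of $X(b)$ on $\mathcal A(a,b)$ used in either construction is given by $(x,\varphi)\mapsto\varphi^{p(x)}$ where $p:X\to G$, and since $G\to\mathfrak T_{\mathcal A}$ is a map of crossed $\mathcal A$-groups, the same formula also computes the action through $\mathfrak T_{\mathcal A}$. Moreover, by \cref{lem:coeff-monoidal}, the structure maps $X\rtimes_G Y\to G$ and $X\rtimes_{\mathfrak T_{\mathcal A}}Y\to\mathfrak T_{\mathcal A}$ differ only by post-composition with $G\to\mathfrak T_{\mathcal A}$.

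Given this, I define $\Psi$ as follows: for $(N,f:N\to G)\in\mathbf{CrsMon}^{/G}_{\mathcal A}$, the underlying object of $\Psi(N,f)$ in $\mathbf{Set}^{/G}_{\mathcal A}$ is $(N,f)$, while the unit $\eta_N:\ast\to N$ and the multiplication $\mu_N:N\rtimes_{\mathfrak T_{\mathcal A}}N\to N$ from the crossed monoid structure on $N$ provide candidate unit and multiplication (the latter being literally a map $N\rtimes_G N\to N$ of $\mathcal A$-sets by the identification above). The only nontrivial thing to check is that $\eta_N$ and $\mu_N$ are morphisms \emph{over} $G$, i.e.\ that
\[
f\circ\eta_N=\eta_G,\qquad f\circ\mu_N=\mu_G\circ(f\rtimes f);
\]
but these are precisely the conditions that $f$ be a morphism of monoids in $(\mathbf{Set}^{/\mathfrak T_{\mathcal A}}_{\mathcal A},\rtimes_{\mathfrak T_{\mathcal A}})$, i.e.\ a map of crossed $\mathcal A$-monoids. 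Associativity and unitality of $(\mu_N,\eta_N)$ in $(\mathbf{Set}^{/G}_{\mathcal A},\rtimes_G)$ then transfer automatically from those in $(\mathbf{Set}^{/\mathfrak T_{\mathcal A}}_{\mathcal A},\rtimes_{\mathfrak T_{\mathcal A}})$ via the same identification, and $\Psi$ is clearly functorial.

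Finally, it is straightforward to check that $\Psi$ and the functor \eqref{eq:mon/-crsmon/} are mutually inverse: both composites act as the identity on underlying $\mathcal A$-sets, on structure maps into $G$, and on unit and multiplication (one uses that $G$ is terminal in $\mathbf{Mon}(\mathbf{Set}^{/G}_{\mathcal A},\rtimes_G)$ to recover the canonical structure map into $G$ in one direction), so they coincide with the respective identity functors on objects; on morphisms, since a morphism in either category is merely a map of underlying $\mathcal A$-sets over $G$ commuting with unit and multiplication, the correspondence is tautological. The only delicate point in the whole argument is the identification of $\rtimes_G$ with $\rtimes_{\mathfrak T_{\mathcal A}}$ on $\mathcal A$-sets lying over $G$; I do not expect any serious obstacle beyond careful bookkeeping of which monoidal product is being used at each step.
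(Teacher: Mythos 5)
Your proposal is correct and follows essentially the same route as the paper: the whole proof rests on the identification $X\rtimes_G Y=X\rtimes_{\mathfrak T_{\mathcal A}}Y$ as $\mathcal A$-sets together with the observation that a $\rtimes_{\mathfrak T_{\mathcal A}}$-monoid structure on $X$ is a $\rtimes_G$-monoid structure precisely when the structure map $X\to G$ is a monoid homomorphism, which is exactly the paper's argument. Your packaging of this as an explicit inverse functor, rather than as essential surjectivity plus full faithfulness, is only a cosmetic difference.
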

\begin{proof}
To see \eqref{eq:mon/-crsmon/} is essentially surjective, put $\mathfrak T_{\mathcal A}$ to be the terminal crossed $\mathcal A$-group, and observe that we have $X\rtimes_GX=X\rtimes_{\mathfrak T_{\mathcal A}}X$ as $\mathcal A$-sets for each $X\in\mathbf{Set}^{/G}_{\mathcal A}$.
Then, it turns out that a monoid structure on $X$ with respect to $\rtimes_{\mathfrak T_{\mathcal A}}$ defines one with respect to $\rtimes_G$ if and only if the map $X\to G$ is a monoid homomorphism with respect to $\rtimes_{\mathfrak T_{\mathcal A}}$.
This implies that \eqref{eq:mon/-crsmon/} is essentially surjective.
It also follows from the similar observation that \eqref{eq:mon/-crsmon/} is fully faithful.
\end{proof}

\begin{theorem}
\label{theo:crsmon-monad}
Let $\mathcal A$ be a small category and $G$ a crossed $\mathcal A$-group.
Then, the forgetful functor $\mathbf{CrsMon}^{/G}_{\mathcal A}\to\mathbf{Set}^{/G}_{\mathcal A}$ admits a left adjoint so to form a monadic adjunction:
\[
F^G_{\mathcal A}:
\vcenter{
  \xymatrix{
    \mathbf{Set}^{/G}_{\mathcal A} \ar@{}[r]|-\perp \ar@/^.2pc/[]+R+(0,1);[r]+L+(0,1) & \mathbf{CrsMon}^{/G}_{\mathcal A} \ar@/^.2pc/[]+L+(0,-1);[l]+R+(0,-1) } }
:U^G_{\mathcal A}\ .
\]
Moreover, the associated monad on $\mathbf{Set}^{/G}_{\mathcal A}$ is finitely; i.e. it commutes with filtered colimits.
Consequently, the category $\mathbf{CrsMon}^{/G}_{\mathcal A}$ is locally presentable.
\end{theorem}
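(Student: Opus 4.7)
The plan is to follow the classical machinery for constructing and analyzing categories of monoids in a biclosed cocomplete monoidal category. The essential inputs are already in place: \cref{prop:semicrs-monoidal} asserts that $\rtimes_G$ is biclosed, so it preserves arbitrary small colimits in each variable, and $\mathbf{Set}^{/G}_{\mathcal A}$ is a presheaf topos (being a slice of one), hence is locally finitely presentable.

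First, I would construct the left adjoint explicitly by the free-monoid formula
\[
F^G_{\mathcal A}(X) := \coprod_{n\ge 0} X^{\rtimes_G n}\ ,
\]
where $X^{\rtimes_G 0}:=\ast$ is the unit of $\rtimes_G$ and $X^{\rtimes_G(n+1)}:=X^{\rtimes_G n}\rtimes_G X$. The multiplication is the evident family of concatenation maps $X^{\rtimes_G m}\rtimes_G X^{\rtimes_G n}\to X^{\rtimes_G (m+n)}$, and the unit is the coproduct inclusion of $X^{\rtimes_G 0}$. Biclosedness of $\rtimes_G$ ensures that this formula is well defined and that $F^G_{\mathcal A}$ satisfies the expected universal property $\mathbf{CrsMon}^{/G}_{\mathcal A}(F^G_{\mathcal A}(X),M)\cong\mathbf{Set}^{/G}_{\mathcal A}(X,U^G_{\mathcal A}(M))$, which I would verify by the usual formal-languages argument.

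Second, I would apply Beck's monadicity theorem to $F^G_{\mathcal A}\dashv U^G_{\mathcal A}$. The functor $U^G_{\mathcal A}$ is conservative since a map of crossed $\mathcal A$-monoids whose underlying map of $\mathcal A$-sets over $G$ is an isomorphism is manifestly an isomorphism. For the creation of $U^G_{\mathcal A}$-split coequalizers, given a parallel pair $M\rightrightarrows N$ in $\mathbf{CrsMon}^{/G}_{\mathcal A}$ whose image in $\mathbf{Set}^{/G}_{\mathcal A}$ admits a split coequalizer $q:N\to Q$, the functor $\rtimes_G$ preserves split coequalizers in each variable, hence in both variables together, so the multiplication on $N$ descends uniquely to a multiplication on $Q$; the monoid axioms on $Q$ then follow from those of $N$ by a routine diagram chase.

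Third, to see that the monad $T:=U^G_{\mathcal A}F^G_{\mathcal A}$ is finitary, I observe that in each degree $a\in\mathcal A$ the $\mathcal A$-set $X\rtimes_G Y$ is merely the cartesian product $X(a)\times Y(a)$ with a $G$-twisted transition structure. Since filtered colimits in $\mathbf{Set}^{/G}_{\mathcal A}$ are computed degreewise and commute with both cartesian products and coproducts in $\mathbf{Set}$, the functor $F^G_{\mathcal A}$, and hence $T$, commutes with filtered colimits; local presentability of $\mathbf{CrsMon}^{/G}_{\mathcal A}$ then follows from the general fact that a category monadic over a locally presentable category via a finitary monad is again locally presentable (see \cite{AdamekRosicky1994}). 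The step I expect to be the main obstacle is the creation of split coequalizers: one must verify that the transported multiplication on $Q$ is genuinely a map of $\mathcal A$-sets over $G$, which demands careful bookkeeping with the twist built into $\rtimes_G$. Fortunately, as this twist is entirely determined by the structure map to $G$ and the coequalizer is formed over $G$, the verification reduces to the kind of diagram chase already carried out in the proof of \cref{prop:semicrs-monoidal}.
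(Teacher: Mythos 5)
Your proposal is correct and takes essentially the same route as the paper: the free-monoid formula $F^G_{\mathcal A}(X)=\coprod_{n\ge 0}X^{\rtimes_G n}$ made available by biclosedness (\cref{prop:semicrs-monoidal}), finitariness from the degreewise cartesian description of $\rtimes_G$ together with the commutation of filtered colimits with finite products in $\mathbf{Set}$, and local presentability via Ad\'amek--Rosick\'y. The only differences are presentational: the paper first invokes \cref{prop:crsmon-slice} to identify $\mathbf{CrsMon}^{/G}_{\mathcal A}$ with $\mathbf{Mon}(\mathbf{Set}^{/G}_{\mathcal A},\rtimes_G)$ --- a step you use implicitly and should cite --- and it leaves the Beck-monadicity check to standard theory, which you spell out via conservativity and split coequalizers.
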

\begin{proof}
According to \cref{prop:crsmon-slice}, we have an equivalence $\mathbf{CrsMon}^{/G}_{\mathcal A}\simeq\mathbf{Mon}(\mathbf{Set}^{/G}_{\mathcal A},\rtimes_G)$ respecting the functors into $\mathbf{Set}^{/G}_{\mathcal A}$.
Since the monoidal structure $\rtimes_G$ is biclosed by \cref{prop:semicrs-monoidal}, we have an explicit description for the monad $T^G_{\mathsf{mon}}$ of free monoids; namely
\[
T^G_{\mathsf{mon}}(X) = \coprod_{n=0}^\infty X^{\rtimes^{}_G n}\ ,
\]
where $X^{\rtimes_G n}$ is the $n$-fold product $X\rtimes_G\dots\rtimes_G X$.
Thus, the first statement follows.
To see $T^G_{\mathsf{mon}}$ is finitely, it suffices to show that the functor $X\mapsto X^{\rtimes_G n}$ commutes with filtered colimits for each $n\in\mathbb N$.
Now, the colimits are computed degreewisely in $\mathbf{Set}_{\mathcal A}$, and $X^{\rtimes_G n}$ is degreewisely nothing but the $n$-fold cartesian product, it follows from the same result for the category $\mathbf{Set}$.
The last statement now follows from 2.78 in \cite{AdamekRosicky1994}.
\end{proof}

\begin{corollary}
\label{cor:crsmon-locpres-w}
For every small category $\mathcal A$, the category $\mathbf{CrsMon}_{\mathcal A}$ is locally presentable.
\end{corollary}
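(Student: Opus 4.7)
The plan is to obtain the corollary as an immediate instance of the preceding theorem, once we choose the right object to slice over. Since the theorem already delivers local presentability of $\mathbf{CrsMon}^{/G}_{\mathcal A}$ for any crossed $\mathcal A$-group $G$, the only thing to arrange is that $G$ can be chosen so that the slice coincides with $\mathbf{CrsMon}_{\mathcal A}$ itself. The natural candidate is $G=\mathfrak T_{\mathcal A}$, the terminal crossed $\mathcal A$-group, which exists by \cref{cor:crsgrp-compl}.

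First I would invoke \cref{cor:crsgrp-compl} to pick out $\mathfrak T_{\mathcal A}\in\mathbf{CrsGrp}_{\mathcal A}$, noting that by \cref{lem:crsgrp-monobj} it is automatically a crossed $\mathcal A$-monoid via its group operation. Next I would unwind the definition: $\mathbf{CrsMon}_{\mathcal A}$ is literally $\mathbf{Mon}(\mathbf{Set}^{/\mathfrak T_{\mathcal A}}_{\mathcal A},\rtimes_{\mathfrak T_{\mathcal A}})$, and under the equivalence of \cref{prop:crsmon-slice} (taken with $G=\mathfrak T_{\mathcal A}$) this is the same as $\mathbf{CrsMon}^{/\mathfrak T_{\mathcal A}}_{\mathcal A}$. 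Concretely, any crossed $\mathcal A$-monoid $M$ comes with a structure map $M\to\mathfrak T_{\mathcal A}$ in $\mathbf{Set}^{/\mathfrak T_{\mathcal A}}_{\mathcal A}$, and this map is automatically a map of crossed $\mathcal A$-monoids because $\mathfrak T_{\mathcal A}$ carries the canonical group-induced monoid structure; so slicing over $\mathfrak T_{\mathcal A}$ changes nothing.

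Finally I would apply \cref{theo:crsmon-monad} with $G=\mathfrak T_{\mathcal A}$ to conclude that $\mathbf{CrsMon}^{/\mathfrak T_{\mathcal A}}_{\mathcal A}$ is locally presentable (the theorem supplies a finitary monadic adjunction over the presheaf topos $\mathbf{Set}^{/\mathfrak T_{\mathcal A}}_{\mathcal A}$, to which the standard criterion applies). Transporting along the equivalence above gives the stated local presentability of $\mathbf{CrsMon}_{\mathcal A}$. There is no real obstacle here; the only substantive input is the existence of $\mathfrak T_{\mathcal A}$, which is precisely what \cref{sec:cocompl} was set up to provide, and the only thing to double-check is that the identification $\mathbf{CrsMon}_{\mathcal A}\simeq\mathbf{CrsMon}^{/\mathfrak T_{\mathcal A}}_{\mathcal A}$ is legitimate — this was already established in \cref{prop:crsmon-slice}.
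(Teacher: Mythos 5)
Your proposal is correct and matches the paper's intended argument: the corollary is exactly \cref{theo:crsmon-monad} specialized to $G=\mathfrak T_{\mathcal A}$ (whose existence comes from \cref{cor:crsgrp-compl}), combined with the identification $\mathbf{CrsMon}_{\mathcal A}\simeq\mathbf{CrsMon}^{/\mathfrak T_{\mathcal A}}_{\mathcal A}$ furnished by \cref{prop:crsmon-slice}, since local presentability is invariant under equivalence. No gaps.
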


\begin{corollary}
\label{cor:crsmon-bicomp}
The category $\mathbf{CrsMon}^{/G}_{\mathcal A}$ is complete and cocomplete.
Moreover, arbitrary limits and filtered colimits can be computed in the category $\mathbf{Set}^{/G}_{\mathcal A}$.
\end{corollary}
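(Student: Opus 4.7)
The plan is to derive the corollary directly from \cref{theo:crsmon-monad}, using standard facts about monadic adjunctions and finitary monads, so there is essentially no new work to do.

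First, the completeness and cocompleteness of $\mathbf{CrsMon}^{/G}_{\mathcal A}$ follow immediately from local presentability, which is the last conclusion of \cref{theo:crsmon-monad}: every locally presentable category is complete and cocomplete. Alternatively, cocompleteness can be seen as a consequence of monadicity: the base $\mathbf{Set}^{/G}_{\mathcal A}$ is cocomplete (it is a presheaf topos, via a standard equivalence, or by noting that colimits are computed in $\mathbf{Set}_{\mathcal A}$), the left adjoint $F^G_{\mathcal A}$ exists, and so by the classical theorem on categories of algebras over a cocomplete base a reflexive coequalizer argument produces colimits.

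For the statement about how to compute limits and filtered colimits, I would invoke the standard fact that a monadic functor creates all limits that exist in its codomain. Hence $U^G_{\mathcal A}:\mathbf{CrsMon}^{/G}_{\mathcal A}\to\mathbf{Set}^{/G}_{\mathcal A}$ creates arbitrary small limits, and in particular every limit in $\mathbf{CrsMon}^{/G}_{\mathcal A}$ is computed at the level of underlying $\mathcal A$-sets over $G$. Concretely, given a diagram of crossed $\mathcal A$-monoids over $G$, one takes the limit in $\mathbf{Set}^{/G}_{\mathcal A}$, observes that the multiplication and unit maps assemble uniquely through the universal property, and verifies the monoid axioms componentwise.

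For filtered colimits, the key point is that the monad $T^G_{\mathsf{mon}}$ associated with the adjunction is finitary, as established in the proof of \cref{theo:crsmon-monad}. A general categorical fact then applies: if the base category has filtered colimits and a monad on it preserves them, the forgetful functor from the category of algebras creates filtered colimits. Since $\mathbf{Set}^{/G}_{\mathcal A}$ is a presheaf topos, filtered colimits there exist and are computed degreewisely in $\mathbf{Set}_{\mathcal A}$; so the same holds for $\mathbf{CrsMon}^{/G}_{\mathcal A}$. I do not expect any real obstacle here — the entire corollary is a formal unpacking of \cref{theo:crsmon-monad} together with the monadicity theorem, and the only thing one might want to double-check is that the equivalence $\mathbf{CrsMon}^{/G}_{\mathcal A}\simeq\mathbf{Mon}(\mathbf{Set}^{/G}_{\mathcal A},\rtimes_G)$ of \cref{prop:crsmon-slice} is compatible with the forgetful functors, which is clear from its construction.
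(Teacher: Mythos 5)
Your proposal is correct and follows exactly the route the paper intends: the corollary is left as a formal consequence of \cref{theo:crsmon-monad}, with completeness and cocompleteness coming from local presentability, limits created by the monadic forgetful functor, and filtered colimits created because the associated monad is finitary. Nothing is missing.
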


We want to make use of \cref{theo:crsmon-monad} to establish a required adjunction between $\mathbf{CrsGrp}^{/G}_{\mathcal A}$ and $\mathbf{Set}^{/G}_{\mathcal A}$.
In view of \cref{lem:crsgrp-monobj}, we can think of each crossed $\mathcal A$-group over $G$ as an object of $\mathbf{CrsMon}^{/G}_{\mathcal A}$.
This assignment actually defines a functor
\begin{equation}
\label{eq:crsgrp-mon-func}
\mathbf{CrsGrp}^{/G}_{\mathcal A}\to\mathbf{CrsMon}^{/G}_{\mathcal A}\ .
\end{equation}
Indeed, every map $f:H\to K$ of crossed $\mathcal A$-groups over $G$ clearly preserves the monoid structure described in \cref{lem:crsgrp-monobj}.

\begin{proposition}
\label{prop:crsgrp-ff}
The functor \eqref{eq:crsgrp-mon-func} is fully faithful.
Moreover, a crossed $\mathcal A$-monoid object $M$ belongs to the essential image if and only if it is a degreewise group.
\end{proposition}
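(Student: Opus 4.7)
The plan is to prove the two assertions separately.

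For the full faithfulness of \eqref{eq:crsgrp-mon-func}, I would note that by \cref{lem:crsgrp-monobj} the crossed $\mathcal A$-monoid structure attached to a crossed $\mathcal A$-group $H\to G$ is literally the degreewise group multiplication and unit. Consequently, a morphism $f:H\to K$ in $\mathbf{CrsMon}^{/G}_{\mathcal A}$ between two such objects is, by unwinding the definition, a map of $\mathcal A$-sets over $G$ that is degreewise a monoid homomorphism, and therefore a group homomorphism since $H(a)$ and $K(a)$ are groups. Being a map over $G$ forces $\varphi^{f(x)} = \varphi^{p_K f(x)} = \varphi^{p_H(x)} = \varphi^x$, so the action on each $\mathcal A(a,b)$ is automatically respected. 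This is precisely the data of a map of crossed $\mathcal A$-groups, and the converse inclusion is trivial.

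For the essential image characterization, the ``only if'' direction is immediate from \cref{lem:crsgrp-monobj}. For sufficiency, let $M$ be a crossed $\mathcal A$-monoid over $G$ with structure map $p:M\to G$, and assume each $M(a)$ is a group under the given monoid operation. I would endow $M$ with a crossed $\mathcal A$-group structure by retaining the monoid structure as the group structure and defining the action of $M(a)$ on $\mathcal A(b,a)$ by $\varphi^x := \varphi^{p(x)}$. The axiom \ref{condCG:gdist} then reduces to the corresponding axiom for $G$ combined with the naturality equation $\psi^\ast\circ p = p\circ\psi^\ast$, while \ref{condCG:cdist} is exactly the condition that $\mu:M\rtimes_G M\to M$ commute with the $\mathcal A$-set structure, obtained by unwinding the definition of $\varphi^\ast$ on $M\rtimes_G M$. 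Under this crossed-group structure, $p$ becomes a map of crossed $\mathcal A$-groups (its being a degreewise group homomorphism comes from $\mu$ being a morphism over $G$), and the crossed $\mathcal A$-monoid structure of \cref{lem:crsgrp-monobj} on this crossed group coincides with the original $(\eta,\mu)$.

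The only conceptually subtle point is the observation that the crossed-group axiom \ref{condCG:cdist} has already been encoded into the very definition of the product $\rtimes_G$, so that requiring $\mu$ to be a morphism of $\mathcal A$-sets automatically reproduces it. Once this is made explicit, no substantial calculation remains; the argument is otherwise a routine bookkeeping of structures.
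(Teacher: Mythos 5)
Your argument is correct and follows essentially the same route as the paper: full faithfulness comes from the fact that the actions of crossed groups over $G$ factor through $G$ via the structure maps, so a degreewise monoid homomorphism over $G$ automatically respects the actions, and for the essential image one transports the crossed-group structure along $p:M\to G$ (with $p$ a degreewise group homomorphism because $\mu$ is a morphism over $G$). Your explicit remark that axiom \ref{condCG:cdist} is exactly the statement that $\mu$ is an $\mathcal A$-map, being built into the definition of $\rtimes_G$, is a nice way of making precise what the paper leaves as ``one can see.''
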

\begin{proof}
Notice first that the following triangle is commutative:
\[
\xymatrix{
  \mathbf{CrsGrp}^{/G}_{\mathcal A} \ar[rr] \ar[dr]_{\text{forget}} && \mathbf{CrsMon}^{/G}_{\mathcal A} \ar[dl]^{\text{forget}} \\
  & \mathbf{Set}^{/G}_{\mathcal A} & }
\]
Since both of the forgetful functors are faithful, the top one is also faithful.
To see it is also full, take two crossed $\mathcal A$-groups $H$ and $K$ over $G$ and an arbitrary homomorphism $f:H\to K$ of crossed $\mathcal A$-monoids over $G$.
We show $f$ is actually a map of crossed $\mathcal A$-groups.
Since it is clearly a map of $\mathcal A$-sets that is a degreewise group homomorphism, it suffices to show $f$ respects the actions of $H(a)$ and $K(a)$ on $\mathcal A(b,a)$ for each $a,b\in\mathcal A$.
This follows from the observation that the actions of $H(a)$ and $K(a)$ factor through $G(a)$ and that we have a commutative triangle below:
\[
\xymatrix{
  H(a) \ar[rr]^f \ar[dr] && K(a) \ar[dl] \\
  & G(a) & }
\]

We finally prove the last assertion.
Let $M$ be a crossed $\mathcal A$-monoid which is a degreewise group.
Then, it is easily verified that the structure map $M\to G$ is a degreewise group homomorphism.
Hence, for each $a,b\in\mathcal A$, the group $M(a)$ inherits an action on $\mathcal A(b,a)$ from $G(a)$.
One can see this action together with the group structure make $M$ into a crossed $\mathcal A$-group.
In addition, $M\to G$ is clearly a map of crossed $\mathcal A$-groups, so that we obtain $M\in\mathbf{CrsGrp}^{/G}_{\mathcal A}$ as required.
\end{proof}

By virtue of \cref{prop:crsgrp-ff}, we may regard $\mathbf{CrsGrp}^{/G}_{\mathcal A}$ as a full subcategory of $\mathbf{CrsMon}^{/G}_{\mathcal A}$.
In fact, it is more than just a subcategory but \emph{special one}.
Namely, it is both reflective and coreflective.

We just need one lemma.

\begin{lemma}
\label{lem:crsmon-invert}
Let $M$ be a monoid object in $\mathbf{Set}^{/G}_{\mathcal A}$, and let $\varphi:a\to b\in\mathcal A$.
Then, the map $\varphi^\ast:M(b)\to M(a)$ preserves invertible elements in the monoid structures.
\end{lemma}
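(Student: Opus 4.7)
The plan is to translate the monoid axioms on $M$ into concrete formulas in the spirit of conditions \ref{condCG:gdist} and \ref{condCG:cdist}, and then directly construct a two-sided inverse of $\varphi^\ast(y)$ when $y \in M(b)$ is invertible. First, I would unpack what it means for $\mu : M \rtimes_G M \to M$ to be a morphism in $\mathbf{Set}^{/G}_{\mathcal A}$. Using the explicit formula for the $\mathcal A$-set structure on $M \rtimes_G M$ given in \eqref{eq:semicrs-str}, naturality of $\mu$ with respect to $\varphi : a \to b$ gives the identity
\[
\varphi^\ast(xy) = (\varphi^y)^\ast(x) \cdot \varphi^\ast(y)
\qquad (x,y \in M(b))\ ,
\]
which is the monoid analogue of \ref{condCG:cdist}. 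Naturality of the unit $\eta : \ast \to M$ gives $\varphi^\ast(e_b) = e_a$.

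Next, I would record that the structure map $p : M \to G$ is automatically a degreewise monoid homomorphism, since both $\mu$ and $\eta$ live over $G$. Consequently, if $y \in M(b)$ has a two-sided inverse $y^{-1}$, then $p(y^{-1}) = p(y)^{-1}$ in the group $G(b)$, and the action symbol $\varphi^{y^{-1}} := \varphi^{p(y)^{-1}}$ is well-defined and satisfies $(\varphi^{y^{-1}})^y = \varphi = (\varphi^y)^{y^{-1}}$ from the left $G(b)$-action law.

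With these two ingredients in hand, the proof is a short calculation. Setting $u := \varphi^\ast(y)$, applying the displayed identity to the pair $(y^{-1},y)$ and using $\varphi^\ast(e_b) = e_a$ yields
\[
e_a = (\varphi^y)^\ast(y^{-1}) \cdot u\ ,
\]
so $u$ has a left inverse. To obtain a right inverse, I would apply the same identity, but now to the \emph{shifted} morphism $\varphi^y : a \to b$ in place of $\varphi$, with the pair $(y,y^{-1})$; since $(\varphi^y)^{y^{-1}} = \varphi$, this gives
\[
e_a = (\varphi^y)^\ast(e_b) = u \cdot (\varphi^y)^\ast(y^{-1})\ ,
\]
so $(\varphi^y)^\ast(y^{-1})$ is a two-sided inverse of $u = \varphi^\ast(y)$. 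The main (minor) obstacle is spotting the trick of replacing $\varphi$ by $\varphi^y$ in order to produce the right inverse; once this is done, everything follows from the two compatibility identities derived from $\mu$ and $\eta$.
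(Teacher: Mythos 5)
Your proof is correct and follows essentially the same route as the paper: both derive the identity $\varphi^\ast(xy)=(\varphi^y)^\ast(x)\varphi^\ast(y)$ from the monoid structure and exhibit $(\varphi^y)^\ast(y^{-1})$ as the two-sided inverse of $\varphi^\ast(y)$, the right-inverse equation coming from applying the identity to the shifted morphism $\varphi^y$ with $(\varphi^y)^{y^{-1}}=\varphi$. The only difference is presentational (you also spell out that $p:M\to G$ is a degreewise monoid homomorphism and that units are preserved, which the paper leaves implicit).
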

\begin{proof}
If $x\in M(b)$ is invertible in its monoid structure, we can describe the inverse of $\varphi^\ast(x)\in M(a)$ explicitly as follows:
\[
\varphi^\ast(x)^{-1} = (\varphi^x)^\ast(x^{-1})\ .
\]
Indeed, we have
\[
\begin{gathered}
\varphi^\ast(x)(\varphi^x)^\ast(x^{-1})
= (\varphi^{x^{-1}x})^\ast(x) (\varphi^x)^\ast(x^{-1})
= (\varphi^x)^\ast(x x^{-1})
= 1
\\
(\varphi^x)^\ast(x^{-1}) \varphi^\ast(x)
= \varphi^\ast(x x^{-1})
= 1
\end{gathered}
\]
\end{proof}

\begin{theorem}
\label{theo:crsgrp-birefl}
Let $\mathcal A$ be a small category, and let $G$ be a crossed $\mathcal A$-group.
Then, the subcategory $\mathbf{CrsGrp}^{/G}_{\mathcal A}\subset\mathbf{CrsMon}^{/G}_{\mathcal A}$ is closed under arbitrary (small) limits and colimits.
Consequently, $\mathbf{CrsGrp}^{/G}_{\mathcal A}\subset\mathbf{CrsMon}^{/G}_{\mathcal A}$ is both reflective and coreflective as a subcategory; i.e. the inclusion admits both left and right adjoints.
\end{theorem}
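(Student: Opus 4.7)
The plan is to use \cref{prop:crsgrp-ff} to view $\mathbf{CrsGrp}^{/G}_{\mathcal A}$ as the full subcategory of $\mathbf{CrsMon}^{/G}_{\mathcal A}$ spanned by those crossed monoids that are degreewise groups, and then to exhibit both adjoints to the inclusion $\iota$; closure under all small limits and colimits will follow.

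For the right adjoint (the coreflection), I would define, for each crossed monoid $M$ over $G$, an $\mathcal A$-subset $M^\times\subseteq M$ by letting $M^\times(a)$ be the group of units of the monoid $M(a)$ for each $a\in\mathcal A$. \cref{lem:crsmon-invert} is tailored exactly for this step: it guarantees that every structure map $\varphi^\ast\colon M(b)\to M(a)$ restricts to $M^\times$, so $M^\times$ is a genuine sub-$\mathcal A$-set of $M$. Since the identity is a unit and a product of units is a unit, $M^\times$ inherits the monoid structure from $M$, hence defines a sub-crossed-monoid over $G$, and by construction it is degreewise a group; so $M^\times\in\mathbf{CrsGrp}^{/G}_{\mathcal A}$ by \cref{prop:crsgrp-ff}. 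The adjunction $\mathbf{CrsMon}^{/G}_{\mathcal A}(\iota H,M)\cong\mathbf{CrsGrp}^{/G}_{\mathcal A}(H,M^\times)$ is then immediate: any monoid homomorphism out of a degreewise group lands in the degreewise units. Being a left adjoint, $\iota$ now preserves colimits; combined with the cocompleteness of $\mathbf{CrsGrp}^{/G}_{\mathcal A}$, which is a slice of the cocomplete $\mathbf{CrsGrp}_{\mathcal A}$ of \cref{prop:crsgrp-colim}, this gives closure of $\mathbf{CrsGrp}^{/G}_{\mathcal A}\subset\mathbf{CrsMon}^{/G}_{\mathcal A}$ under colimits.

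Closure under limits is more direct: by \cref{cor:crsmon-bicomp}, arbitrary limits in $\mathbf{CrsMon}^{/G}_{\mathcal A}$ are computed in $\mathbf{Set}^{/G}_{\mathcal A}$, i.e.\ degreewise in sets with the induced monoid structure. Since a set-theoretic limit of a diagram of groups is again a group, a limit of crossed groups taken in $\mathbf{CrsMon}^{/G}_{\mathcal A}$ is degreewise a group and so lies in $\mathbf{CrsGrp}^{/G}_{\mathcal A}$ by \cref{prop:crsgrp-ff}. The same reasoning shows $\iota$ also preserves filtered colimits. Both $\mathbf{CrsMon}^{/G}_{\mathcal A}$ (by \cref{theo:crsmon-monad}) and $\mathbf{CrsGrp}^{/G}_{\mathcal A}$ (via \cref{theo:crsgrp-locpres} and the fact that slices of locally presentable categories are locally presentable) are locally presentable; applying the adjoint functor theorem for locally presentable categories to the limit- and filtered-colimit-preserving functor $\iota$ produces its left adjoint, i.e.\ the reflection.

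The main obstacle is the construction of the coreflection $M\mapsto M^\times$, whose substantive content is that degreewise inversion is compatible with the $\mathcal A$-set structure on $M$; but this is precisely what \cref{lem:crsmon-invert} supplies, via the explicit formula $\varphi^\ast(x)^{-1}=(\varphi^x)^\ast(x^{-1})$. Beyond that point, the argument reduces to the classical observation that groups form both a reflective and a coreflective subcategory of monoids, transported pointwise through the degreewise computation of limits and filtered colimits granted by \cref{cor:crsmon-bicomp}.
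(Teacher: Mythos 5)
Your proposal is correct in substance and follows essentially the same route as the paper: the coreflection is exactly the paper's functor $J$ sending $M$ to its degreewise units, with \cref{lem:crsmon-invert} supplying compatibility with the $\mathcal A$-set structure; closure under colimits is deduced from this coreflectivity plus cocompleteness of $\mathbf{CrsGrp}^{/G}_{\mathcal A}$; and the reflection comes from an adjoint functor theorem. Two small remarks. First, in the limits step your gloss ``arbitrary limits in $\mathbf{CrsMon}^{/G}_{\mathcal A}$ are computed degreewise in sets'' is not literally what \cref{cor:crsmon-bicomp} gives: limits in the slice $\mathbf{Set}^{/G}_{\mathcal A}$ are degreewise limits only for \emph{connected} diagrams (the terminal object is $G$ itself, products are fiber products over $G$). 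The paper handles this by replacing the diagram $H_\bullet$ with its extension $H^\lhd_\bullet$ having $G$ at the cone point, so the slice limit becomes a connected limit computed degreewise; since $G$ and all structure maps $H_i\to G$ are degreewise groups and group homomorphisms, the degreewise limit is again a group, and your conclusion stands after this one-line correction. Second, for the reflection you invoke local presentability of both categories and the accessible adjoint functor theorem (using that the inclusion preserves limits and filtered colimits), whereas the paper cites the General Adjoint Functor Theorem together with \cref{prop:crsgrp-colim}; your variant is a perfectly good alternative and arguably makes the solution-set issue explicit via accessibility, at the cost of importing \cref{theo:crsgrp-locpres}.
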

\begin{proof}
We first show $\mathbf{CrsGrp}^{/G}_{\mathcal A}\subset\mathbf{CrsMon}^{/G}_{\mathcal A}$ is coreflective.
The right adjoint functor $J:\mathbf{CrsMon}^{/G}_{\mathcal A}\to\mathbf{CrsGrp}^{/G}_{\mathcal A}$ is described as follows:
for a crossed $\mathcal A$-monoid $M$ over $G$, the underlying $\mathcal A$-set of $J(M)$ is degreewisely the group of invertible elements of $M(a)$, which actually forms an $\mathcal A$-subset of $M$ thanks to \cref{lem:crsmon-invert}.
It is easily verified that the composition $J(M)\hookrightarrow M\to G$ and the restricted operations exhibit $J(M)$ as a crossed $\mathcal A$-group over $G$.
Note that, for a crossed $\mathcal A$-group $H$ over $G$, each map $f:G\to H$ of crossed $\mathcal A$-monoids preserves invertible elements so it factors through $J(M)\hookrightarrow M$.
Moreover, since $J(M)\to M$ is a monomorphism, this factorization is unique.
This implies we have a natural bijection
\[
\mathbf{CrsMon}^{/G}_{\mathcal A}(H,M)
\cong\mathbf{CrsGrp}^{/G}_{\mathcal A}(H,J(M))\ .
\]
Hence, $J$ is right adjoint to the inclusion.

Next, we prove the closedness properties.
As for colimits, it follows from the coreflectivity proved above, \cref{prop:crsgrp-colim}, and \cref{cor:crsmon-bicomp}.
To see it is also the case for limits, let $H_\bullet:\mathcal I\to\mathbf{CrsGrp}^{/G}_{\mathcal A}$ be a functor from a small category $\mathcal I$.
Since the category $\mathbf{CrsMon}^{/G}_{\mathcal A}$ is complete by \cref{cor:crsmon-bicomp}, we can take the limit in the category $\mathbf{CrsMon}^{/G}_{\mathcal A}$ and write $H_\infty := \lim_{i\in\mathcal I} H_i$.
We have to show $H_\infty$ is a crossed $\mathcal A$-group.
Note that $H_\infty$ is also the limit in $\mathbf{CrsMon}^{/G}_{\mathcal A}$ of the extended diagram $H^\lhd_\bullet:\mathcal I^\lhd=\{v\}\star\mathcal I\to\mathbf{CrsGrp}^{/G}_{\mathcal A}$ given by
\[
H^\lhd_x=
\begin{cases}
H_x \quad& x\in\mathcal I \\
G\quad & x=v\ .
\end{cases}
\]
In view of \cref{cor:crsmon-bicomp}, limits of connected diagrams in $\mathbf{CrsMon}^{/G}_{\mathcal A}$ is computed degreewisely, so for each $a\in\mathcal A$, we have
\[
H_\infty(a)\cong \lim_{x\in\mathcal I^\lhd}H^\lhd_x(a)\ .
\]
The right hand side is a limit of groups, so it is again a group.
It follows that $H_\infty$ is a degreewise group so it belongs to $\mathbf{CrsGrp}^{/G}_{\mathcal A}$ by virtue of \cref{prop:crsgrp-ff}.

To see the last statement, we have to show the embedding $\mathbf{CrsGrp}^{/G}_{\mathcal A}\to\mathbf{CrsMon}^{/G}_{\mathcal A}$ admits a left adjoint.
This follows from the first assertion, \cref{prop:crsgrp-colim}, and the \emph{General Adjoint Functor Theorem}.
\end{proof}

In summary, we obtain the following adjunctions:
\[
\xymatrix{
  \mathbf{CrsGrp}_{\mathcal A} \ar@{^(->}[]+R+(1,0);[r] \ar@{}[r]^-\perp_-\perp & \mathbf{CrsMon}_{\mathcal A} \ar@/^.3pc/[]+DL;[l]+DR \ar@/_.3pc/[]+UL;[l]+UR \ar@/^.2pc/[]+R+(0,1);[r]+L+(0,1) \ar@{}[r]|-\perp & \mathbf{Set}_{\mathcal A/S} \ar@/^.2pc/[]+L+(0,-1);[l]+R+(0,-1) }
\]
where the right one is monadic, and each arrow in right direction creates arbitrary limits and filtered colimits.

\begin{remark}
If one knows a terminal crossed $\mathcal A$-group $\mathfrak T_{\mathcal A}$, then he can prove \cref{theo:crsgrp-birefl} in a more concrete way.
Indeed, it is a consequence of \cref{theo:crsmon-monad}, \cref{theo:crsgrp-birefl} above, and Corollary 2.4 in \cite{MakkaiPitts1987}.
\end{remark}

\section{Base change of crossed monoids}
\label{sec:basechange}

It is often the case that the category $\mathcal A$ is related to another category, say $\widetilde{\mathcal A}$, by a functor $\varphi:\mathcal A\to\widetilde{\mathcal A}$ in nature.
Such a functor gives rise to adjunctions
\[
\varphi_!,\varphi_\ast:
\vcenter{
  \xymatrix{
    \mathbf{Set}_{\mathcal A} \ar[r]^-\perp_-\perp \ar@/^.3pc/[]+UR;[r]+UL \ar@/_.3pc/[]+DR;[r]+DL & \mathbf{Set}_{\widetilde{\mathcal A}} }}
:\varphi^\ast\ ,
\]
where $\varphi_!$ and $\varphi_\ast$ are the left and right Kan extensions of $\varphi$ respectively along the Yoneda embedding.
More generally, for an $\widetilde{\mathcal A}$-set $\widetilde S$, the functor $\varphi^\ast$ induces a functor
\[
\varphi^\ast_{\widetilde S}:\mathbf{Set}_{\widetilde{\mathcal A}}^{/\widetilde S}\to\mathbf{Set}_{\mathcal A}^{/\varphi^\ast\widetilde S}\ ,
\]
which also has both left and right adjoint functors; namely, define $\varphi^{\widetilde S}_!,\varphi^{\widetilde S}_\ast:\mathbf{Set}_{\mathcal A}^{/\varphi^\ast\widetilde S}\to\mathbf{Set}_{\widetilde{\mathcal A}}^{/\widetilde S}$ as follows: for $X\in\mathbf{Set}_{\mathcal A}^{/\varphi^\ast\widetilde S}$, $\varphi^{\widetilde S}_!X:=\varphi_!X$ with the adjoint morphism $\varphi_!X\to S$, and $\varphi^{\widetilde S}_\ast X$ is the object in the pullback square below:
\[
\xymatrix{
  \varphi^{\widetilde S}_\ast X \ar[d] \ar[r] \ar@{}[dr]|(.4)\pbcorner & \varphi_\ast X \ar[d] \\
  \widetilde S \ar[r] & \varphi_\ast \varphi^\ast\widetilde S }
\]
Then, we obtain adjunctions
\[
\varphi^{\widetilde S}_!,\varphi^{\widetilde S}_\ast:
\vcenter{
  \xymatrix{
    \mathbf{Set}_{\mathcal A}^{/\varphi^\ast\widetilde S} \ar@/^.4pc/[]+R+(0,2);[r]+L+(0,2) \ar@/_.4pc/[]+R+(0,-2);[r]+L+(0,-2) & \mathbf{Set}_{\widetilde{\mathcal A}}^{/\widetilde S} \ar[l]^-\perp_-\perp }}
:\varphi^\ast_{\widetilde S}\ ,
\]
The lifts of these functors to the categories of crossed monoids are the central interest in this section.

Now, fix a functor $\varphi:\mathcal A\to\widetilde{\mathcal A}$, and let $\widetilde G$ be a crossed $\widetilde{\mathcal A}$-group.
The notion of crossed monoids anyway arises from the monoidal structure on the category of presheaves as mentioned in \cref{sec:monobj}, we investigate how the functor $\varphi^\ast_{\widetilde G}$ above relates monoidal structures.
At the first glance, one can notice that, for this question to make sense, we have to give $\varphi^\ast\widetilde G$ a structure of $\mathcal A$-groups.
Unfortunately, it immediately turns out that there is no canonical way to do this, so we give up the general cases and concentrate only on \emph{faithful} functors.

\begin{definition}
Let $\varphi:\mathcal A\to\widetilde{\mathcal A}$ be a faithful functor between small categories.
Then, for a crossed $\widetilde{\mathcal A}$-group $\widetilde G$, $\varphi$ is said to be \emph{$\widetilde G$-stable} if for each $a,b\in\mathcal A$, the image of the map
\[
\varphi:\mathcal A(a,b)\to\widetilde{\mathcal A}(\varphi(a),\varphi(b))
\]
is $\widetilde G(b)$-stable.
\end{definition}

For example, fully faithful functors are stable for every crossed group.
The following result is straightforward.

\begin{lemma}
\label{lem:pb-crsgrp}
Let $\varphi:\mathcal A\to\widetilde{\mathcal A}$ be a faithful functor between small categories.
Suppose $\widetilde G$ is a crossed $\widetilde{\mathcal A}$-group such that $\varphi$ is $\widetilde G$-stable.
Then, the $\mathcal A$-set $\varphi^\ast\widetilde G$ admits a unique structure of crossed $\mathcal A$-groups such that
\begin{enumerate}[label={\rm(\roman*)}]
  \item for each $a\in\mathcal A$, the group structure on $\varphi^\ast\widetilde G(a)=\widetilde G(\varphi(a))$ agrees with the original one;
  \item for each $a,b\in\mathcal A$, the map
\[
\varphi:\mathcal A(a,b)\to\widetilde{\mathcal A}(\varphi(a),\varphi(b))
\]
is $\varphi^\ast\widetilde G(b)$-equivariant.
\end{enumerate}
\end{lemma}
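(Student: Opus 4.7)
The plan is to define the required structure by restriction along $\varphi$, using $\widetilde G$-stability for existence and faithfulness for uniqueness. The $\mathcal A$-set structure on $\varphi^\ast\widetilde G$ is fixed by the pullback, and condition (i) pins down the degreewise group structure. Thus the only genuine datum to construct is the left action of $\varphi^\ast\widetilde G(b) = \widetilde G(\varphi(b))$ on $\mathcal A(a,b)$ for each $a,b\in\mathcal A$.

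To define this action, given $\psi: a\to b$ in $\mathcal A$ and $x\in\widetilde G(\varphi(b))$, $\widetilde G$-stability of $\varphi$ ensures that $\varphi(\psi)^x$ lies in the image of $\varphi: \mathcal A(a,b)\to\widetilde{\mathcal A}(\varphi(a),\varphi(b))$, and faithfulness makes the preimage unique. I declare $\psi^x$ to be this unique preimage. Condition (ii) is built into the construction, and the axioms of a group action (preservation of unit and compatibility with multiplication) transfer from the action of $\widetilde G(\varphi(b))$ on $\widetilde{\mathcal A}(\varphi(a),\varphi(b))$ via injectivity of $\varphi$.

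Next, I would verify the crossed-group axioms \ref{condCG:gdist} and \ref{condCG:cdist} for $\varphi^\ast\widetilde G$. Both sides of each axiom are elements of morphism sets of $\mathcal A$ (or of $\varphi^\ast\widetilde G(a)$, which is literally $\widetilde G(\varphi(a))$). Applying $\varphi$ where appropriate converts each equation into the corresponding axiom for $\widetilde G$ at the morphism $\varphi(\psi)$ and the element $x$, using the evident compatibilities $\varphi(\psi^x) = \varphi(\psi)^x$ and $\varphi(\psi)^\ast(x) = \psi^\ast(x)$ that follow from the very definition of $\varphi^\ast$. Since these axioms hold for $\widetilde G$ by assumption, and since $\varphi$ is faithful, the original equations hold in $\mathcal A$ (respectively in $\varphi^\ast\widetilde G(a)$, where nothing needs transfer).

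For uniqueness, any crossed $\mathcal A$-group structure on $\varphi^\ast\widetilde G$ satisfying (i) and (ii) is forced by the equivariance clause in (ii) to obey $\varphi(\psi^x) = \varphi(\psi)^x$, and faithfulness then determines $\psi^x$ uniquely. The main obstacle is nothing conceptual but the bookkeeping of transferring each relation across $\varphi$; however, every constraint becomes an identity among morphisms in the image of $\varphi$, where injectivity reduces it immediately to the known statement in $\widetilde{\mathcal A}$.
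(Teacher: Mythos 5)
Your proof is correct and is exactly the intended argument, which the paper omits as ``straightforward'': define $\psi^x$ as the unique $\varphi$-preimage of $\varphi(\psi)^x$ (existence by $\widetilde G$-stability, uniqueness by faithfulness), and transfer the action and crossed-group axioms \ref{condCG:gdist}, \ref{condCG:cdist} along $\varphi$ using injectivity. Nothing further is needed.
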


In the following, we always regard $\varphi^\ast\widetilde G$ as a crossed $\mathcal A$-group with the structure in \cref{lem:pb-crsgrp} whenever $\varphi$ is $\widetilde G$-stable faithful functor.
Hence, the monoidal structure $\rtimes_{\varphi^\ast\widetilde G}$ on the category $\mathbf{Set}_{\mathcal A}^{/\varphi^\ast\widetilde G}$ makes sense (see \cref{sec:monobj}).

\begin{proposition}
\label{prop:pb-monoidal}
Let $\varphi:\mathcal A\to\widetilde{\mathcal A}$ be a faithful functor between small categories which is $\widetilde G$-stable for a crossed $\widetilde{\mathcal A}$-group $\widetilde G$.
Then, the induced functor
\[
\varphi^\ast_{\widetilde G}:\mathbf{Set}_{\widetilde{\mathcal A}}^{/\widetilde G}\to\mathbf{Set}_{\mathcal A}^{/\varphi^\ast\widetilde G}
\]
is monoidal with respect to the monoidal structures $\rtimes_{\widetilde G}$ and $\rtimes_{\varphi^\ast\widetilde G}$.
\end{proposition}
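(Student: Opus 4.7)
The plan is to exhibit the required comparison isomorphism as the identity on underlying sets and then to check the two monoidal data (structure map into the base and $\mathcal A$-action) against each other using the very definition of the crossed group structure on $\varphi^\ast\widetilde G$ from \cref{lem:pb-crsgrp}. Concretely, for $X,Y\in\mathbf{Set}_{\widetilde{\mathcal A}}^{/\widetilde G}$ and $a\in\mathcal A$ one has the tautological equality of sets
\[
\varphi^\ast_{\widetilde G}(X\rtimes_{\widetilde G}Y)(a) = X(\varphi(a))\times Y(\varphi(a)) = \bigl(\varphi^\ast_{\widetilde G}X\rtimes_{\varphi^\ast\widetilde G}\varphi^\ast_{\widetilde G}Y\bigr)(a)\ ,
\]
and I will argue that this identity is in fact a map of $\mathcal A$-sets over $\varphi^\ast\widetilde G$.

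First I would match the structure maps into the base: both sides send $(x,y)$ to $p(x)\,q(y)\in\widetilde G(\varphi(a)) = \varphi^\ast\widetilde G(a)$, where $p:X\to\widetilde G$ and $q:Y\to\widetilde G$ are the chosen structure maps; hence they agree. Next I would compare the $\mathcal A$-set structures. For $\psi:a\to b\in\mathcal A$, the formula for $\rtimes_{\widetilde G}$ in $\widetilde{\mathcal A}$ gives
\[
\psi^\ast(x,y) = \bigl((\varphi(\psi)^y)^\ast(x),\varphi(\psi)^\ast(y)\bigr)\ ,
\]
whereas the formula for $\rtimes_{\varphi^\ast\widetilde G}$ gives
\[
\psi^\ast(x,y) = \bigl((\psi^y)^\ast(x),\psi^\ast(y)\bigr)\ ,
\]
where in the second line $\psi^y$ denotes the action of $y\in\varphi^\ast\widetilde G(b)=\widetilde G(\varphi(b))$ on $\psi\in\mathcal A(a,b)$ in the sense of the crossed $\mathcal A$-group structure on $\varphi^\ast\widetilde G$. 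By \cref{lem:pb-crsgrp}, this action is characterized by the requirement that $\varphi:\mathcal A(a,b)\to\widetilde{\mathcal A}(\varphi(a),\varphi(b))$ be equivariant, so $\varphi(\psi^y)=\varphi(\psi)^y$; the two formulas therefore coincide. This checks that $\varphi^\ast_{\widetilde G}$ sends the $\rtimes_{\widetilde G}$-product to the $\rtimes_{\varphi^\ast\widetilde G}$-product as objects of $\mathbf{Set}_{\mathcal A}^{/\varphi^\ast\widetilde G}$, and naturality in $X$ and $Y$ is immediate from the degreewise identity description.

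For the unit, the monoidal unit of $\rtimes_{\widetilde G}$ is the terminal $\widetilde{\mathcal A}$-set $\ast$ equipped with the unit section $\ast\to\widetilde G$; its image under $\varphi^\ast_{\widetilde G}$ is $\ast\to\varphi^\ast\widetilde G$, which is precisely the unit of $\rtimes_{\varphi^\ast\widetilde G}$, since $\varphi^\ast$ preserves terminal objects and the unit section of $\varphi^\ast\widetilde G$ is the $\varphi$-restriction of that of $\widetilde G$ by the first clause of \cref{lem:pb-crsgrp}. Finally, the associator and unitors of both monoidal structures were shown in \cref{prop:semicrs-monoidal} to be induced by the evident degreewise bijections $X(a)\times Y(a)\times Z(a)\cong X(a)\times Y(a)\times Z(a)$ and $\ast\times X(a)\cong X(a)$; since our comparison isomorphism is the identity on underlying sets degreewisely, the two hexagon and triangle diagrams reduce to the same identities on each level, so the coherence axioms hold automatically.

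I expect the main subtlety to be purely notational: keeping straight that the symbol $\psi^y$ means different things on the two sides of the comparison and that the $\widetilde G$-stability hypothesis is exactly what guarantees the two meanings agree under $\varphi$. Once that point is digested, the rest of the proof is bookkeeping with no further input.
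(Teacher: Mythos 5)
Your proposal is correct and follows essentially the same route as the paper: identify $\varphi^\ast_{\widetilde G}(X\rtimes_{\widetilde G}Y)$ with $\varphi^\ast_{\widetilde G}X\rtimes_{\varphi^\ast\widetilde G}\varphi^\ast_{\widetilde G}Y$ degreewise as the identity on sets, and use the equivariance clause of \cref{lem:pb-crsgrp} (i.e. $\varphi(\psi^y)=\varphi(\psi)^y$, which is where $\widetilde G$-stability enters) to see the two $\mathcal A$-set structures agree. Your extra checks of the structure maps into the base, the unit, and the coherence axioms are fine and only make explicit what the paper leaves implicit.
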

\begin{proof}
Note that, for each $\widetilde X,\widetilde Y\in\mathbf{Set}_{\widetilde{\mathcal A}}^{/\widetilde G}$, and for each $a\in\mathcal A$, we have a canonical identification
\[
\begin{multlined}
\varphi^\ast(\widetilde X\rtimes_{\widetilde G}\widetilde Y)(a)
= (\widetilde X\rtimes_{\widetilde G}\widetilde Y)(\varphi(a))
= \widetilde X(\varphi(a))\times \widetilde Y(\varphi(a)) \\[1ex]
= \varphi^\ast\widetilde X(a)\times \varphi^\ast\widetilde Y(a)
= (\varphi^\ast\widetilde\rtimes_{\varphi^\ast\widetilde G}\varphi^\ast\widetilde Y)(a)\ .
\end{multlined}
\]
On the other hand, for each morphism $\varphi:a\to b\in\mathcal A$, the induced map
\[
\varphi^\ast:\widetilde X(\varphi(b))\times\widetilde Y(\varphi(b))
\to\widetilde X(\varphi(a))\times\widetilde Y(\varphi(a))
\]
is, no matter whether it is considered in $\varphi^\ast(\widetilde X\rtimes_{\widetilde G}\widetilde Y)$ or $\varphi^\ast\widetilde X\rtimes_{\varphi^\ast\widetilde G}\varphi^\ast\widetilde Y$, given by
\[
\varphi^\ast(x,y)
= \varphi(\varphi)^\ast(x,y)
= ((\varphi(\varphi)^y)^\ast(x),\varphi(\varphi)^\ast(y))
= (\varphi(\varphi^y)^\ast(x),\varphi(\varphi)^\ast(y))\ .
\]
Thus, we obtain a canonical identification $\varphi^\ast(\widetilde X\rtimes_{\widetilde G}\widetilde Y)=\varphi^\ast\widetilde X\rtimes_{\varphi^\ast \widetilde G}\varphi^\ast\widetilde Y$, which makes $\varphi$ into a monoidal functor.
\end{proof}

\begin{corollary}
\label{cor:Ran-crsmon}
Let $\varphi:\mathcal A\to\widetilde{\mathcal A}$ and $\widetilde G\in\mathbf{CrsGrp}_{\widetilde{\mathcal A}}$ be as in \cref{prop:pb-monoidal}.
Then, the adjunction $\varphi^\ast_{\widetilde G}\dashv\varphi^{\widetilde G}_\ast$ induces an adjunction
\[
\varphi^\natural_{\widetilde G}:
\vcenter{
  \xymatrix{
    \mathbf{CrsMon}^{/\widetilde G}_{\widetilde{\mathcal A}} \ar@{}[r]|-\perp \ar@/^.2pc/[]+R+(0,1);[r]+L+(0,1) & \mathbf{CrsMon}^{/\varphi^\ast\widetilde G}_{\mathcal A} \ar@/^.2pc/[]+L+(0,-1);[l]+R+(0,-1) } }
:\varphi^{\widetilde G}_\sharp\ .
\]
\end{corollary}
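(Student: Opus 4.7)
The plan is to promote the adjunction $\varphi^\ast_{\widetilde G}\dashv\varphi^{\widetilde G}_\ast$ from the slice categories of $\mathcal A$-sets to the categories of monoid objects, exploiting the fact that \cref{prop:pb-monoidal} already exhibits $\varphi^\ast_{\widetilde G}$ as a (strictly) monoidal functor with respect to $\rtimes_{\widetilde G}$ and $\rtimes_{\varphi^\ast\widetilde G}$.

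First I would observe that, since $\varphi^\ast_{\widetilde G}$ preserves the monoidal product and the unit on the nose, each monoid $(\widetilde M,\mu,\eta)$ in $(\mathbf{Set}_{\widetilde{\mathcal A}}^{/\widetilde G},\rtimes_{\widetilde G})$ is carried functorially to a monoid structure on $\varphi^\ast_{\widetilde G}\widetilde M$ with multiplication
\[
\varphi^\ast_{\widetilde G}\widetilde M\rtimes_{\varphi^\ast\widetilde G}\varphi^\ast_{\widetilde G}\widetilde M
=\varphi^\ast_{\widetilde G}(\widetilde M\rtimes_{\widetilde G}\widetilde M)
\xrightarrow{\varphi^\ast_{\widetilde G}\mu}\varphi^\ast_{\widetilde G}\widetilde M
\]
and the evident unit, and monoid morphisms are sent to monoid morphisms. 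This defines the functor $\varphi^\natural_{\widetilde G}$ on the nose.

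For the right adjoint I would appeal to Kelly's doctrinal-adjunction principle: the right adjoint of a strong monoidal functor inherits a canonical lax monoidal structure whose coherence maps are the mates of (the inverses of) the strong structure maps of $\varphi^\ast_{\widetilde G}$. Since lax monoidal functors transport monoid objects, this gives a functor $\varphi^{\widetilde G}_\sharp:\mathbf{CrsMon}^{/\varphi^\ast\widetilde G}_{\mathcal A}\to\mathbf{CrsMon}^{/\widetilde G}_{\widetilde{\mathcal A}}$. By the same machinery, the unit and counit of $\varphi^\ast_{\widetilde G}\dashv\varphi^{\widetilde G}_\ast$ are monoid homomorphisms with respect to the lifted structures, so the triangle identities descend and witness $\varphi^\natural_{\widetilde G}\dashv\varphi^{\widetilde G}_\sharp$.

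As an alternative that matches the style advertised in the introduction, one may instead apply the \emph{Adjoint Lifting Theorem} to the commutative square whose vertical arrows are the monadic forgetful functors of \cref{theo:crsmon-monad} and whose bottom arrow is $\varphi^\ast_{\widetilde G}$; the top arrow $\varphi^\natural_{\widetilde G}$ is built as above, and the theorem then produces the right adjoint at no additional cost. I do not expect a serious obstacle in either route, since the real content is already packaged in \cref{prop:pb-monoidal}; the only delicate point is confirming that the induced lax structure on $\varphi^{\widetilde G}_\ast$ makes the (co)unit components genuine morphisms in the monoid categories, which is a routine diagram chase using the strict monoidality of $\varphi^\ast_{\widetilde G}$.
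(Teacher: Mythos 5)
Your main argument is correct and is essentially the paper's own proof: the paper deduces the corollary from \cref{prop:pb-monoidal} together with the general fact (cited from Porst, Section 2.3) that a monoidal functor admitting a right adjoint induces a right adjoint on categories of monoid objects, which is exactly the doctrinal-adjunction transfer of a lax monoidal structure to $\varphi^{\widetilde G}_\ast$ that you describe. One caveat: your ``alternative route'' is not viable as stated, because the Adjoint Lifting Theorem invoked in the paper (\cref{theo:adj-lift}) lifts \emph{left} adjoints along the monadic forgetful functors — it is what the paper uses later to construct $\varphi^{\widetilde G}_\flat$ — and does not by itself produce the right adjoint $\varphi^{\widetilde G}_\sharp$.
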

\begin{proof}
The statement is a consequence of \cref{prop:pb-monoidal} and the fact that, for a monoidal functor $F:\mathcal C\to\mathcal D$, the induced functor $\mathbf{Mon}(\mathcal C)\to\mathbf{Mon}(\mathcal D)$ admits a right adjoint as soon as so does $F$.
The reader will find the full proof of this in the section 2.3 in \cite{Porst2008}.
\end{proof}

Note that the fact we used in the proof of \cref{cor:Ran-crsmon} not only shows the existence of the adjunction but also provides a way to compute it.
Indeed, if $\widetilde M$ is a crossed $\widetilde{\mathcal A}$-monoid over $\widetilde G$, then $\varphi^\ast_{\widetilde G}\widetilde M$ has a canonical structure of crossed $\mathcal A$-monoids in view of \cref{prop:pb-monoidal}.
On the other hand, since the functor $\varphi^{\widetilde G}_\ast$ is right adjoint to the monoidal functor $\varphi^\ast_{\widetilde G}$, it admits a structure of lax monoidal functors; namely we have $\varphi^{\widetilde G}_\ast(\ast)\cong\ast$ and a natural transformation
\[
\mu:\varphi^{\widetilde G} X\rtimes_{\widetilde G}\varphi^{\widetilde G} Y
\to \varphi^{\widetilde G}_\ast(X\rtimes_{\varphi^\ast\widetilde G}Y)
\]
subject to an appropriate coherence conditions.
Then, for each crossed $\mathcal A$-monoid $M$ over $\varphi^\ast\widetilde G$, $\varphi^{\widetilde G}_\ast M$ admits a canonical structure of crossed $\widetilde{\mathcal A}$-monoids over $\widetilde G$ as
\[
\begin{gathered}
\varphi^{\widetilde G}_\ast M\rtimes_{\widetilde G}\varphi^{\widetilde G}_\ast M
\xrightarrow\mu \varphi^{\widetilde G}_\ast(M\rtimes_{\varphi^\ast\widetilde G} M)
\to\varphi^{\widetilde G}_\ast M \\
\ast\cong\varphi^{\widetilde G}_\ast(\ast)\to \varphi^{\widetilde G}_\ast M\ .
\end{gathered}
\]
This actually gives the right adjoint $\varphi^{\widetilde G}_\sharp$ in the adjunction in \cref{cor:Ran-crsmon}.

\begin{proposition}
\label{prop:Ranff-crsmon}
Let $\varphi:\mathcal A\to\widetilde{\mathcal A}$ be a fully faithful functor, and let $\widetilde G$ be a crossed $\widetilde{\mathcal A}$-group.
Then, the functor $\varphi^{\widetilde G}_\sharp:\mathbf{CrsMon}^{/\varphi^\ast\widetilde G}_{\mathcal A}\to\mathbf{CrsMon}^{/\widetilde G}_{\widetilde{\mathcal A}}$ induced by the right Kan extension of $\varphi$ is fully faithful.
\end{proposition}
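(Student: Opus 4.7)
The plan is to reduce full faithfulness of $\varphi^{\widetilde G}_\sharp$ to the analogous property for the right Kan extension on the underlying sliced presheaf categories, and then transfer it back using that the forgetful functor $\mathbf{CrsMon}^{/\varphi^\ast\widetilde G}_{\mathcal A}\to\mathbf{Set}^{/\varphi^\ast\widetilde G}_{\mathcal A}$ is monadic, hence conservative, by \cref{theo:crsmon-monad}. Since a right adjoint is fully faithful precisely when its counit is invertible, the target is to show that the counit of $\varphi^\natural_{\widetilde G}\dashv\varphi^{\widetilde G}_\sharp$ is a natural isomorphism.

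The first step is to verify the presheaf-level counit $\varphi^\ast_{\widetilde G}\varphi^{\widetilde G}_\ast\Rightarrow\mathrm{id}$ is a natural isomorphism. Full faithfulness of $\varphi$ supplies the classical isomorphism $\varphi^\ast\varphi_\ast\cong\mathrm{id}$ for unsliced presheaves. Because $\varphi^\ast$ is simultaneously a right adjoint to $\varphi_!$ and a left adjoint to $\varphi_\ast$, it preserves all finite limits; applying it to the pullback square defining $\varphi^{\widetilde G}_\ast X$ therefore yields $\varphi^\ast_{\widetilde G}\varphi^{\widetilde G}_\ast X\cong\varphi^\ast\widetilde G\times_{\varphi^\ast\widetilde G}X\cong X$, naturally in $X$.

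The second step is to confirm that the adjunction $\varphi^\natural_{\widetilde G}\dashv\varphi^{\widetilde G}_\sharp$ genuinely lifts the presheaf-level adjunction, in the sense that both adjoints leave the underlying $\mathcal A$-sets unchanged. On the left-adjoint side, the construction in \cref{cor:Ran-crsmon} defines $\varphi^\natural_{\widetilde G}\widetilde M$ as $\varphi^\ast_{\widetilde G}\widetilde M$ equipped with the monoid structure transported along the strong monoidal functor of \cref{prop:pb-monoidal}. On the right-adjoint side, the discussion following \cref{cor:Ran-crsmon} exhibits $\varphi^{\widetilde G}_\sharp M$ as $\varphi^{\widetilde G}_\ast M$ equipped with the monoid structure produced from the lax monoidal structure on $\varphi^{\widetilde G}_\ast$. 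Consequently the underlying $\mathcal A$-set map of the counit $\varphi^\natural_{\widetilde G}\varphi^{\widetilde G}_\sharp M\to M$ coincides with the presheaf counit, which is invertible by the first step; by conservativity the counit is itself invertible in $\mathbf{CrsMon}^{/\varphi^\ast\widetilde G}_{\mathcal A}$, and $\varphi^{\widetilde G}_\sharp$ is fully faithful.

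The main delicacy, though mostly bookkeeping, lies in the second step: one must be sure that the Porst-style construction supplying $\varphi^{\widetilde G}_\sharp$ really computes it as $\varphi^{\widetilde G}_\ast M$ endowed with a lifted monoid structure, rather than, for instance, applying a free-monoid completion afterwards. Once this compatibility between the underlying and the structured right adjoints is in place, the whole proof collapses to the familiar presheaf-theoretic statement that right Kan extensions along fully faithful functors are fully faithful.
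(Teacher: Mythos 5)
Your proposal is correct, and its overall skeleton coincides with the paper's: both arguments reduce full faithfulness of $\varphi^{\widetilde G}_\sharp$ to invertibility of the counit, establish that the presheaf-level counit $\varphi^\ast_{\widetilde G}\varphi^{\widetilde G}_\ast X\to X$ is an isomorphism, and then use the fact (recorded after \cref{cor:Ran-crsmon}) that $\varphi^{\widetilde G}_\sharp M$ is literally $\varphi^{\widetilde G}_\ast M$ with the lax-monoidally induced structure, so the crossed-monoid counit has the presheaf counit as its underlying map. Where you diverge is in how the presheaf-level step is proved: the paper identifies $\mathbf{Set}^{/\varphi^\ast\widetilde G}_{\mathcal A}\simeq\mathbf{Set}_{\mathcal A/\varphi^\ast\widetilde G}$ and $\mathbf{Set}^{/\widetilde G}_{\widetilde{\mathcal A}}\simeq\mathbf{Set}_{\widetilde{\mathcal A}/\widetilde G}$, notes that $\varphi_{/\widetilde G}$ between the categories of elements is again fully faithful, and invokes the standard result (Proposition 4.23 in \cite{Kel05}) that Kan extension along a fully faithful functor is fully faithful; you instead argue directly, applying the limit-preserving functor $\varphi^\ast$ to the pullback square defining $\varphi^{\widetilde G}_\ast X$ and using $\varphi^\ast\varphi_\ast\cong\mathrm{id}$, which is more elementary and avoids the slice-category identification, at the cost of redoing by hand what the cited result packages. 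You also make explicit, via monadicity and hence conservativity of the forgetful functor from \cref{theo:crsmon-monad}, why invertibility of the underlying map forces invertibility of the counit in $\mathbf{CrsMon}^{/\varphi^\ast\widetilde G}_{\mathcal A}$; the paper leaves this final inference implicit, so your version is slightly more complete on that point. The one step you flag as delicate — that the Porst-style right adjoint really is $\varphi^{\widetilde G}_\ast M$ with a lifted structure and not some further completion — is exactly what the paper's discussion following \cref{cor:Ran-crsmon} asserts, so your reliance on it is legitimate.
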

\begin{proof}
Since $\varphi$ is fully faithful, the induced functor
\[
\varphi_{/\widetilde G}:\mathcal A/(\varphi^\ast\widetilde G)\to\widetilde{\mathcal A}/\widetilde G
\]
is also fully faithful.
Note that we have canonical identifications
\[
\mathbf{Set}^{/\varphi^\ast\widetilde G}_{\mathcal A}
\simeq \mathbf{Set}_{\mathcal A/\varphi^\ast\widetilde G}
\quad\text{and}\quad
\mathbf{Set}^{/\widetilde G}_{\widetilde{\mathcal A}}
\simeq \mathbf{Set}_{\widetilde{\mathcal A}/\widetilde G}
\]
so that the adjunction $\varphi^\ast_{\widetilde G}\dashv\varphi^{\widetilde G}_\ast$ is identified with the one obtained by the right Kan extension of $\varphi_{/\widetilde G}$.
Since Kan extensions of fully faithful functors along the Yoneda embedding are again fully faithful, e.g see Proposition 4.23 in \cite{Kel05}, $\varphi^{\widetilde G}_\ast$ is fully faithful.
The last assertion is equivalent to that, for each $X\in\mathbf{Set}^{/\varphi^\ast\widetilde G}_{\mathcal A}$, the counit
\begin{equation}
\label{eq:prf:Ranff-crsmon:counit}
\varphi^\ast_{\widetilde G}\varphi^{\widetilde G}_\ast X\to X
\end{equation}
is an isomorphism.
Note that if $X$ is a crossed $\mathcal A$-monoid over $\varphi^\ast\widetilde G$, then \eqref{eq:prf:Ranff-crsmon:counit} underlies the counit map $\varphi^\natural_{\widetilde G}\varphi^{\widetilde G}_\sharp X\to X$.
This implies that the right adjoint $\varphi^{\widetilde G}_\sharp$ is fully faithful.
\end{proof}

\begin{example}
\label{ex:Ran-aug}
Take $\mathcal A\to\widetilde{\mathcal A}$ to be the inclusion $j:\Delta\hookrightarrow\widetilde\Delta$.
The right adjoint functor $j_\ast$ to the restriction $j^\ast:\mathbf{Set}_{\widetilde\Delta}\to\mathbf{Set}_{\Delta}$ sends each simplicial set $X_\bullet$ to the augmented one $j_\ast X$ given by
\begin{equation}
\label{eq:ex:Ran-aug:func}
j_\ast X(\langle n\rangle)=
\begin{cases}
\{\mathrm{pt}\} \quad & n=0 \\
X_{n-1} \quad & n\ge 1\ .
\end{cases}
\end{equation}
In particular, we have a canonical isomorphisms
\begin{equation}
\label{eq:ex:Ran-aug:termcrs}
j^\ast\mathfrak W_{\widetilde\Delta} \cong \mathfrak W_\Delta
\ ,\quad j_\ast\mathfrak W_\Delta\cong\mathfrak W_{\widetilde\Delta}\ .
\end{equation}
Now, since the functor $j$ is fully faithful, it is $\mathfrak W_{\widetilde\Delta}$-stable, so we obtain an adjunction
\[
j^\ast_{\mathfrak W_{\widetilde\Delta}}:
\vcenter{
  \xymatrix{
    \mathbf{Set}^{/\mathfrak W_{\widetilde\Delta}}_{\widetilde\Delta} \ar@{}[r]|-\perp \ar@/^.2pc/[]+R+(0,1);[r]+L+(0,1) & \mathbf{Set}^{/\mathfrak W_\Delta}_\Delta \ar@/^.2pc/[]+L+(0,-1);[l]+R+(0,-1) }}
:j_\ast^{\mathfrak W_{\widetilde\Delta}}
\]
with $j^\ast_{\mathfrak W_{\widetilde\Delta}}$ monoidal by \cref{prop:pb-monoidal}.
We finally obtain an adjunction
\[
j^\natural:
\vcenter{
  \xymatrix{
    \mathbf{CrsMon}_{\widetilde\Delta} \ar@{}[r]|-\perp \ar@/^.2pc/[]+R+(0,1);[r]+L+(0,1) & \mathbf{CrsMon}_\Delta \ar@/^.2pc/[]+L+(0,-1);[l]+R+(0,-1) }}
:j_\sharp\ .
\]
Note that, thanks to the equation \eqref{eq:ex:Ran-aug:termcrs}, the functor $j_\sharp$ is also given by \eqref{eq:ex:Ran-aug:func} and fully faithful by virtue of \cref{prop:Ranff-crsmon}.
\end{example}

\begin{example}
\label{ex:Ran-interval}
By its construction, the category $\nabla$ is the Kleisli category of the monad
\[
\mathfrak J:\widetilde\Delta\to\widetilde\Delta
\ ;\quad \langle n\rangle\mapsto \langle n+2\rangle\cong\{-\infty\}\star\langle n\rangle\star\{\infty\}\cong\double\langle n\double\rangle\ .
\]
In view of this, one can find the right adjoint to the canonical embedding $\mathfrak J:\widetilde\Delta\to\nabla$; namely
\[
U:\nabla\to\widetilde\Delta\ ;\quad \double\langle n\double\rangle\to\langle n+2\rangle\ .
\]
It turns out that the pullbacks along these functors gives rise to an adjunction $\mathfrak J^\ast\dashv U^\ast:\mathbf{Set}_{\widetilde\Delta}\to\mathbf{Set}_\nabla$, and the uniqueness of right adjoints implies $U^\ast\cong \mathfrak J_\ast$.
More explicitly, the right adjoint $\mathfrak J_\ast:\mathbf{Set}_{\widetilde\Delta}\to\mathbf{Set}_\nabla$ is given by
\[
\mathfrak J_\ast X(\double\langle n\double\rangle) = X(\langle n+2\rangle)\ .
\]
The counit $X\to\mathfrak J_\ast\mathfrak J^\ast X$ is the monomorphism described as follows: consider the map $\tau_n:\double\langle n+2\double\rangle\to\double\langle n\double\rangle$ defined by
\[
\tau_n(i) =
\begin{cases}
\hfil -\infty\hfil & i=-\infty,1 \\
\hfil i-1\hfil & 2\le i\le n+1 \\
\hfil \infty\hfil & i=n+2,\infty\ .
\end{cases}
\]
Then, each component $X\to\mathfrak J_\ast\mathfrak J^\ast X$ is the map induced by $\tau_n$:
\[
\tau_n^\ast:X(\double\langle n\double\rangle)
\to\mathfrak J_\ast\mathfrak J^\ast X(\double\langle n\double\rangle)
=X(\double\langle n+2\double\rangle)\ .
\]
In particular, as for the Weyl crossed interval group $\mathfrak W_\nabla$, the unit map $\mathfrak W_\nabla\to \mathfrak J_\ast \mathfrak J^\ast\mathfrak W_\nabla$ exhibits $\mathfrak W_\nabla(\double\langle n\double\rangle)$ as a subset of $\mathfrak W_\nabla(\double\langle n+2\double\rangle)$ given by
\[
\left\{(\sigma;\varepsilon_1,\dots,\varepsilon_{n+2};\theta)\in\mathfrak W_\nabla(\double\langle n+2\double\rangle)\ \middle|\
\substack{\displaystyle\sigma(\{1,n+2\})=\{1,n+2\}\ ,
  \cr
  \displaystyle\left.\sigma\right|_{\{1,n+2\}}=\varepsilon_1=\varepsilon_{n+2}=\theta}\right\}\ .
\]
On the other hand, we have
\[
\mathfrak J^\ast\mathfrak W_\nabla
\cong\mathfrak H\times C_2
\cong\mathfrak W_{\widetilde\Delta}\times C_2\ .
\]
Since $\mathfrak W_{\widetilde\Delta}$ is the terminal object in $\mathbf{CrsMon}_{\widetilde\Delta}$, giving a map $M\to\mathfrak J^\ast\mathfrak W_\nabla$ of augmented crossed simplicial monoid is equivalent to giving an augmented simplicial map $M\to C_2$ which is a degreewise monoid homomorphism.
Hence, for $M\in\mathbf{CrsMon}^{/\mathfrak W_{\widetilde\Delta}\times C_2}_{\widetilde\Delta}$ with associated map $\theta:M\to C_2$, we have
\[
\mathfrak J^{\mathfrak W_\nabla}_\sharp M(\double\langle n\double\rangle)
= \left\{x\in M(\langle n+2\rangle)\ \middle|\
\substack{\displaystyle x(\{1,n+2\})=\{1,n+2\},\cr
  \displaystyle\left.x\right|_{\{1,n+2\}}=\varepsilon_1(x)=\varepsilon_{n+2}(x)=\theta(x)}\right\}\ ,
\]
where we write $(x;\varepsilon_1(x),\dots,\varepsilon_{n+2}(x))$ the image of $x$ in $\mathfrak W_{\widetilde\Delta}$.
It exactly gives the right adjoint in the adjunction
\begin{equation}
\label{eq:Ran-interval:adjcrs}
\mathfrak J^\natural_{\mathfrak W_\nabla}:
\vcenter{
  \xymatrix{
    \mathbf{CrsMon}_\nabla \ar@{}[r]|-\perp \ar@/^.2pc/[]+R+(0,1);[r]+L+(0,1) & \mathbf{CrsMon}_{\widetilde\Delta}^{/\mathcal W_{\widetilde\Delta}\times C_2} \ar@/^.2pc/[]+L+(0,-1);[l]+R+(0,-1) }}
:\mathfrak J_\sharp^{\mathfrak W_\nabla}\ .
\end{equation}
Note that this adjunction extends to the right: namely, the projection $\mathfrak W_{\widetilde\Delta}\times C_2\to\mathfrak W_{\widetilde\Delta}$, which is the unique map of augmented crossed simplicial groups between them, gives rise to an adjunction
\begin{equation}
\label{eq:Ran-interval:C2}
\begin{array}{rcl}
  \mathbf{CrsMon}_{\widetilde\Delta}^{/\mathfrak W_{\widetilde\Delta}\times C_2} & \vcenter{\xymatrix{ *{} \ar@{}[r]|-\perp \ar@/^.2pc/[]+R+(0,1);[r]+L+(0,1) & *{} \ar@/^.2pc/[]+L+(0,-1);[l]+R+(0,-1) }} & \mathbf{CrsMon}_{\widetilde\Delta} \\[2ex]
  \left(M\to\mathfrak W_{\widetilde\Delta}\times C_2\right) & \xymatrix{ *{} \ar@{|->}[r] & *{} } & M \\[2ex]
  \left(M\times C_2\to\mathfrak W_{\widetilde\Delta}\times C_2\right) & \xymatrix{ *{} & *{} \ar@{|->}[l] } & M
\end{array}
\end{equation}
Combining \eqref{eq:Ran-interval:adjcrs} with \eqref{eq:Ran-interval:C2}, we obtain an adjunction
\[
\mathfrak J^\natural:
\vcenter{
  \xymatrix{
    \mathbf{CrsMon}_\nabla \ar@{}[r]|-\perp \ar@/^.2pc/[]+R+(0,1);[r]+L+(0,1) & \mathbf{CrsMon}_{\widetilde\Delta} \ar@/^.2pc/[]+L+(0,-1);[l]+R+(0,-1) }}
:\mathfrak J_\sharp\ .
\]
\end{example}

We next discuss the other adjunction $\varphi^{\widetilde G}_!\dashv\varphi^\ast_{\widetilde G}$ induced by the left Kan extension of a $\widetilde G$-stable faithful functor $\varphi:\mathcal A\to\widetilde{\mathcal A}$.
In contrast to the right Kan extension, the functor $\varphi^{\widetilde G}_!$ itself does not induce any functor on the category of monoids in general.
We hence need to begin with the construction of the left adjoint to the functor
\[
\varphi^\natural_{\widetilde G}:\mathbf{CrsMon}^{/\widetilde G}_{\widetilde{\mathcal A}}
\to\mathbf{CrsMon}^{/\varphi^\ast\widetilde G}_{\mathcal A}
\]
directly.
Fortunately, we can make use of the following theorem.

\begin{theorem}[Adjoint Lifting Theorem, Theorem 4.5.6 in \cite{Bor94}]
\label{theo:adj-lift}
Suppose we have a square of functors
\[
\xymatrix{
  \mathcal M \ar[r]^Q \ar[d]_U \ar@{}[dr]|-\cong & \mathcal N \ar[d]^V \\
  \mathcal C \ar[r]^R & \mathcal D }
\]
which is commutative up to a natural isomorphism, and suppose $\mathcal M$ has all coequalizers.
Then, $Q$ has a left adjoint as soon as so does $R$.
More precisely, if $F\dashv U$, $G\dashv V$, and $L\dashv R$, so $FL\dashv RU\cong VQ$, then the left adjoint $K:\mathcal N\to\mathcal M$ to $Q$ is defined by the coequalizer sequence
\begin{equation}
\label{eq:adj-lift:coeq}
FLVGV(N)
\xrightrightarrows{FLV\varepsilon,\ \omega V} FLV(N)
\to K(N)\ ,
\end{equation}
in $\mathcal M$, where $\varepsilon:GV\to\mathrm{Id}$ is the counit of the adjunction, and $\omega$ is the composition
\[
\begin{gathered}
\alpha:G
\xrightarrow{G\eta^{FL\dashv VQ}} GVQFL
\xrightarrow{\varepsilon QFL} QFL
\\
\omega:FLVG
\xrightarrow{FLV\alpha} FLVQFL
\xrightarrow{\varepsilon^{FL\dashv VQ}FL} FL\ .
\end{gathered}
\]
\end{theorem}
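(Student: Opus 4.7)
The plan is to define $K(N)$ by the coequalizer (\ref{eq:adj-lift:coeq}) and verify $K \dashv Q$ via a hom-set computation. Composing the given adjunctions and using $RU \cong VQ$ yields $FL \dashv VQ$; together with $G \dashv V$ one obtains a natural bijection
\[
\mathcal M(FL(D), M) \;\cong\; \mathcal D(D, VQ(M)) \;\cong\; \mathcal N(G(D), Q(M))
\]
for every $D \in \mathcal D$ and $M \in \mathcal M$. This already constructs the left adjoint on objects of the form $G(D)$, so the task is to extend it consistently to arbitrary $N$, which one does by resolving $N$ via the counit of $G \dashv V$.

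For each $N \in \mathcal N$, the counit yields the reflexive pair
\[
GVGV(N) \xrightrightarrows{GV\varepsilon,\ \varepsilon GV} GV(N) \xrightarrow{\varepsilon_N} N,
\]
which (under the monadicity of $V$ implicit in the applications of the theorem) is a coequalizer in $\mathcal N$. The hom-bijection above, read as an object-level assignment $G(D) \mapsto FL(D)$, sends this parallel pair to a pair in $\mathcal M$; a direct chase using the definition of $\alpha$ as the mate of $\mathrm{id}_{FL}$ under the adjunctions $G \dashv V$ and $FL \dashv VQ$ identifies that image with the pair $FLV\varepsilon$, $\omega V$ appearing in (\ref{eq:adj-lift:coeq}). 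Since $\mathcal M$ has coequalizers by hypothesis, $K(N)$ is well-defined, and functoriality in $N$ follows from the universal property of coequalizers applied to the evident naturality of the parallel pair.

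With $K$ so defined, the adjunction is verified by
\begin{align*}
\mathcal M(K(N), M)
&\cong \operatorname{eq}\bigl(\mathcal M(FLV(N), M) \rightrightarrows \mathcal M(FLVGV(N), M)\bigr) \\
&\cong \operatorname{eq}\bigl(\mathcal N(GV(N), Q(M)) \rightrightarrows \mathcal N(GVGV(N), Q(M))\bigr) \\
&\cong \mathcal N(N, Q(M)),
\end{align*}
the first step by the universal property of the defining coequalizer, the second by the hom-bijection of paragraph one applied pointwise, and the third by the coequalizer presentation of $N$. I expect the main obstacle to be the diagrammatic identification of $\omega V$ with $\varepsilon GV$ under the hom-bijection: this is a bookkeeping exercise involving the triangle identities for all three adjunctions together with the definition of $\omega$ as the composite of $FLV\alpha$ with the counit of $FL \dashv VQ$, conceptually routine but delicate to align.
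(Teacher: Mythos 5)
The paper itself gives no proof of this theorem: it is quoted from Theorem 4.5.6 of \cite{Bor94} (with a remark pointing to the Adjoint Triangle Theorem as an alternative route), so your argument is to be judged against the standard proof, which it reproduces faithfully in outline: define the would-be left adjoint on free objects by $GD\mapsto FLD$ via $\mathcal M(FLD,M)\cong\mathcal D(D,VQM)\cong\mathcal N(GD,QM)$, present an arbitrary $N$ by the canonical pair $GVGVN\rightrightarrows GVN\to N$, transport that pair across the bijection, and verify $K\dashv Q$ by your three-step hom computation. The ``bookkeeping'' step you defer does work out: writing $\Phi_D\colon\mathcal M(FLD,M)\cong\mathcal N(GD,QM)$ for the composite bijection, one has $\Phi_D(\mathrm{id}_{FLD})=\alpha_D$, and since the $FL\dashv VQ$-adjunct of $\omega_D=\varepsilon^{FL\dashv VQ}_{FLD}\circ FLV\alpha_D$ is $V\alpha_D$, naturality of $\varepsilon^{G\dashv V}$ gives $\Phi_{VGD}(\omega_D)=\varepsilon_{QFLD}\circ GV\alpha_D=\alpha_D\circ\varepsilon_{GD}$; together with naturality in $D$ (which handles $GV\varepsilon\mapsto FLV\varepsilon$) this is exactly the compatibility needed for your middle isomorphism of equalizers, and naturality in $M$ of the resulting bijection $\mathcal M(K(N),M)\cong\mathcal N(N,QM)$ then yields the adjunction without having to predefine $K$ on morphisms.

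The one genuine issue is the point you flag only parenthetically: your last isomorphism needs $GVGVN\rightrightarrows GVN\to N$ to be a coequalizer in $\mathcal N$, and this does \emph{not} follow from the hypotheses as stated in the theorem — it requires $V$ to be monadic (or at least of descent type). Indeed, as transcribed here the statement is false: take $\mathcal C=\mathcal D$ the terminal category, so that $U$ and $V$ have left adjoints as soon as $\mathcal M$ and $\mathcal N$ have initial objects, and the conclusion would make every functor $Q$ between such categories a right adjoint. Borceux's Theorem 4.5.6 has monadicity built in (the vertical functors are the forgetful functors of Eilenberg--Moore categories), and in the paper's only application, the square \eqref{eq:crsmon-adjlift}, both vertical functors are monadic by \cref{theo:crsmon-monad}, so nothing is lost there; but a self-contained proof must state the monadicity (or descent-type) hypothesis on $V$ explicitly rather than appeal to it as ``implicit in the applications.'' With that hypothesis added, your argument is complete and is essentially Borceux's proof.
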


\begin{remark}
One can deduce \Cref{theo:adj-lift} from, besides the direct proof, a more general theorem called Adjoint Triangle Theorem.
We refer the reader to \cite{Dubuc1968} and \cite{StreetVerity2010}.
\end{remark}

We can apply \cref{theo:adj-lift} in our situation anyway, i.e. to the square
\begin{equation}
\label{eq:crsmon-adjlift}
\vcenter{
  \xymatrix{
    \mathbf{CrsMon}^{/\widetilde G}_{\widetilde{\mathcal A}} \ar[r]^{\varphi^\natural_{\widetilde G}} \ar[d]_{U^{\widetilde G}} & \mathbf{CrsMon}^{/\varphi^\ast\widetilde G}_{\mathcal A} \ar[d]^{U^{\varphi^\ast\widetilde G}} \\
    \mathbf{Set}^{/\widetilde G}_{\widetilde{\mathcal A}} \ar[r]^{\varphi^\ast_{\widetilde G}} & \mathbf{Set}^{/\varphi^\ast\widetilde G}_{\mathcal A} }}
\quad.
\end{equation}
To obtain a more explicit description, however, we need to know more about each involved functor.
In particular, since all the right adjoints just forget structures, it is enough to care about the left adjoints.
We first look at the free functor
\[
F^G:\mathbf{Set}^{/G}_{\mathcal A}\to\mathbf{CrsMon}^{/G}_{\mathcal A}
\]
for a crossed $\mathcal A$-group $G$.
This functor is the one appeared in \cref{theo:crsmon-monad} and computed as follows: for each $X\in\mathbf{Set}^{/G}_{\mathcal A}$ with the structure map $p:X\to G$, $F^GX$ is, as an $\mathcal A$-set, degreewisely the free monoid generated by $X$ with the structure map
\[
\begin{array}{rccl}
  \varphi^\ast:& F^GX(b) & \to & F^GX(a) \\[1ex]
  & x_1 x_2\dots x_n & \mapsto & (\varphi^{p(x_2)\dots p(x_n)})^\ast(x_1)(\varphi^{p(x_3)\dots p(x_n)})^\ast(x_2)\dots\varphi^\ast(x_n)\ .
\end{array}
\]
for each $\varphi:a\to b\in\mathcal A$.
The map $F^GX\to G$ is the induced one.

On the other hand, for a functor $\varphi:\mathcal A\to\widetilde{\mathcal A}$, its left Kan extension $\varphi_!:\mathbf{Set}_{\mathcal A}\to\mathbf{Set}_{\widetilde{\mathcal A}}$ along the Yoneda embedding is realized as follows: for $X\in\mathbf{Set}_{\mathcal A}$ and for $\tilde a\in\widetilde{\mathcal A}$, $\varphi_!X(\tilde a)$ is the quotient set
\[
\left\{(x,\widetilde\varphi)\ \middle|\ x\in X(b),\ \widetilde\varphi\in\widetilde{\mathcal A}(\tilde a,\varphi(b))\ \text{for}\ b\in\mathcal A\right\}\big/\sim
\]
by the equivalence relation $\sim$ generated by
\[
(\theta^\ast(x),\widetilde\varphi)\sim (x,\varphi(\theta)\widetilde\varphi)
\]
for each triples $(x,\widetilde\varphi,\theta)$ such that both sides make sense.
We write $[x,\widetilde\varphi]\in\varphi_!X(\tilde a)$ the equivalence class represented by the pair $(x,\widetilde\varphi)$.
If $X$ is equipped with an $\mathcal A$-map $f:X\to\varphi^\ast\widetilde S$, then we have an $\widetilde{\mathcal A}$-map
\[
\varphi_!X\to\widetilde S
\ ;\quad [x,\widetilde\varphi] \mapsto \widetilde\varphi^\ast(f(x))\ ,
\]
which exactly gives the functor $\varphi^{\widetilde S}_!:\mathbf{Set}^{/\varphi^\ast\widetilde S}_{\mathcal A}\to\mathbf{Set}^{/\widetilde S}_{\widetilde{\mathcal A}}$.

Combining the observations above and \cref{theo:adj-lift}, we obtain the following result.

\begin{theorem}
\label{theo:Lan-crsmon}
Let $\widetilde G$ be a crossed $\widetilde{\mathcal A}$-group, and let $\varphi:\mathcal A\to\widetilde{\mathcal A}$ be a $\widetilde G$-stable faithful functor.
Then, the pullback $\varphi^\natural_{\widetilde G}$ admits a left adjoint functor $\varphi^{\widetilde G}_\flat$ so to form an adjunction
\[
\varphi^{\widetilde G}_\flat:
\vcenter{
  \xymatrix{
    \mathbf{CrsMon}^{/\varphi^\ast\widetilde G}_{\mathcal A} \ar@{}[r]|-\perp \ar@/^.2pc/[]+R+(0,1);[r]+L+(0,1) & \mathbf{CrsMon}^{/\widetilde G}_{\widetilde{\mathcal A}} \ar@/^.2pc/[]+L+(0,-1);[l]+R+(0,-1) }}
:\varphi^\natural_{\widetilde G}\ .
\]
More precisely, for each $M\in\mathbf{CrsMon}^{/\varphi^\ast\widetilde G}_{\mathcal A}$ with the structure map $p:M\to\varphi^\ast\widetilde G$, the crossed $\widetilde{\mathcal A}$-monoid $\varphi^{\widetilde G}_\flat M$ over $\widetilde G$ is given as follows: for each $\tilde a\in\widetilde{\mathcal A}$, the monoid $\varphi^{\widetilde G}_\flat M(\tilde a)$ is obtained as the quotient of the free monoid with generating set
\begin{equation}
\label{eq:Lan-quotient}
\left\{(x,\widetilde\varphi)\ \middle|\ x\in M(b),\ \widetilde\varphi\in\widetilde{\mathcal A}(\tilde a,\varphi(b))\ \text{for}\ b\in\mathcal A\right\}
\end{equation}
by the congruence relation $\sim$ generated by
\[
(e_b,\widetilde\varphi)
\sim e_a
\ ,\quad
(xy,\widetilde\varphi)
\sim(x,\widetilde\varphi^y)(y,\widetilde\varphi)
\ ,\quad
(\theta^\ast(z),\widetilde\varphi)
\sim(z,\varphi(\theta)\widetilde\varphi)
\]
for $x,y,z,\widetilde\varphi,\theta$ such that each sides makes sense.
For each $\widetilde\psi:\widetilde b\to\tilde a\in\widetilde{\mathcal A}$, the map $\widetilde\psi^\ast:\varphi^{\widetilde G}_\flat M(\tilde a)\to\varphi^{\widetilde G}_\flat M(\widetilde b)$ is given by
\[
\widetilde\psi([x_1,\widetilde\varphi_1]\dots[x_n,\widetilde\varphi_n])
= [x_1,\widetilde\psi^{\widetilde\varphi_2^\ast(p(x_2))\dots\widetilde\varphi_n^\ast(p(x_n))}\widetilde\varphi_1]\dots[x_n,\widetilde\psi\widetilde\varphi_n]\ .
\]
Finally, $\varphi^{\widetilde G}_\flat M\to\widetilde G$ is the one generated by $[x,\widetilde\varphi]\mapsto\widetilde\varphi^\ast(p(x))$.
\end{theorem}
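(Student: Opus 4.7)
The plan is to apply the Adjoint Lifting Theorem \cref{theo:adj-lift} to the square \eqref{eq:crsmon-adjlift}. To verify its hypotheses, I first note that $\mathbf{CrsMon}^{/\widetilde G}_{\widetilde{\mathcal A}}$ has all coequalizers by \cref{cor:crsmon-bicomp}, the vertical forgetful functors $U^{\widetilde G}$ and $U^{\varphi^\ast\widetilde G}$ are monadic with left adjoints $F^{\widetilde G}$ and $F^{\varphi^\ast\widetilde G}$ by \cref{theo:crsmon-monad}, and the bottom functor $\varphi^\ast_{\widetilde G}$ has the left adjoint $\varphi^{\widetilde G}_!$ recalled in the discussion preceding the theorem. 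The commutativity of \eqref{eq:crsmon-adjlift} follows immediately from the definitions: both ways around send a crossed $\widetilde{\mathcal A}$-monoid $\widetilde M\to\widetilde G$ to its underlying $\mathcal A$-set pulled back along $\varphi$. Hence \cref{theo:adj-lift} yields the left adjoint $\varphi^{\widetilde G}_\flat$ and the adjunction stated in the first part of the theorem.

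Next I would extract the explicit description of $\varphi^{\widetilde G}_\flat M$ by computing the coequalizer \eqref{eq:adj-lift:coeq} concretely. Unwinding the composition $F^{\widetilde G}\varphi^{\widetilde G}_!U^{\varphi^\ast\widetilde G}$ applied to $M$, one obtains the free crossed $\widetilde{\mathcal A}$-monoid over $\widetilde G$ generated by the $\widetilde{\mathcal A}$-set $\varphi^{\widetilde G}_!M$; degreewisely this is the free monoid on formal symbols $[x,\widetilde\varphi]$ modulo the left-Kan-extension relation $(\theta^\ast(z),\widetilde\varphi)\sim(z,\varphi(\theta)\widetilde\varphi)$. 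The parallel pair in \eqref{eq:adj-lift:coeq} is formed by the monoid structure $FLV\varepsilon$ inherited from $M$ and the comparison map $\omega V$ produced by the monad; coequalizing these forces precisely the two remaining relations $(e_b,\widetilde\varphi)\sim e_a$ and $(xy,\widetilde\varphi)\sim(x,\widetilde\varphi^y)(y,\widetilde\varphi)$. The formula for $\widetilde\psi^\ast$ then follows by applying the twisted composition formula recalled for $F^{\widetilde G}$ to a representative word, and the structure map $\varphi^{\widetilde G}_\flat M\to\widetilde G$ is determined as the adjunct of $p:M\to\varphi^\ast\widetilde G$ under $\varphi^{\widetilde G}_!\dashv\varphi^\ast_{\widetilde G}$, giving $[x,\widetilde\varphi]\mapsto\widetilde\varphi^\ast(p(x))$.

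The main obstacle will be the bookkeeping required to check that the claimed operations respect the congruence $\sim$. In particular, one must verify that the twisted formula for $\widetilde\psi^\ast$ is compatible with the relations $(xy,\widetilde\varphi)\sim(x,\widetilde\varphi^y)(y,\widetilde\varphi)$ and $(\theta^\ast(z),\widetilde\varphi)\sim(z,\varphi(\theta)\widetilde\varphi)$, and that the resulting structure satisfies \ref{condCG:gdist} together with \ref{condCG:cdist} relative to $\widetilde G$. These verifications reduce to manipulations of the map $\operatorname{crs}$ and its naturality as in \eqref{eq:semicrs-commsq}, and are essentially the same type of tedious but routine calculations already performed in the proof of \cref{prop:crsgrp-colim} for pushouts. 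Once these checks are in place, the universal property follows automatically from the Adjoint Lifting Theorem, and the formula given in the statement is characterised up to canonical isomorphism.
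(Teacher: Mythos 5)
Your proposal takes essentially the same route as the paper's proof: apply the Adjoint Lifting Theorem to the square \eqref{eq:crsmon-adjlift}, then compute the coequalizer \eqref{eq:adj-lift:coeq} explicitly, where the parallel pair (multiplying the inner word in $M$ versus distributing the brackets with the twisted action) enforces exactly the stated congruence, and the twisted $\widetilde{\mathcal A}$-set structure and structure map over $\widetilde G$ descend to the quotient. The compatibility checks you flag at the end are the same routine verifications the paper leaves implicit, so nothing essential is missing.
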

\begin{proof}
We apply \cref{theo:adj-lift} to the diagram \eqref{eq:crsmon-adjlift}.
To simplify the notation, we write $G:=\varphi^\ast\widetilde G$ and omit all the forgetful functors from formulas.
Then, the first thing we need is to know the two morphisms
\begin{equation}
\label{eq:prf:Lan-crsmon:cofork}
F^{\widetilde G}\varphi^{\widetilde G}_!\varepsilon,\ \omega:
F^{\widetilde G}\varphi^{\widetilde G}_!F^GM
\rightrightarrows F^{\widetilde G}\varphi^{\widetilde G}_!M
\end{equation}
of \eqref{eq:adj-lift:coeq} for each $M\in\mathbf{CrsGrp}^{/G}_{\mathcal A}$.
According to the discussion above, for each $\tilde a\in\widetilde{\mathcal A}$, the elements of $F^{\widetilde G}\varphi^{\widetilde G}_!F^GM(\tilde a)$ are finite words in the quotient set
\[
\left\{(x_1,\dots,x_n;\widetilde\varphi)
\ \middle|\ n\in\mathbb N,\ x_i\in M(b),\ \widetilde\varphi\in\widetilde{\mathcal A}(\tilde a,\varphi(b))\ \text{for}\ b\in\mathcal A\right\}
\big/ \sim
\]
by the equivalence relation $\sim$ generated by
\[
(x_1,\dots,x_n;\varphi(\theta)\widetilde\varphi)
\sim ((\theta^{x_2\dots x_n})^\ast(x_1),\dots,\theta^\ast(x_n);\widetilde\varphi)\ .
\]
We write $[x_1,\dots,x_n;\widetilde\varphi]$ the equivalence class represented by the tuple $(x_1,\dots,x_n;\widetilde\varphi)$.
On the other hand, $F^{\widetilde G}\varphi^{\widetilde G}_!M(\tilde a)$ is the set of words in the set $\varphi^{\widetilde G}_!M(\tilde a)$, which is obtained as the quotient of the set \eqref{eq:Lan-quotient} as mentioned just before \cref{theo:Lan-crsmon}.
Then, the direct computation shows the two maps in \eqref{eq:prf:Lan-crsmon:cofork} are the monoid homomorphisms generated by the maps
\[
\begin{multlined}
[x_1,\dots,x_n;\widetilde\varphi] \\
\mapsto [x_1\dots x_n,\widetilde\varphi],
\ [x_1,\widetilde\varphi^{x_2\dots x_n}]\dots[x_n,\widetilde\varphi]\ ,
\end{multlined}
\]
where one multiplies $x_1,\dots,x_n$ in $M$ while the other distributes the brackets.
Therefore one obtains the required presentation.
\end{proof}

\begin{corollary}
\label{cor:Lanff-crsmon}
Let $\varphi:\mathcal A\to\widetilde{\mathcal A}$ be a fully faithful functor, and let $\widetilde G$ be a crossed $\widetilde{\mathcal A}$-group.
Then, the left adjoint functor $\varphi^{\widetilde G}_\flat:\mathbf{CrsMon}^{/\varphi^\ast\widetilde G}_{\mathcal A}\to\mathbf{CrsMon}^{/\widetilde G}_{\widetilde{\mathcal A}}$ to the pullback $\varphi^\natural_{\widetilde G}$ is fully faithful.
\end{corollary}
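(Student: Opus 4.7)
The plan is to reduce the claim to showing that the unit of the adjunction $\varphi^{\widetilde G}_\flat \dashv \varphi^\natural_{\widetilde G}$ is a natural isomorphism, since a left adjoint is fully faithful precisely when its unit is invertible. Using the explicit presentation in \cref{theo:Lan-crsmon}, the component at $a\in\mathcal A$ of the unit $\eta_M\colon M\to \varphi^\natural_{\widetilde G}\varphi^{\widetilde G}_\flat M$ should be the monoid homomorphism
\[
\eta_M(a)\colon M(a)\to \varphi^{\widetilde G}_\flat M(\varphi(a))\,,\quad y\mapsto [y,\mathrm{id}_{\varphi(a)}]\,,
\]
and a short direct check using the defining relations confirms this is a map of crossed $\mathcal A$-monoids over $\varphi^\ast\widetilde G$ (using $\mathrm{id}^y=\mathrm{id}$, an instance of \cref{lem:crsgrp-ptd}\ref{req:crsgrp-ptd:id} applied to $\widetilde G$).

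For surjectivity of $\eta_M(a)$ I would use that $\varphi$ fully faithful forces $\widetilde{\mathcal A}(\varphi(a),\varphi(b))=\varphi(\mathcal A(a,b))$, so every generator $(x,\widetilde\varphi)$ has $\widetilde\varphi=\varphi(\psi)$ for a unique $\psi$. The third relation then gives $[x,\varphi(\psi)]=[\psi^\ast(x),\mathrm{id}]$, and the second relation together with $\mathrm{id}^y=\mathrm{id}$ collapses products to $[y_1,\mathrm{id}][y_2,\mathrm{id}]=[y_1y_2,\mathrm{id}]$. Thus every element of $\varphi^{\widetilde G}_\flat M(\varphi(a))$ has the form $[y,\mathrm{id}]$ with $y\in M(a)$.

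For injectivity, which is the main step, I would construct an explicit retraction $\pi_a\colon \varphi^{\widetilde G}_\flat M(\varphi(a))\to M(a)$ by defining $(x,\varphi(\psi))\mapsto \psi^\ast(x)$ on the generators of the free monoid and extending multiplicatively. Well-definedness against the three defining relations is then routine: the unit relation uses that $\psi^\ast$ preserves the monoid unit (part of the monoid-object structure on $M$); the precomposition relation uses functoriality; and the multiplicative relation reduces to the identity $\psi^\ast(xy)=(\psi^y)^\ast(x)\cdot\psi^\ast(y)$, which is precisely the statement that $\mu\colon M\rtimes_{\varphi^\ast\widetilde G}M\to M$ is a map of $\mathcal A$-sets (the crossed-monoid analog of \ref{condCG:cdist}). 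A direct computation shows $\pi_a\circ\eta_M(a)=\mathrm{id}_{M(a)}$, so combined with surjectivity one obtains bijectivity at each level, hence an isomorphism in $\mathbf{CrsMon}^{/\varphi^\ast\widetilde G}_{\mathcal A}$.

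The most delicate point is verifying the multiplicative relation for $\pi_a$, since it requires knowing that the action of $y\in M(b)$ on $\psi\in\mathcal A(a,b)$ (inherited from the crossed structure on $\varphi^\ast\widetilde G$) is intertwined with $\varphi$ in the sense that $\varphi(\psi^y)=\varphi(\psi)^y$ holds in $\widetilde{\mathcal A}$; this is exactly what \cref{lem:pb-crsgrp} built in, once one notes that every fully faithful $\varphi$ is automatically $\widetilde G$-stable. All other checks reduce to functoriality or standard monoid-axiom manipulations, so the proof should close without further obstacles.
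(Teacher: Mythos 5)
Your argument is correct, and its skeleton coincides with the paper's: reduce to showing the unit $M\to\varphi^\natural_{\widetilde G}\varphi^{\widetilde G}_\flat M$ is an isomorphism, identify its component at $a$ with $x\mapsto[x,\mathrm{id}]$, and get surjectivity from full faithfulness via $[y,\varphi(\psi)]=[\psi^\ast(y),\mathrm{id}]$ together with the multiplicative relation and $\mathrm{id}^y=\mathrm{id}$. Where you genuinely diverge is the injectivity step. The paper disposes of it in one line by appealing to the faithfulness of the left Kan extension $\varphi^{\widetilde G}_!:\mathbf{Set}^{/\varphi^\ast\widetilde G}_{\mathcal A}\to\mathbf{Set}^{/\widetilde G}_{\widetilde{\mathcal A}}$ to conclude $[x,\mathrm{id}]=[x',\mathrm{id}]$ forces $x=x'$; strictly speaking that fact concerns the presheaf-level quotient, whereas the classes $[x,\mathrm{id}]$ in $\varphi^{\widetilde G}_\flat M$ are taken modulo the larger congruence of \cref{theo:Lan-crsmon}, which also involves the unit and multiplicativity relations, so the paper's step is rather terse. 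Your explicit retraction $\pi_a$, defined on generators by $(x,\varphi(\psi))\mapsto\psi^\ast(x)$ and checked against all three generating relations (using that the unit and multiplication of $M$ are $\mathcal A$-maps, the $\varphi^\ast\widetilde G(b)$-equivariance of $\varphi$ from \cref{lem:pb-crsgrp}, and functoriality), handles this congruence directly and yields $\pi_a\circ\eta_M(a)=\mathrm{id}$, so it is a more self-contained and, on this point, more watertight treatment; the price is a longer verification, while the paper's route is shorter and leans on the standard full-faithfulness of Kan extensions along fully faithful functors (as in \cref{prop:Ranff-crsmon}).
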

\begin{proof}
Suppose $\varphi$ is fully faithful, so we may regard $\mathcal A$ as a full subcategory of $\widetilde{\mathcal A}$.
It suffices to show the unit $M\to\varphi^\natural_{\widetilde G}\varphi^{\widetilde G}_\flat M$ of the adjunction $\varphi^{\widetilde G}_\flat\dashv\varphi^\natural_{\widetilde G}$ is an isomorphism for each $M\in\mathbf{CrsMon}^{/\varphi^\ast\widetilde G}_{\mathcal A}$.
Note that, for $a\in\mathcal A$, it is given by
\begin{equation}
\label{eq:prf:Lanff-crsmon:unit}
M(a)\to\varphi^\natural_{\widetilde G}\varphi^{\widetilde G}_\flat M(a)=\varphi^{\widetilde G}_\flat M(a)
\ ;\quad x \mapsto [x,\mathrm{id}]\ .
\end{equation}
On the other hand, since $\mathcal A$ is a full subcategory of $\widetilde{\mathcal A}$, we have
\[
[y,\widetilde\varphi]=[\widetilde\varphi^\ast(y),\mathrm{id}]
\]
for each $y\in M(b)$ and $\widetilde\varphi\in\widetilde{\mathcal A}(a,b)=\mathcal A(a,b)$.
This implies that the map \eqref{eq:prf:Lanff-crsmon:unit} is surjective.
Moreover, the faithfulness of the left Kan extension $\varphi^{\widetilde G}_!:\mathbf{Set}^{/\varphi^\ast\widetilde G}_{\mathcal A}\to\mathbf{Set}^{\widetilde G}_{\widetilde{\mathcal A}}$ implies that $[x,\mathrm{id}]=[x',\mathrm{id}]$ if and only if $x=x'$.
Thus, \eqref{eq:prf:Lanff-crsmon:unit} is also injective, so we obtain the result.
\end{proof}

\begin{remark}
\Cref{cor:Lanff-crsmon} also follows from \cref{prop:Ranff-crsmon} and the fact that, for adjunctions $L\dashv F\dashv R$, $L$ is fully faithful if and only if so is $R$.
It seems to be a kind of \emph{falklore} while the reader will find proofs in \cite{DyckhoffTholen1987} and \cite{KellyLawvere1989}.
\end{remark}

\begin{example}
Take $\varphi$ to be the embedding $j:\Delta\to\widetilde\Delta$ and $\widetilde G=\mathfrak W_{\widetilde\Delta}$, then we obtain an adjunction
\begin{equation}
\label{eq:Lan-aug}
j_\flat:
\vcenter{
  \xymatrix{
    \mathbf{CrsMon}_\Delta \ar@{}[r]|-\perp \ar@/^.2pc/[]+R+(0,1);[r]+L+(0,1) & \mathbf{CrsMon}_{\widetilde\Delta} \ar@/^.2pc/[]+L+(0,-1);[l]+R+(0,-1) }}
:j^\natural
\end{equation}
by \cref{theo:Lan-crsmon}.
Since $j$ is fully faithful, by virtue of \cref{cor:Lanff-crsmon}, for every every crossed simplicial monoid $M_\bullet$, we have a canonical identification
\[
j_\flat M(\langle n\rangle) \cong M_{n-1}
\]
for each $n\ge 1$.
On the other hand, since the only object $\langle 0\rangle\in\widetilde\Delta$ outside the image of $j$ is initial, for the left Kan extension $j_!M$, we have
\[
j_!M(\langle 0\rangle)
\cong \colim_\Delta M_\bullet
\cong \operatorname{coeq}\left(M_1\xrightrightarrows{d_0,d_1}M_0\right)
\cong \pi_0(M)\ ,
\]
where $\pi_0(M)$ is the \emph{set of connected components} of the simplicial set $M$.
We claim $\pi_0(M)$ inherits a structure of monoids through the quotient map $M_0\to\pi_0(M)$.
Note that $\pi_0(M)$ is obtained as the quotient of $M_0$ by the equivalence relation $\sim$ generated by
\[
d_0 x\sim d_1 x
\]
for each $x\in M_1$.
Hence, to verify the claim, it suffices to find $z,z'\in M_1$ which support
\[
u\cdot d_0x\sim u\cdot d_1x
\ ,\quad
d_0x\cdot u\sim d_1 x\cdot u
\]
respectively for every $x\in M_1$ and $u\in M_0$.
Actually, we can take $z:=s_0u\cdot x$ and $z':=x\cdot s_0u$; indeed, we have
\[
\begin{gathered}
d_iz
= d_{x(i)}s_0u\cdot d_ix
= u\cdot d_ix\ ,
\\
d_iz'
= d_{s_0u(i)}x\cdot d_is_0u
= d_{s_0u(i)}x\cdot u\ .
\end{gathered}
\]
It turns out that the monoidal structure on $\pi_0M$ above makes not only the map $M_0\to\pi_0 M$ but also $M_n\to\pi_0M$ for arbitrary $n\in\mathbb N$ into a monoid homomorphism.
Finally, the left adjoint functor $j_\flat$ in \eqref{eq:Lan-aug} is given by
\[
j_\flat M(\langle n\rangle) \cong
\begin{cases}
\pi_0M \quad n=0 \\
M_{n-1}\quad n\ge 1\ ,
\end{cases}
\]
where $\pi_0M$ is equipped with the monoid structure given above.
\end{example}

\begin{example}
Take $\varphi$ to be the functor $\mathfrak J:\widetilde\Delta\to\nabla$.
Set $\mathcal I$ to be the set of morphisms $\varphi:\double\langle m\double\rangle\to\double\langle n\double\rangle\in\nabla$ such that it restricts to a bijection
\[
\varphi^{-1}\{1,\dots,n\}\to \{1,\dots,n\}\ .
\]
It is known that every morphism in $\nabla$ uniquely factors through a morphism in $\mathcal I$ followed by one in the image of $\mathfrak J$.
This factorization gives us a nice description of the left Kan extension functor
\[
\mathfrak J_!:\mathbf{Set}_{\widetilde\Delta}\to\mathbf{Set}_\nabla
\]
as follows: for an augmented simplicial set $X$, $\mathfrak J_!X(\double\langle n\double\rangle)$ is the set
\begin{equation}
\label{eq:Lan-interval:explicit}
\left\{(x,\rho)\ \middle|\ k\in\mathbb N,\ x\in X(k),\ \rho:\double\langle n\double\rangle\to\double\langle k\double\rangle\in\mathcal I\right\}
\end{equation}
For $\varphi:\double\langle m\double\rangle\to\double\langle n\double\rangle\in\nabla$, the induced map $\varphi^\ast:\mathfrak J_!X(\double\langle n\double\rangle)\to\mathfrak J_!X(\double\langle m\double\rangle)$ is given by
\[
\varphi^\ast(x,\rho) = (\mu^\ast(x),\rho_\varphi)\ ,
\]
where $(\mu,\rho_\varphi)$ is the unique pair of morphisms with $\mu\in\mathfrak J(\widetilde\Delta)$, $\rho_\varphi\in\mathcal I$, and $\mu\rho_\varphi=\rho\varphi$.
Thus, the left adjoint $\mathfrak J^{\mathfrak W_\nabla}_\flat$ in the adjunction
\begin{equation}
\label{eq:Lan-interval:adj}
\mathfrak J^{\mathfrak W_\nabla}_\flat:
\vcenter{
  \xymatrix{
    \mathbf{CrsMon}_{\widetilde\Delta}^{/\mathfrak W_{\widetilde\Delta}\times C_2} \ar@{}[r]|-\perp \ar@/^.2pc/[]+R+(0,1);[r]+L+(0,1) & \mathbf{CrsMon}_\nabla \ar@/^.2pc/[]+L+(0,-1);[l]+R+(0,-1) }}
:\mathfrak J^\natural_{\mathfrak W_\nabla}
\end{equation}
is described as follows: for $M\in\mathbf{CrsMon}_{\widetilde\Delta}^{/\mathfrak W_{\widetilde\Delta}\times C_2}$ with the associated augmented simplicial map $\theta:M\to C_2$, $\mathfrak J^{\mathfrak W_\nabla}_\flat M(\double\langle n\double\rangle)$ is the quotient of the free monoid over the set defined similarly to \eqref{eq:Lan-interval:explicit} by the congruence relation generated by
\begin{equation}
\label{eq:Lan-interval:rel}
(xy,\rho) \sim (x,\rho^{\theta(y)})(y,\rho)
\ ,\quad (e_k,\rho) \sim e_n\ .
\end{equation}
In particular, if $\theta:M\to C_2$ is trivial, the relation \eqref{eq:Lan-interval:rel} gives rise to an isomorphism
\bgroup
\[
\mathfrak J_\flat M(\double\langle n\double\rangle)
\cong \bigast_{\rho:\double\langle n\double\rangle\to\double\langle k\double\rangle\in\mathcal I} M(\langle k\rangle)\ ,
\]
\egroup
where the right hand side is the free product of monoids.
\end{example}

To end the section, we mention crossed groups.
We saw above that the Kan extensions along stable faithful functors give rise to adjunctions between the category of crossed monoids.
Notice that all the construction can be described as limits and colimits, at least degreewisely.
On the other hand, in view of \cref{theo:crsgrp-birefl}, the subcategory $\mathbf{CrsGrp}_{\mathcal A}\subset\mathbf{CrsMon}_{\mathcal A}$ is closed under both limits and colimits.
This implies all the discussion above restricts to crossed groups, so we obtain the following result.

\begin{theorem}
\label{theo:Kan-crsgrp}
Let $\varphi:\mathcal A\to\widetilde{\mathcal A}$ be a faithful functor, and let $\widetilde G$ be a crossed $\widetilde{\mathcal A}$-group such that $\varphi$ is $\widetilde G$-stable.
Then, the adjunctions
\[
\varphi^{\widetilde G}_\flat,\varphi^{\widetilde G}_\sharp:
\vcenter{
  \xymatrix{
    \mathbf{CrsMon}_{\mathcal A}^{/\varphi^\ast\widetilde G} \ar@/^.4pc/[]+R+(0,2);[r]+L+(0,2) \ar@/_.4pc/[]+R+(0,-2);[r]+L+(0,-2) & \mathbf{CrsMon}_{\widetilde{\mathcal A}}^{/\widetilde G} \ar[l]^-\perp_-\perp }}
:\varphi^\natural_{\widetilde G}
\]
given in \cref{cor:Ran-crsmon} and \cref{theo:Lan-crsmon} restrict to
\[
\varphi^{\widetilde G}_\flat,\varphi^{\widetilde G}_\sharp:
\vcenter{
  \xymatrix{
    \mathbf{CrsGrp}_{\mathcal A}^{/\varphi^\ast\widetilde G}  \ar@/^.4pc/[]+R+(0,2);[r]+L+(0,2) \ar@/_.4pc/[]+R+(0,-2);[r]+L+(0,-2) & \mathbf{CrsGrp}_{\widetilde{\mathcal A}}^{/\widetilde G} \ar[l]^-\perp_-\perp }}
:\varphi^\natural_{\widetilde G}\ .
\]
Moreover, if $\varphi$ is fully faithful, so are $\varphi^{\widetilde G}_\flat$ and $\varphi^{\widetilde G}_\sharp$ even after restricted to crossed groups.
\end{theorem}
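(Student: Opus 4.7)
The plan is to reduce the theorem to the three separate claims that $\varphi^\natural_{\widetilde G}$, $\varphi^{\widetilde G}_\sharp$, and $\varphi^{\widetilde G}_\flat$ each send crossed groups to crossed groups; once these are established, the adjunctions descend automatically because the inclusions $\mathbf{CrsGrp}^{/\varphi^\ast\widetilde G}_{\mathcal A} \subset \mathbf{CrsMon}^{/\varphi^\ast\widetilde G}_{\mathcal A}$ and $\mathbf{CrsGrp}^{/\widetilde G}_{\widetilde{\mathcal A}} \subset \mathbf{CrsMon}^{/\widetilde G}_{\widetilde{\mathcal A}}$ are \emph{full} by \cref{prop:crsgrp-ff}, and the fully faithful assertion then follows immediately from \cref{cor:Lanff-crsmon} and \cref{prop:Ranff-crsmon} (restriction of a fully faithful functor to full subcategories at source and target is again fully faithful). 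Throughout, \cref{prop:crsgrp-ff} lets me check ``is a crossed group'' simply by verifying ``is degreewise a group.''

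The case of the pullback $\varphi^\natural_{\widetilde G}$ is immediate, since $(\varphi^\natural_{\widetilde G} M)(a) = M(\varphi(a))$ literally carries the same monoid structure as $M$ at $\varphi(a)$. For the right adjoint $\varphi^{\widetilde G}_\sharp$, I will unwind the construction described after \cref{cor:Ran-crsmon}: the underlying $\widetilde{\mathcal A}$-set of $\varphi^{\widetilde G}_\sharp M$ is the pullback $\widetilde G \times_{\varphi_\ast\varphi^\ast\widetilde G} \varphi_\ast M$, and the lax monoidal structure on $\varphi^{\widetilde G}_\ast$ produces the multiplication degreewise from those on $M$ and $\widetilde G$. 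Since the right Kan extension in $\mathbf{Set}_{\widetilde{\mathcal A}}$ and pullbacks of $\widetilde{\mathcal A}$-sets are both computed pointwise as limits in $\mathbf{Set}$, the degreewise value of $\varphi^{\widetilde G}_\sharp M$ is, as a monoid, a pullback of a limit of groups, which is again a group.

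The main obstacle is the left adjoint $\varphi^{\widetilde G}_\flat$, whose construction in \cref{theo:Lan-crsmon} passes through free monoids and so does not preserve ``degreewise a group'' for formal reasons. I will argue directly from the explicit generators-and-relations presentation: $\varphi^{\widetilde G}_\flat M(\tilde a)$ is the quotient of the free monoid on pairs $[x, \widetilde\varphi]$ by the congruences $(e_b, \widetilde\varphi) \sim e_{\tilde a}$, $(xy, \widetilde\varphi) \sim (x, \widetilde\varphi^y)(y, \widetilde\varphi)$, and $(\theta^\ast(z), \widetilde\varphi) \sim (z, \varphi(\theta)\widetilde\varphi)$. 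Applying the second relation with $y = x$ and comparing to $(x^{-1} x, \widetilde\varphi) \sim e_{\tilde a}$ shows that $[x^{-1}, \widetilde\varphi^x]$ is a left inverse of $[x, \widetilde\varphi]$ whenever $M$ is a crossed group. Since every generator (and hence every word, by reversing factors) admits a left inverse, and since in any monoid a left-invertible element whose left inverse is itself left-invertible is already two-sided invertible, every element of the quotient is invertible. Thus $\varphi^{\widetilde G}_\flat M$ is degreewise a group, and one more application of \cref{prop:crsgrp-ff} completes the verification.
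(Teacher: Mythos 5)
Your overall reduction is the right one: by \cref{prop:crsgrp-ff} the inclusions of crossed groups into crossed monoids are full, so it suffices to show that $\varphi^\natural_{\widetilde G}$, $\varphi^{\widetilde G}_\flat$, and $\varphi^{\widetilde G}_\sharp$ preserve the property of being a degreewise group, and the fully-faithfulness claims then do follow from \cref{prop:Ranff-crsmon} and \cref{cor:Lanff-crsmon}. Your treatment of $\varphi^\natural_{\widetilde G}$ is trivially fine, and your argument for $\varphi^{\widetilde G}_\flat$ is correct and genuinely useful: from the presentation of \cref{theo:Lan-crsmon} the class $[x^{-1},\widetilde\varphi^{\,x}]$ is a (two-sided, in fact) inverse of $[x,\widetilde\varphi]$, every generator of the quotient monoid is invertible, hence so is every element. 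The gap is in the right-adjoint case. Degreewise, $\varphi_\ast M(\tilde a)$ is an end of the sets $M(a)$ indexed by morphisms $u:\varphi(a)\to\tilde a$, but the transition maps of that limit diagram are the presheaf structure maps $\theta^\ast$ of $M$, which for a crossed group are \emph{not} group homomorphisms (only the crossed identity \ref{condCG:cdist} holds); so this is not a limit of groups in any category of groups. Worse, the multiplication that the lax monoidal structure puts on $\varphi^{\widetilde G}_\sharp M(\tilde a)$ is not componentwise: writing an element as a pair $(\tilde g,\xi)$ with $\xi=(\xi_u)_u$, $\xi_u\in M(a)$, $p(\xi_u)=u^\ast(\tilde g)$, the product of $(\tilde g,\xi)$ and $(\tilde h,\eta)$ has components of the form $\zeta_u=\xi_{u^{\tilde h}}\cdot\eta_u$, i.e.\ the index $u$ is reindexed by the action of the $\widetilde G$-component of the second factor. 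Hence ``a pullback of a limit of groups'' is not a valid description of this monoid, and degreewise invertibility does not follow from closure of groups under limits.

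The conclusion you need there is nevertheless true, and the missing argument is exactly parallel to the explicit check you did for $\varphi^{\widetilde G}_\flat$: the inverse of $(\tilde g,\xi)$ is $(\tilde g^{-1},\xi')$ with $\xi'_u=\bigl(\xi_{u^{\tilde g^{-1}}}\bigr)^{-1}$, and the real work is verifying that $\xi'$ is again a natural family lying over $\tilde g^{-1}$; this uses \ref{condCG:gdist}, \ref{condCG:cdist}, the $\widetilde G$-stability and faithfulness of $\varphi$, and the formula of \cref{lem:crsmon-invert} for $\theta^\ast$ applied to an inverse. For comparison, the paper argues differently: it observes that the constructions of $\varphi^{\widetilde G}_\sharp$ and $\varphi^{\widetilde G}_\flat$ are, degreewise, (co)limit constructions and invokes the closure of $\mathbf{CrsGrp}^{/G}_{\mathcal A}\subset\mathbf{CrsMon}^{/G}_{\mathcal A}$ under small limits and colimits (\cref{theo:crsgrp-birefl}). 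Your presentation-based argument for the left adjoint is a sound, more explicit substitute, but as written the right-adjoint half of your proof does not go through and needs the degreewise inverse construction above (or some equivalent argument) in its place.
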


\begin{appendices}
\section{Classification of crossed interval groups}
\label{sec:clsfy-crsint}

In this appendix, we give a complete list of crossed interval subgroups of the terminal crossed interval group $\mathfrak W_\nabla$.
As mentioned at the end of \cref{sec:terminal}, this gives us a classification of crossed interval groups up to \emph{non-crossed} parts (see \cref{prop:crsgrp-clsfy}).
Recall that, according to the computation in \cref{ex:crsgrp-intWeyl}, we have
\begin{equation}
\label{eq:intWeyl-split}
\mathfrak W_\nabla(\double\langle n\double\rangle)
\cong H_n\times C_2\ ,
\end{equation}
where $H_n$ is the $n$-th hyperoctahedral group, and $C_2$ is the group of order $2$.
At first glance, this looks pretty nice; we have an excellent theorem, namely, \emph{Goursat's Lemma} \cite{Goursat1889} to seek subgroups of a product of groups.
Unfortunately, the ``product'' in \eqref{eq:intWeyl-split} is, however, not actually the product of interval sets; namely, the second component $C_2$ is not really closed under the interval set structure so that the projections $H_n\times C_2\to H_n$ fails to define a map of interval sets.
This is because of the morphisms in $\nabla$ outside the image of $\widetilde\Delta$, so we first consider its restriction to $\widetilde\Delta$.
Indeed, let $\mathfrak j:\widetilde\Delta\to\nabla$ be the functor given by
\[
\mathfrak j(\langle n\rangle)
=\double\langle n\double\rangle
\]
(see \cref{ex:Ran-interval}).
Since $\mathfrak j$ is $\mathfrak W_\nabla$-stable faithful functor, in view of \cref{cor:Ran-crsmon}, it induces a functor
\[
\mathfrak j^\natural:\mathbf{CrsGrp}_\nabla\to\mathbf{CrsGrp}^{/\mathfrak j^\ast\mathfrak W_\nabla}_{\widetilde\Delta}\ .
\]
A good news is that \eqref{eq:intWeyl-split} now exhibits $\mathfrak j^\natural\mathfrak W_\nabla$ as a product $\mathfrak W_{\widetilde\Delta}\times C_2$ of augmented simplicial sets, where $C_2$ is the constant augmented simplicial set at $C_2$.
Note that $\mathfrak j^\natural$ is faithful so it preserves monomorphisms.
Hence every crossed interval subgroups of $\mathfrak W_\nabla$ is sent to an augmented crossed simplicial subgroup of $\mathfrak j^\natural\mathfrak W_\nabla\cong\mathfrak W_{\widetilde\Delta}\times C_2$.

To compute all the augmented crossed simplicial subgroups of $\mathfrak W_{\widetilde\Delta}\times C_2$, we establish a crossed analogue of Goursat's Lemma.
This can actually be done for general base categories $\mathcal A$.

\begin{lemma}
\label{lem:crsgrp-coker}
Let $\mathcal A$ be a small category.
Suppose we are given an inclusion $N\hookrightarrow G$ of crossed $\mathcal A$-groups such that $N(a)\subset G(a)$ is a normal subgroup for each $a\in\mathcal A$.
Then, the family $\{G(a)/N(a)\}_a$ admits a unique structure of $\mathcal A$-sets such that the canonical map $G(a)\twoheadrightarrow G(a)/N(a)$ is a map of $\mathcal A$-sets.
\end{lemma}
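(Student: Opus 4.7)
The plan is to define, for each morphism $\varphi\colon a\to b$ in $\mathcal A$, a map
\[
\bar\varphi^\ast\colon G(b)/N(b)\to G(a)/N(a),\qquad [x]\mapsto [\varphi^\ast(x)],
\]
and to verify that this assignment is well-defined, functorial in $\varphi$, and the unique such $\mathcal A$-set structure making each quotient map $G(a)\twoheadrightarrow G(a)/N(a)$ a map of $\mathcal A$-sets.

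The crux is well-definedness of $\bar\varphi^\ast$, which is nontrivial because $\varphi^\ast\colon G(b)\to G(a)$ is generally \emph{not} a group homomorphism, so one cannot simply invoke the first isomorphism theorem. First I would take two representatives $x,x'\in G(b)$ of the same coset, so $x'=xn$ for some $n\in N(b)$; by the normality assumption this can be rewritten as $x'=n'x$ with $n':=xnx^{-1}\in N(b)$. Applying axiom \ref{condCG:cdist} to the product $n'x$ then yields
\[
\varphi^\ast(x')=\varphi^\ast(n'x)=(\varphi^x)^\ast(n')\,\varphi^\ast(x).
\]
Since $N\hookrightarrow G$ is a map of crossed $\mathcal A$-groups, each structure map $(\varphi^x)^\ast\colon G(b)\to G(a)$ carries $N(b)$ into $N(a)$, so the leading factor $(\varphi^x)^\ast(n')$ lies in $N(a)$. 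Hence $\varphi^\ast(x')$ and $\varphi^\ast(x)$ represent the same coset in $G(a)/N(a)$, so $\bar\varphi^\ast$ is well-defined.

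The functoriality identities $\overline{(\psi\varphi)^\ast}=\bar\varphi^\ast\bar\psi^\ast$ and $\overline{\mathrm{id}^\ast}=\mathrm{id}$ now follow immediately from the corresponding equalities in $G$ by composing with the surjective quotient maps. Uniqueness is likewise automatic: any $\mathcal A$-set structure on $\{G(a)/N(a)\}_a$ compatible with the surjections $G(a)\twoheadrightarrow G(a)/N(a)$ must send $[x]$ to $[\varphi^\ast(x)]$.

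The only real obstacle is the well-definedness step; the key trick is to move the ``error term'' $n$ from the right of $x$ to the left (using normality) before applying \ref{condCG:cdist}, since \ref{condCG:cdist} in the form $\varphi^\ast(nx)=(\varphi^x)^\ast(n)\varphi^\ast(x)$ isolates the obstruction as a single factor that is visibly in $N(a)$; attempting the same computation starting from $\varphi^\ast(xn)$ directly leads to the uncontrolled term $(\varphi^n)^\ast(x)$, which is what makes the normality hypothesis essential rather than cosmetic.
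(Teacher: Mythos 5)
Your proof is correct and follows essentially the same route as the paper: the paper likewise reduces to right cosets via normality and applies the identity $\varphi^\ast(ux)=(\varphi^x)^\ast(u)\varphi^\ast(x)$ from \ref{condCG:cdist}, with $(\varphi^x)^\ast(u)\in N$ because $N$ is a crossed subgroup, and settles uniqueness by surjectivity of the quotient maps. Your explicit rewriting $xn=n'x$ is just a slightly more spelled-out version of that same reduction.
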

\begin{proof}
The uniqueness follows from the surjectivity of each map $G(a)\to G(a)/N(a)$.
We show that, for each $\varphi:b\to a\in\mathcal A$, the map
\begin{equation}
\label{eq:prf:crsgrp-coker:def}
\varphi^\ast:G(a)/N(a)\to G(b)/N(b)
\ ;\quad x N(a)\to \varphi^\ast(x) N(b)
\end{equation}
is well-defined.
Note that, since $N(a)$ and $N(b)$ are normal, it is equivalent to see the same statement for right cosets.
For each $x\in G(a)$ and for every $u\in N(a)$, we have
\[
\varphi^\ast(ux)
= (\varphi^x)^\ast(u)\varphi^\ast(x)\ .
\]
Since $N$ is a crossed $\mathcal A$-subgroup of $G$, $(\varphi^x)^\ast(u)\in N(b)$ so we obtain $\varphi^\ast(ux)\in N(b)\varphi^\ast(x)$.
This immediately implies \eqref{eq:prf:crsgrp-coker:def} in fact defines an $\mathcal A$-set structure on the family $\{G(a)/N(a)\}_a$.
The required property is easily verified.
\end{proof}

In what follows, we write $G/N$ the $\mathcal A$-set obtained in \cref{lem:crsgrp-coker}.
Notice that it admits a canonical degreewise group structure induced from $G$.

Similarly to the ordinary Grousat's Lemma, we aim to present subgroups of a product of crossed $\mathcal A$-groups in terms of subgroups of each components.
Here, the term ``product'' is ambiguous; indeed, the \emph{cartesian product} in the category $\mathbf{CrsGrp}_{\mathcal A}$ does not always agree with the product of $\mathcal A$-sets, while the latter does not always produce crossed $\mathcal A$-groups even if made from crossed $\mathcal A$-groups.
For example, our target $\mathfrak W_{\widetilde\Delta}\times C_2$ is not a cartesian product in the category $\mathbf{CrsGrp}_{\widetilde\Delta}$.
Hence, we need to find an appropriate notion to substitute for \emph{products}.
A key observation is that, for a group $G$, to establish an isomorphism $G\cong G^{(1)}\times G^{(2)}$, it suffices to find a pair $(G^{(1)},G^{(2)})$ of subgroups of $G$ such that $G$ is generated by $G^{(1)}\cup G^{(2)}$ and
\[
G^{(1)}\cap G^{(2)}=[G^{(1)},G^{(2)}]=\{e\}\ ,
\]
where the middle is the commutator subgroup.

\begin{definition}
Let $\mathcal A$ be a small category.
A crossed $\mathcal A$-group $G$ is said to be a \emph{virtual product} of crossed $\mathcal A$-subgroups $G^{(1)}$ and $G^{(2)}$ if the following conditions hold:
\begin{enumerate}[label={\rm(\roman*)}]
  \item the map $G^{(1)}\ast G^{(2)}\to G$ induced by the inclusions is an epimorphism in $\mathbf{CrsGrp}_{\mathcal A}$, where $G^{(1)}\ast G^{(2)}$ is the free product (see \cref{prop:crsgrp-colim});
  \item the pullback $G^{(1)}\times_G G^{(2)}$ is trivial; roughly, we often write $G^{(1)}\cap G^{(2)}=\ast$;
  \item for each $a\in\mathcal A$, the commutator subgroup $[G^{(1)}(a),G^{(2)}(a)]\subset G(a)$ is trivial; in other words, elements of $G^{(1)}$ and $G^{(2)}$ commute with each other.
\end{enumerate}
\end{definition}

\begin{lemma}
\label{lem:virtprod-inv}
Let $\mathcal A$ be a small category, and let $G$ be a crossed $\mathcal A$-group which is a virtual product of crossed $\mathcal A$-subgroups $G^{(1)}$ and $G^{(2)}$.
Then, for every morphism $\varphi:a\to b$, and for each element $x_i\in G^{(i)}$ for $i=1,2$, we have
\[
(\varphi^{x_2})^\ast(x_1)=\varphi^\ast(x_1)
\ ,\quad
(\varphi^{x_1})^\ast(x_2)=\varphi^\ast(x_2)\ .
\]
\end{lemma}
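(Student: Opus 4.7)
The plan is to play the two instances of axiom \ref{condCG:cdist} for the commuting product $x_1x_2=x_2x_1$ against one another, and then read off the required equalities from the triviality of $G^{(1)}\cap G^{(2)}$.

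First I would fix $\varphi:a\to b$ and $x_i\in G^{(i)}(b)$ and expand $\varphi^\ast(x_2x_1)$ and $\varphi^\ast(x_1x_2)$ using \ref{condCG:cdist}:
\[
\varphi^\ast(x_2x_1)=(\varphi^{x_1})^\ast(x_2)\cdot\varphi^\ast(x_1),\qquad
\varphi^\ast(x_1x_2)=(\varphi^{x_2})^\ast(x_1)\cdot\varphi^\ast(x_2).
\]
Condition (iii) of a virtual product gives $x_1x_2=x_2x_1$, so the two right-hand sides coincide. Setting $u_1:=\varphi^\ast(x_1)$, $v_1:=(\varphi^{x_2})^\ast(x_1)$, $u_2:=(\varphi^{x_1})^\ast(x_2)$, $v_2:=\varphi^\ast(x_2)$, we obtain
\[
u_2u_1=v_1v_2\quad\text{in }G(a).
\]
Here $u_1,v_1\in G^{(1)}(a)$ and $u_2,v_2\in G^{(2)}(a)$ because each $G^{(i)}$ is a crossed $\mathcal A$-subgroup of $G$, hence closed under $\varphi^\ast$ and under the twisted actions $\varphi^{x_j}$.

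Next I would use (iii) again, now at the level $a$, to commute $u_1$ and $u_2$, yielding $u_1u_2=v_1v_2$ and thus $v_1^{-1}u_1=v_2u_2^{-1}$. The left-hand side lies in $G^{(1)}(a)$ and the right-hand side lies in $G^{(2)}(a)$, so both belong to $G^{(1)}(a)\cap G^{(2)}(a)$. By condition (ii) of a virtual product, together with \cref{prop:crsgrp-limconn} which ensures the pullback $G^{(1)}\times_G G^{(2)}$ is computed degreewise, this intersection is trivial. Hence $v_1^{-1}u_1=e=v_2u_2^{-1}$, which gives simultaneously $\varphi^\ast(x_1)=(\varphi^{x_2})^\ast(x_1)$ and $(\varphi^{x_1})^\ast(x_2)=\varphi^\ast(x_2)$, as required.

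There is essentially no obstacle here: the argument is entirely formal once one recognises that the two expansions of $\varphi^\ast(x_1x_2)$ measure precisely the failure of the two desired equalities, and that the obstruction lies in the degreewise intersection. The only point worth being careful about is to justify $v_1^{-1}u_1$ and $v_2u_2^{-1}$ lying in the respective subgroups $G^{(i)}(a)$, which is immediate from the fact that $G^{(1)}$ and $G^{(2)}$ are crossed $\mathcal A$-subgroups and in particular degreewise subgroups.
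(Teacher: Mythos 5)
Your proposal is correct and follows essentially the same route as the paper's own proof: expand $\varphi^\ast(x_1x_2)$ and $\varphi^\ast(x_2x_1)$ via \ref{condCG:cdist}, use the triviality of the commutators (at both levels) to equate and rearrange, and conclude from the triviality of the degreewise intersection $G^{(1)}\cap G^{(2)}$. The only cosmetic difference is that you invoke \cref{prop:crsgrp-limconn} to justify reading condition (ii) degreewise, which the paper leaves implicit.
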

\begin{proof}
The condition on crossed groups implies
\[
\varphi^\ast(x_1 x_2) = (\varphi^{x_2})^\ast(x_1)\varphi^\ast(x_2)
\ ,\quad
\varphi^\ast(x_2 x_1) = (\varphi^{x_1})^\ast(x_2)\varphi^\ast(x_1)\ .
\]
Since the commutator subgroups $[G^{(1)}(a),G^{(2)}(a)]\subset G(a)$ and $[G^{(1)}(b),G^{(2)}(b)]\subset G(b)$ are trivial, the elements above equal, and we obtain
\[
\varphi^\ast(x_1)^{-1}(\varphi^{x_2})^\ast(x_1)
= (\varphi^{x_1})^\ast(x_2)\varphi^\ast(x_2)^{-1}\ .
\]
The left hand side belongs to $G^{(1)}(b)$ while the right to $G^{(2)}(b)$, so both belong to $G^{(1)}(b)\cap G^{(2)}(b)=\{e\}$.
Thus, the result follows.
\end{proof}

\begin{theorem}[Goursat's Lemma for crossed groups]
\label{theo:Goursat}
Let $\mathcal A$ be a small category.
Suppose $G$ is a crossed $\mathcal A$-group which is a virtual product of crossed $\mathcal A$-subgroups $G^{(1)},G^{(2)}\subset G$.
Then, there is a $1$-$1$ correspondence between the following data:
\begin{enumerate}[label={\rm(\alph*)}]
  \item a crossed $\mathcal A$-subgroup $H$ of $G$;
  \item\label{data:Goursat:quintuple} a quintuple $(\widetilde H^{(1)},H^{(1)};\widetilde H^{(2)},H^{(2)};\chi)$ of
\begin{enumerate}[label={\rm(\roman*)}]
  \item crossed $\mathcal A$-subgroups $H^{(i)}\subset\widetilde H^{(i)}\subset G^{(i)}$ for $i=1,2$ so that $H^{(i)}(a)$ is a normal subgroup in $\widetilde H^{(i)}(a)$ for each $a\in\mathcal A$;
  \item a map $\chi:\widetilde H^{(1)}/H^{(1)}\to\widetilde H^{(2)}/H^{(2)}$ of $\mathcal A$-sets which is a degreewise group isomorphism.
\end{enumerate}
\end{enumerate}
\end{theorem}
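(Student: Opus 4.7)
The plan is to reduce the statement to a degreewise version of the classical Goursat Lemma, after first establishing that the virtual product hypothesis identifies $G(a)$ with a \emph{group-theoretic} direct product $G^{(1)}(a)\times G^{(2)}(a)$ for each $a\in\mathcal{A}$. The main technical issue is then to check, throughout the correspondence, compatibility with the $\mathcal{A}$-set structure, using \cref{lem:virtprod-inv} as a substitute for the commutativity that makes the classical proof work.

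First I would prove the following decomposition: the multiplication map $m_a\colon G^{(1)}(a)\times G^{(2)}(a)\to G(a)$, $(x_1,x_2)\mapsto x_1x_2$, is a group isomorphism. Since $G^{(1)}(a)$ and $G^{(2)}(a)$ commute in $G(a)$, $m_a$ is a group homomorphism; trivial intersection $G^{(1)}\cap G^{(2)}=\ast$ gives injectivity; and surjectivity follows because the epimorphism $G^{(1)}\ast G^{(2)}\twoheadrightarrow G$ in $\mathbf{CrsGrp}_{\mathcal A}$ is degreewise surjective (the forgetful functor $\mathbf{CrsGrp}_{\mathcal A}\to\mathbf{Grp}_{\mathcal A_0}$ preserves colimits by \cref{prop:crsgrp-colim}, hence epimorphisms, and epimorphisms of groups are surjective). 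Using \cref{lem:virtprod-inv}, one has $\varphi^\ast(x_1x_2)=\varphi^\ast(x_1)\varphi^\ast(x_2)$ for $x_i\in G^{(i)}$, so the projections $\pi_i\colon G\to G^{(i)}$ are maps of $\mathcal A$-sets and of degreewise groups (though not of crossed groups in general, as they forget action data).

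Second I would construct the forward assignment $H\mapsto(\widetilde H^{(1)},H^{(1)},\widetilde H^{(2)},H^{(2)},\chi)$: set $\widetilde H^{(i)}:=\pi_i(H)$ and $H^{(i)}:=H\cap G^{(i)}$ degreewisely. Both are crossed $\mathcal A$-subgroups because the $\pi_i$ respect $\varphi^\ast$ by the previous step and pullbacks of crossed subgroups are crossed subgroups by \cref{prop:crsgrp-limconn}; degreewise normality of $H^{(i)}$ in $\widetilde H^{(i)}$ is the classical argument. Define $\chi_a\colon \widetilde H^{(1)}(a)/H^{(1)}(a)\xrightarrow\sim\widetilde H^{(2)}(a)/H^{(2)}(a)$ in the standard way: for $x_1\in\widetilde H^{(1)}(a)$, pick $x_2\in\widetilde H^{(2)}(a)$ with $x_1x_2\in H(a)$, and set $\chi_a(x_1H^{(1)}(a)):=x_2H^{(2)}(a)$. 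Naturality of $\chi$ in $a$ is where \cref{lem:crsgrp-coker} enters: for $\varphi\colon b\to a$, the identity $\varphi^\ast(x_1x_2)=\varphi^\ast(x_1)\varphi^\ast(x_2)\in H(b)$ shows $\chi_b(\varphi^\ast(x_1)H^{(1)}(b))=\varphi^\ast(x_2)H^{(2)}(b)$, which is exactly $\varphi^\ast\circ\chi_a$ under the quotient $\mathcal A$-set structure.

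Conversely, given a quintuple, put
\[
H(a):=\left\{x_1x_2\in G(a)\ \middle|\ x_i\in\widetilde H^{(i)}(a),\ \chi_a(x_1H^{(1)}(a))=x_2H^{(2)}(a)\right\}\ .
\]
Closure under multiplication and inversion in $G(a)$ follows from the commutativity $x_1x_2y_1y_2=x_1y_1x_2y_2$ together with the fact that $\chi_a$ is a group isomorphism, exactly as in the classical Goursat proof. Closure under $\varphi^\ast$ is again immediate from \cref{lem:virtprod-inv} and the naturality of $\chi$. One then checks routinely that the two assignments are mutually inverse, which is the standard translation of the classical argument. The principal obstacle will be carrying out these naturality checks cleanly; everything rests on the interplay between the decomposition of $G$ into the direct product of $G^{(1)}$ and $G^{(2)}$ in each degree, \cref{lem:virtprod-inv} (which makes $\varphi^\ast$ act ``coordinatewise'' on products), and \cref{lem:crsgrp-coker} (which equips $\widetilde H^{(i)}/H^{(i)}$ with the $\mathcal A$-set structure needed to make sense of $\chi$ as a morphism of $\mathcal A$-sets). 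Once these ingredients are in place, the proof proceeds in parallel with the classical one level by level.
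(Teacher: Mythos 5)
Your proposal is correct and follows essentially the same route as the paper's proof: the same two constructions (from $H$ the subgroups $\widetilde H^{(i)}$, $H^{(i)}=H\cap G^{(i)}$ and the induced $\chi$; from a quintuple the subgroup $H^Q(a)=\{x_1x_2\mid \chi(x_1H^{(1)}(a))=x_2H^{(2)}(a)\}$), with \cref{lem:virtprod-inv} supplying the coordinatewise behaviour of $\varphi^\ast$, \cref{lem:crsgrp-coker} supplying the $\mathcal A$-set structure on the quotients, and the mutual-inverse check reduced to the classical Goursat Lemma applied degreewise. The only difference is that you spell out the degreewise identification $G(a)\cong G^{(1)}(a)\times G^{(2)}(a)$ (via epimorphisms of crossed groups being degreewise surjective), a point the paper leaves implicit when it invokes the classical lemma.
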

\begin{proof}
We denote by $\operatorname{Sub}(G)$ the set of crossed $\mathcal A$-subgroups of $G$ and by $\operatorname{Gou}(G^{(1)},G^{(2)})$ the set of quintuples as in \ref{data:Goursat:quintuple}.
For $Q=(\widetilde H^{(1)},H^{(1)};\widetilde H^{(2)},H^{(2)};\chi)\in\operatorname{Gou}(G^{(1)},G^{(2)})$, consider the subset
\[
H^Q(a)
:=\left\{x_1x_2\ \middle|\ x_1\in\widetilde H^{(1)}(a),\ x_2\in\widetilde H^{(2)}(a),\ \chi\left(x_1 H^{(1)}(a)\right)= x_2 H^{(2)}(a)\right\}
\]
of $G(a)$ for each $a\in\mathcal A$.
We see the family $H^Q=\{H^Q(a)\}_a$ forms a crossed $\mathcal A$-subgroup of $G$.
Since $[G^{(1)}(a),G^{(2)}(a)]=\ast$ and $\chi$ is a degreewise group isomorphism, $H^Q$ is a degreewise subgroup of $G$.
On the other hand, for $\varphi:b\to a\in\mathcal A$, \cref{lem:virtprod-inv} implies that, for each $x_1x_2\in H^Q(a)$ with $x_i\in\widetilde H^{(i)}(a)$ and $\chi\left(x_1H^{(1)}(a)\right)=x_2 H^{(2)}(a)$,
\[
\varphi^\ast(x_1x_2)
= (\varphi^{x_2})^\ast(x_1)\varphi^\ast(x_2)
= \varphi^\ast(x_1)\varphi^\ast(x_2)\ .
\]
Since $\chi$ is an $\mathcal A$-map, we have
\[
\chi\left(\varphi^\ast(x_1) H^{(1)}(b)\right)
= \varphi^\ast\chi\left(x_1 H^{(1)}(a)\right)
= \varphi^\ast(x_2)H^{(2)}(b)
\]
so $\varphi^\ast(x_1x_2)\in H^Q(b)$.
Hence, $H^Q\subset G$ is closed under both the degreewise group structure and the $\mathcal A$-set structure so to define a crossed $\mathcal A$-subgroup.

Now, consider the map
\begin{equation}
\label{eq:prf:Goursat:corr}
\operatorname{Gou}(G^{(1)},G^{(2)})\to\operatorname{Sub}(G)
\ ;\quad Q\mapsto H^Q\ .
\end{equation}
We show it admits an inverse.
Suppose $H\subset G$ is a crossed $\mathcal A$-subgroup.
For $\{i,j\}=\{1,2\}$, and for $a\in\mathcal A$, we define
\[
\begin{gathered}
\widetilde H^{(i)}(a)
:=\left\{x_i\in G^{(i)}(a)\ \middle|\ \exists x_j\in G^{(j)}(a):x_ix_j\in H(a)\right\}
\\
H^{(i)}(a)=G^{(i)}(a)\cap H(a)\ .
\end{gathered}
\]
Note that the family $\widetilde H^{(i)}=\{\widetilde H^{(i)}(a)\}_a$ forms a crossed $\mathcal A$-subgroup of $G^{(i)}$ as well as $H^{(i)}=\{H^{(i)}(a)\}_a$.
Indeed, if $(x_1,x_2),(x'_1,x'_2)\in G^{(1)}(a)\times G^{(2)}(a)$ are pairs with $x_1x_2,x'_1x'_2\in H(a)$, and if $\varphi:b\to a\in\mathcal A$, then by \cref{lem:virtprod-inv},
\begin{equation}
\label{eq:prf:Goursat:A-hom}
\begin{gathered}
(x_1x'_1)(x_2x'_2)
= (x_1x_2)(x'_1x'_2)
\in H(a)\ ,
\\
\varphi^\ast(x_1)\varphi^\ast(x_2)
= \varphi^\ast(x_1x_2)
\in H(b)\ .
\end{gathered}
\end{equation}
On the other hand, $H^{(i)}(a)$ is clearly a normal subgroup of $H(a)$.
Moreover, for $x_i\in\widetilde H^{(i)}(a)$, take an element $x_j\in G^{(j)}(a)$ with $x_ix_j\in H(a)$, then
\[
x_i H^{(i)}(a)
= x_i x_j H^{(i)}(a) x_j^{-1}
= (x_ix_j H^{(i)}(a) (x_ix_j)^{-1}) x_i
= H^{(i)}(a) x_i\ ,
\]
which implies $H^{(i)}(a)$ is a normal subgroup of $\widetilde H^{(i)}(a)$.
Hence, by \cref{lem:crsgrp-coker}, we obtain two $\mathcal A$-sets $\widetilde H^{(1)}/H^{(1)}$ and $\widetilde H^{(2)}/H^{(2)}$ that are degreewise groups.
Define a map $\chi^H_a:(\widetilde H^{(1)}/H^{(1)})(a)\to(\widetilde H^{(2)}/H^{(2)})(a)$ so that
\[
\chi^H_a\left(x_1 H^{(1)}(a)\right)=x_2 H^{(2)}(a)
\]
if and only if $x_1x_2\in H(a)$.
It is easily verified that such a map $\chi_a$ is uniquely determined by the crossed $\mathcal A$-subgroup $H\subset G$.
Furthermore, the formulae \eqref{eq:prf:Goursat:A-hom} implies that $\chi^H=\{\chi^H_a\}_a$ defines an $\mathcal A$-map $\widetilde H^{(1)}/H^{(1)}\to\widetilde H^{(2)}/H^{(2)}$ that is a degreewise group isomorphism.
We write
\[
Q^H:=(\widetilde H^{(1)},H^{(1)};\widetilde H^{(2)},H^{(2)};\chi^H)
\]
the resulting quintuple.
Then, clearly $Q^H\in\operatorname{Gou}(G^{(1)},G^{(2)})$, and the classical Goursat's Lemma for groups shows that the assignment $H\mapsto Q^H$ gives the inverse of the map \eqref{eq:prf:Goursat:corr}.
\end{proof}

In the rest, we compute all the crossed interval subgroups of $\mathfrak W_\nabla$.
To begin with, we focus on the crossed interval subgroup $\mathfrak H\subset\mathfrak W_\nabla$ of hyperoctahedral groups whose structure is given in \cref{ex:crsgrp-intHypoct}.
Since we have $\mathfrak j^\ast\mathfrak H\cong\mathfrak W_{\widetilde\Delta}$, crossed interval subgroups of $\mathfrak H$ are augmented crossed simplicial subgroups of $\mathcal W_{\widetilde\Delta}$ closed under the structure of interval sets.
As a result of \cite{FL91}, we have a complete list of crossed simplicial subgroups of $\mathfrak W_\Delta$ as in \cref{tb:simpcrs-clsfy} in \cref{ex:simpcrs-clsfy}.
In view of \cref{ex:Ran-aug}, to obtain a complete list of augmented crossed simplicial subgroups of $\mathfrak W_{\widetilde\Delta}$, we only have to shift the list in simplicial case by $1$.
Hence, the result is indicated in \cref{tb:augsimp-clsfy}:
\begin{table}[htbp]
\centering
\begin{tabular}{c|c|c}
  \textbf{name} & \textbf{symbol} & \textbf{group at $\langle n\rangle$} \\\hline
  Trivial & $*$ & $1$ \\
  Reflexive & $C_2$ & $C_2$ \\
  Cyclic & $\Lambda$ & $C_n$ \\
  Dihedral & $\mathfrak D$ & $D_n$ \\
  Symmetric & $\mathfrak S$ & $\mathfrak S_n$ \\
  Reflexosymmetric & $\widetilde{\mathfrak S}$ & $\mathfrak S_n\times C_2$ \\
  Weyl (Hyperoctahedral) & $\mathfrak W_{\widetilde\Delta}\cong\mathfrak H$ & $H_n$
\end{tabular}
\caption{The augmented crossed simplicial subgroups of $\mathfrak W_{\widetilde\Delta}$}
\label{tb:augsimp-clsfy}
\end{table}
It is seen that, among \cref{tb:augsimp-clsfy}, crossed \emph{interval} subgroups are precisely the trivial one $\ast$, the symmetric one $\mathfrak S$, and the hyperoctahedral one $\mathfrak H$ itself.

Now suppose $H\subset\mathfrak W_\nabla$ is a crossed interval subgroup.
Since the augmented crossed simplicial group $\mathfrak j^\ast\mathfrak W_\nabla=\mathfrak W_{\widetilde\Delta}\times C_2$ is, as suggested by the notation, a virtual product of $\mathfrak W_{\widetilde\Delta}\cong\mathfrak H$ and $C_2$, \cref{theo:Goursat} implies there is an associated quintuple $(\widetilde H^{(1)},H^{(1)};\widetilde H^{(2)},H^{(2)};\chi^H)$ of augmented crossed simplicial subgroups $H^{(1)}\subset\widetilde H^{(1)}\subset\mathfrak H$, $H^{(2)}\subset\widetilde H^{(2)}\subset C_2$, and $\chi:\widetilde H^{(1)}/H^{(1)}\cong\widetilde H^{(2)}/H^{(2)}$.
In particular, according to the proof of \cref{theo:Goursat}, $H^{(1)}=H\cap\mathfrak H$ is an intersection of crossed interval subgroups of $\mathfrak W_\nabla$, so $H^{(1)}$ is a crossed interval subgroup of $\mathfrak H$, which is either $\ast$, $\mathfrak S$, or $\mathfrak H$ by the argument above.
On the other hand, since $\widetilde H^{(2)}/H^{(2)}$ is a subquotient of the group $C_2$, it is, degreewisely, of order at most $2$.
Thus, the isomorphism $\chi^H$ is, if exists, uniquely determined by the other data $(\widetilde H^{(1)},H^{(1)};\widetilde H^{(2)},H^{(2)})$, so we can omit it in what follows.
As a result, all the possibilities of the quadruple are listed below:
\[
\begin{multlined}
(\ast,\ast;\ast,\ast)\ ,
(\mathfrak S,\mathfrak S;\ast,\ast)\ ,
(\mathfrak H,\mathfrak H;\ast,\ast)\ ,
(\ast,C_2;\ast,C_2)\ ,\\
(\mathfrak S,\widetilde{\mathfrak S};\ast,C_2)\ ,
(\ast,\ast;C_2,C_2)\ ,
(\mathfrak S,\mathfrak S;C_2,C_2)\ ,
(\mathfrak H,\mathfrak H;C_2,C_2)\ .
\end{multlined}
\]
It turns out that the sixth and seventh do not produce crossed interval subgroups while the others do.
Hence, we finally obtain the list of crossed interval subgroups of $\mathfrak W_\nabla$ (\cref{tb:crsint-clsfy}).
\begin{table}[htbp]
\centering
\begin{tabular}{c|c|c|c}
  \textbf{name} & \textbf{symbol} & \textbf{group at $\double\langle n\double\rangle$} & $\substack{\textbf{associated}\cr\textbf{quadruple}}$ \\\hline
  Trivial & $*$ & $1$ & $(\ast,\ast;\ast,\ast)$ \\
  Reflexive & $C_2$ & $C_2$ & $(\ast,C_2;\ast,C_2)$ \\
  Symmetric & $\mathfrak S$ & $\mathfrak S_n$ & $(\mathfrak S,\mathfrak S;\ast,\ast)$ \\
  Reflexosymmetric & $\widetilde{\mathfrak S}$ & $\mathfrak S_n\times C_2$ & $(\mathfrak S,\widetilde{\mathfrak S};\ast,C_2)$ \\
  Hyperoctahedral & $\mathfrak H$ & $H_n$ & $(\mathfrak H,\mathfrak H;\ast,\ast)$ \\
  Weyl & $\mathfrak W_\nabla$ & $H_n\times C_2$ & $(\mathfrak H,\mathfrak H;C_2,C_2)$
\end{tabular}
\caption{The crossed interval subgroups of $\mathfrak W_\nabla$}
\label{tb:crsint-clsfy}
\end{table}
\end{appendices}

\bibliographystyle{plain}
\bibliography{mybiblio}

\begin{thebibliography}{10}

\bibitem{AdamekRosicky1994}
J.~Ad{\'a}mek and J.~Rosick{\'y}.
\newblock {\em Locally Presentable and Accessible Categories}, volume 189 of
  {\em London Mathematical Society Lecture Note Series}.
\newblock Cambridge University Press, 1994.

\bibitem{BataninMarkl2014}
M.~Batanin and M.~Markl.
\newblock Crossed interval groups and operations on the hochschild cohomology.
\newblock {\em Journal of Noncommutative Geometry}, pages 655--693, 2014.

\bibitem{Bor94}
F.~Borceux.
\newblock {\em Handbook of Categorical Algebra 2}.
\newblock Number~51 in Encyclopedia of Mathematics and its Applications.
  Cambridge University Press, 1994.

\bibitem{Connes83}
A.~Connes.
\newblock Cohomologie cyclique et foncteurs {$\operatorname{Ext}^n$}.
\newblock {\em Comptes Rendus des S\'eances de l'Acad\'emie des Sciences.
  S\'erie I. Math\'ematique}, 296(23):953--958, June 1983.

\bibitem{Dold1958}
A.~Dold.
\newblock Homology of symmetric products and other functors of complexes.
\newblock {\em Annals of Mathematics}, 68(1):54--80, 1958.

\bibitem{Dubuc1968}
E.~J. Dubuc.
\newblock {\em Adjoint triangles}, volume~61 of {\em Lecture Notes in
  Mathematics}, pages 69--91.
\newblock Springer-Verlag, Berlin, Heidelberg, 1968.

\bibitem{DyckerhoffKapranov2014}
T.~Dyckerhoff and M.~Kapranov.
\newblock Crossed simplicial groups and structured surfaces.
\newblock arXiv:1403.5799, 2014.

\bibitem{DyckerhoffKapranov2015}
T.~Dyckerhoff and M.~Kapranov.
\newblock Crossed simplicial groups and structured surfaces.
\newblock In T.~Pantev, C.~Simpson, B.~To{\"e}n, M.~Vaqui{\'e}, and G.~Vezzosi,
  editors, {\em Stacks and Categories in Geometry, Topology, and Algebra},
  volume 643 of {\em Contemporary Mathematics}, pages 37--110. American
  Mathematical Society, 2015.

\bibitem{DyckhoffTholen1987}
R.~Dyckhoff and W.~Tholen.
\newblock Exponentiable morphisms, partial products and pullback complements.
\newblock {\em Journal of Pure and Applied Algebra}, 49(1):103--116, 1987.

\bibitem{FL91}
Z.~Fiedorowicz and J-L Loday.
\newblock Crossed simplicial groups and their associated homology.
\newblock {\em Transactions of the American Mathematical Society},
  326(1):57--87, 1991.

\bibitem{Goursat1889}
E.~Goursat.
\newblock Sur les substitutions orthogonales et les divisions r\'eguli\`eres de
  l'espace.
\newblock {\em Annales scientifiques de l'\'Ecole Normale Sup\'erieure},
  6:9--102, 1889.

\bibitem{Joyal97}
A.~Joyal.
\newblock Disks, duality and $\theta$-categories.
\newblock available at https://ncatlab.org/nlab/files/JoyalThetaCategories.pdf.

\bibitem{Kel05}
G.~M. Kelly.
\newblock Basic concepts of enriched category theory.
\newblock {\em Reprints in Theory and Applications of Categories}, 10:1--136,
  2005.
\newblock Reprint of the 1982 original [Cambridge Univ. Press, Cambridge;
  MR0651714].

\bibitem{KellyLawvere1989}
G.~M. Kelly and F.~W. Lawvere.
\newblock On the complete lattice of essential localizations.
\newblock {\em Bulletin de la Soci\'et\'e Math\'ematique de Belgique, S\'erie
  {A}}, 41(2):289--319, 1989.

\bibitem{Kra87}
R.~Krasauskas.
\newblock Skew-simplicial groups.
\newblock {\em Lithuanian Mathematical Journal}, 27(1):47--54, 1987.
\newblock Translated from Litovski\u\i{} Matematichenski\u\i{} Sbornik
  (Lietuvos Matematikos Rinkinys), 27(1):89--99.

\bibitem{Lur14}
J.~Lurie.
\newblock Higher algebra.
\newblock see Lurie's website http://www.math.harvard.edu/\~{}lurie/, September
  2014.

\bibitem{MakkaiPitts1987}
M.~Makkai and A.~M. Pitts.
\newblock Some results on locally finitely presentable categories.
\newblock {\em Transactions of the American Mathematical Society},
  299(2):473--496, 1987.

\bibitem{MayThomason1978}
J.~P. May and R.~Thomason.
\newblock The uniqueness of infinite loop space machines.
\newblock {\em Topology}, 17(3):205--224, 1978.

\bibitem{Porst2008}
H-E. Porst.
\newblock On categories of monoids, comonoids, and bimonoids.
\newblock {\em Quaestiones Mathematicae}, 31(2):127--139, 2008.

\bibitem{StreetVerity2010}
R.~Street and D.~Verity.
\newblock The comprehensive factorization and torsors.
\newblock {\em Theory and Applications of Categories}, 23(3):42--76, 2010.

\end{thebibliography}
\end{document}